\pgfplotsset{compat=1.6}
\pgfplotsset{soldot/.style={color=blue,only marks,mark=*}} \pgfplotsset{holdot/.style={color=blue,fill=white,only marks,mark=*}}
\pgfplotsset{soldotred/.style={color=red,only marks,mark=*}} \pgfplotsset{holdotred/.style={color=red,fill=white,only marks,mark=*}}
\newcounter{dummy}
\numberwithin{dummy}{section}
\newtheorem{theorem}[dummy]{Theorem}
\newtheorem{assumption}[dummy]{Assumption}
\newtheorem{corollary}[dummy]{Corollary}
\newtheorem{definition}[dummy]{Definition}
\newtheorem{lemma}[dummy]{Lemma}
\newtheorem{proposition}[dummy]{Proposition}
\theoremstyle{remark}
\newtheorem{remark}[dummy]{Remark}
\theoremstyle{definition}
\newtheorem{example}[dummy]{Example}
\newcommand{\Lip}{\textnormal{Lip}}
\newcommand{\calP}{\mathcal{P}}
\newcommand{\DD}{\mathscr{D}}
\newcommand{\D}{\mathcal{D}}
\newcommand{\FF}{\mathcal{F}}
\newcommand{\R}{\mathbb R}
\newcommand{\Z}{\mathbb Z}
\newcommand{\X}{\mathbf X}
\newcommand{\Y}{\mathbf Y}
\newcommand{\x}{\mathbf x}
\newcommand{\y}{\mathbf y}
\newcommand{\ZZ}{\mathbf Z}
\newcommand{\Nbf}{\mathbf N}
\newcommand{\M}{\mathbf M}
\newcommand{\var}[1]{#1\textnormal{-var}}
\newcommand{\pvar}{{p\textnormal{-var}}}
\newcommand{\pprimevar}{p'\textnormal{-var}}
\newcommand{\qvar}{q\textnormal{-var}}
\newcommand{\onevar}{1\textnormal{-var}}
\newcommand{\oneHol}{1\textnormal{-H{\"o}l}}
\newcommand{\EEE}[1]{\mathbb E \left[ #1 \right]}
\newcommand{\PP}{\mathbb{P}}
\newcommand{\E}{\mathbb{E}}
\newcommand{\Diff}{\textnormal{Diff}}
\newcommand{\PPP}[1]{\mathbb{P} \left[ #1 \right]}
\newcommand{\id}{\textnormal{id}}
\newcommand{\norm}[1]{\left\|#1 \right\|}
\newcommand{\norms}[1]{\left|#1 \right|}
\newcommand{\normc}{\norm{\cdot}}
\newcommand{\1}[1]{\mathbf 1_{\{#1\}}}
\newcommand{\LLL}{\mathbf L}
\newcommand{\plus}{\mathbf{plus}}
\newcommand{\g}{\mathfrak g}
\newcommand{\floor}[1]{\lfloor #1 \rfloor}
\newcommand{\midset}{\,\middle|\,}
\begin{document}

\begin{frontmatter}
\title{Canonical RDEs and general semimartingales as rough paths} 
\runtitle{Canonical RDEs and semimartingales}

\begin{aug}
\author{\fnms{Ilya} \snm{Chevyrev}\thanksref{t1,m1}
\ead[label=e1]{chevyrev@maths.ox.ac.uk}}
\and
\author{\fnms{Peter K.} \snm{Friz}\thanksref{t2,m2,m3}
\ead[label=e2]{friz@math.tu-berlin.de}}

\thankstext{t1}{Affiliated to TU Berlin  and supported by DFG research unit FOR2402 when this project was commenced. Currently supported by a Junior Research Fellowship of St John's College, Oxford.}
\thankstext{t2}{Partially supported by the European Research Council through CoG-683166 and DFG research unit FOR2402}
\runauthor{I. Chevyrev and P. K. Friz}

\affiliation{University of Oxford\thanksmark{m1}, TU Berlin\thanksmark{m2} and WIAS\thanksmark{m3}}

\address{I. Chevyrev\\
Mathematical Institute\\
University of Oxford\\
Andrew Wiles Building\\
Radcliffe Observatory Quarter\\
Woodstock Road\\
Oxford OX2 6GG\\
United Kingdom\\
\printead{e1}}

\address{P. K. Friz\\
Institut f\"ur Mathematik\\
Technische Universit\"at Berlin\\
Strasse des 17. Juni 136\\
10623 Berlin\\
Germany\\
and\\
Weierstra\ss --Institut f\"ur Angewandte\\
\quad Analysis und Stochastik\\
Mohrenstrasse 39\\
10117 Berlin\\
Germany\\
\printead{e2}}


\end{aug}

\begin{abstract}
In the spirit of Marcus canonical stochastic differential equations, we study a similar notion of rough differential equations (RDEs), notably dropping the assumption of continuity prevalent in the rough path literature. A new metric is exhibited in which the solution map is a continuous function of the driving rough path {\it and} a so-called path function, which directly models the effect of the jump on the system.  In a second part, we show that general multidimensional semimartingales admit canonically defined rough path lifts. An extension of L{\'e}pingle's BDG inequality to this setting is given, and in turn leads to a number of novel limit theorems for semimartingale driven differential equations, both in law and in probability, conveniently phrased via Kurtz--Protter's uniformly-controlled-variations (UCV) condition. A number of examples illustrate the scope of our results.
\end{abstract}

\begin{keyword}[class=MSC]
\kwd[Primary ]{60H99}
\kwd[; secondary ]{60H10}
\end{keyword}

\begin{keyword}
\kwd{C{\`a}dl{\`a}g rough paths}
\kwd{stochastic and rough differential equations with jumps}
\kwd{Marcus canonical equations}
\kwd{general semimartingales}
\kwd{limit theorems}
\end{keyword}

\end{frontmatter}

\section{Introduction}

It{\^o} stochastic integrals are well-known to violate a first order chain rule of Newton--Leibniz type, as is manifest from It{\^o}'s formula. In a number of applications, is is important
to have a chain rule which,
in the context of {\it continuous} semimartingales, was achieved in a satisfactory way by Stratonovich stochastic integration, which - loosely speaking - replaces left-point evaluation (in It{\^o}--Riemann sums) by a symmetric mid-point evaluation. In the case of stochastic integration against {\it general} semimartingales (L{\'e}vy processes as an important special case), one can check that the Stratonovich integral no longer gives a chain rule -- a more sophisticated approach is necessary to take care of jumps and the mechanism for doing this was developed by Marcus~\cite{Mar78, Mar81}. The resulting ``Marcus canonical integration'' and ``Marcus canonical (stochastic differential) equations'' (in the terminology of~\cite{Applebaum09}) was then investigated in a number of works, including~\cite{FK85, K90, F91, AT01, AK93, KPP95, K96, FK99}, see also~\cite{Applebaum09,CP14} and the references therein.

\smallskip

On the other hand, continuous  stochastic integration has been understood for some time in the context of rough path theory, see~\cite{Lyons98,CL05,FrizVictoir08},~\cite[Ch.~14]{FrizVictoir10}. Loosely speaking, given a multidimensional continuous semimartingale $X$, the Stratonovich integral $\int f(X) \circ dX$ can be given a robust (pathwise) meaning in terms of $\X = (X, \int X \otimes \circ dX)$, a.e. realization of which constitutes a geometric rough path of finite $p$-variation for any $p>2$. In contrast to the popular class of H\"older rough paths (usually sufficient to deal with Brownian motion, see e.g.~\cite{FrizHairer14}), $p$-variation has the advantage that it immediately allows for jumps. This also prompts the remark that Young theory, somewhat the origin of Lyons' rough paths, by no means requires continuity. Extensions of rough path theory to a general $p$-variation setting (for possibly discontinuous paths) were then
explored in~\cite{Williams01},~\cite{FrizShekhar17} and~\cite{Chevyrev15}. However, none of these works provided a proper extension of Lyons' main result in rough path analysis: continuity of the solution map as a function of the driving rough path.

\smallskip

The first contribution of this paper is exactly that. We introduce a new metric on the space of c{\`a}dl{\`a}g rough paths, and a type of (Marcus) canonical rough differential equation, for which one has the desired stability result. (Experts in {\it la th{\'e}orie g{\'e}n{\'e}rale des processus} will recognize our topology as a $p$-variation rough paths variant of Skorokhod's strong $M_1$ topology.) In fact, we reserve the prefix ``Marcus'' to situations in which jumps only arise in the $d$-dimensional driving signal, and are handled (in the spirit of Marcus) by connecting $X_{t-}$ and $X_t$ by a straight line. (As a straight line has no area, this creates no jump in the area.) A ``general'' rough path (level $N$, over $\R^d$), however, can have jumps of arbitrary value $\X_{t-}^{-1} \otimes \X_t \in G^N (\R^d)$, and there are many (different) ways to implement Marcus's idea of continuously (parametrized over a fictitious time interval) connecting $\X_{t-}$ and $\X_t$. This is really a modelling choice, no different than choosing the driving signal and/or the driving vector fields. The notion of a ``path function'' $\phi$ helps us to formalize this, and indeed one may view $(\X,\phi)$ as the correct/extended rough driver.

\smallskip

In the second part of the paper, we show how general (c{\`a}dl{\`a}g) semimartingales fit into the theory. In particular, we show that the canoncial lift of a semimartingale indeed is a.s. a (geometric) rough path of finite $p$-variation for any $p>2$ (several special cases, including L{\'e}vy processes, were discussed in \cite{Williams01, FrizShekhar17} but the general case remained open). Our result is further made quantitative by establishing a BDG inequality for general local martingale rough paths. (We thus extend simultaneously the classical $p$-variation BDG inequality~\cite{Lepingle76}, and its version for continuous local martingale rough paths~\cite{FrizVictoir08}). This BDG inequality turns out to be a powerful tool, especially in conjunction with {\it uniform tightness} (UT) and {\it uniformly controlled variation} (UCV) of semimartingale sequences. (Introduced by~\cite{JMP89} and~\cite{KP91} respectively, these conditions are at the heart of basic convergence theorems for stochastic integrals in Skorokhod topology; we work only with UCV in this article, but note that UCV and UT are equivalent under extra assumptions, e.g., convergence in law~\cite[Thm~7.6]{KP96}. See Section~\ref{subsec:convLoc} for the definition of UCV, and also~\cite{CL05} for some links to continuous semimartingale rough paths.)

\smallskip

As an example of an application to general semimartingale theory, we are able to state a criterion for convergence in law (resp. in probability) of Marcus SDEs, which is an analogue of the celebrated criterion for It{\^o} SDEs due to Kurtz--Protter~\cite[Thm.~5.4]{KP91} (we emphasize however that neither criterion is a simple consequence of the other). Loosely speaking, the result asserts that if $X, (X^n)_{n \geq 1}$ are $\R^d$-valued semimartingales such that $(X^n)_{n \geq 1}$ satisfies UCV and $X^n \rightarrow X$ in law (resp. in probability) for the Skorokhod topology, then the solutions to Marcus SDEs driven by $X^n$ (along fixed vector fields) converge in law (resp. in probability) to the Marcus SDE driven by $X$ (see Theorem~\ref{thm:MarcusSDEs} for a precise formulation).
Our theorem (which crucially involves rough paths in the proof, but not in the statement) entails a pleasantly elegant approach to the Wong--Zakai theorem for SDEs with jumps (Kurtz--Protter--Pardoux~\cite{KPP95}, with novel interest from physics~\cite{CP14}) and in fact gives a number of novel limit theorems for Marcus canonical SDEs (see Theorem~\ref{thm:WongZakai}). We remark further that homeomorphism and diffeomorphism properties of solution flows are straightforward, in contrast to rather lengthy and technical considerations required in a classical setting (see, e.g.,~\cite[p.~423]{Applebaum09} and the references therein). At last, we discuss the impact of more general path functions, noting that the ``Marcus choice'' really corresponds to the special case of the linear path function.

\smallskip

The paper is organized as follows. In Section~\ref{sec:RPwJ} we collect some necessary preparatory material, including basic properties of path functions. In Section~\ref{sec:canonicalRDEs} we give meaning to canonical RDEs, for which drivers are (rough path, path function) pairs $(\X,\phi)$, and introduce the metric $\alpha_{\pvar}$ for which the direct analogue of Lyons' universal limit theorem holds.
Section~\ref{sec:cadlagSemimart} is then devoted to applications to c{\`a}dl{\`a}g semimartingale theory, particularly in connection with the UCV condition and Wong--Zakai type approximations. We briefly comment in Section~\ref{sec:Beyond} on the further scope of the theory.

\smallskip

\section{Preparatory material}\label{sec:RPwJ}
\subsection{Wiener and Skorokhod space}
Throughout the paper, we denote by $C([s,t],E)$ and $D([s,t],E)$ the space of continuous and c{\`a}dl{\`a}g functions (paths) respectively from an interval $[s,t]$ into a metric space $(E,d)$. 

Unless otherwise stated, we equip $C([s,t],E)$ and $D([s,t],E)$ respectively with the uniform metric
\[
d_{\infty;[s,t]}(\x,\bar\x) = \sup_{u\in [s,t]}d(\x_u,\bar\x_u)
\]
and the Skorokhod metric
\[
\sigma_{\infty;[s,t]} (\x,\bar\x) = \inf_{\lambda \in \Lambda_{[s,t]}} |\lambda| \vee d_{\infty;[s,t]}(\x\circ \lambda,\bar\x),
\]
where $\Lambda_{[s,t]}$ denotes the set of all strictly increasing bijections of $[s,t]$ to itself, and $|\lambda| := \sup_{u \in [s,t]}|\lambda(u) - u|$.   
When we omit the interval $[s,t]$ from our notation, we will always assume it is $[0,T]$.  We recall that if $E$ is Polish, then so is the Skorokhod space $D([0,T],E)$ with topology induced by $\sigma_\infty$, also known as the $J_1$-topology.

We always let $\D = (t_0 = s < t_1 < \ldots < t_{k-1} < t_k = t)$ denote a partition of $[s,t]$, and notation such as $\sum_{t_i \in \D}$ denotes summation over all points in $\D$ (possibly without the initial/final point depending on the indexing). We let $|\D| = \max_{t_i \in \D} |t_{i+1}-t_i|$ denote the mesh-size of a partition.

For $p > 0$, we define the $p$-variation of a path $\x \in D([s,t],E)$ by
\[
\norm{\x}_{\pvar;[s,t]} := \sup_{\D \subset [s,t]} \Big( \sum_{t_i \in \D} d(\x_{t_i},\x_{t_{i+1}})^p \Big)^{1/p}.
\]
We use superscript notation such as $D^{\pvar}([s,t],E)$ to denote subspaces of paths of finite $p$-variation. For continuous $\x$ only $p \ge 1$ is interesting, for otherwise $\x$ is constant.

\begin{remark}
At least when $E = \R^d$ (or $G^N(\R^d)$, see below) there is an immediate $p$-variation metric and topology. Due to the fact that convergence in $J_1$ topology to a continuous limit is equivalent to uniform convergence, a discontinuous path cannot be approximated by a sequence of continuous paths in the metric $\sigma_\infty$. The same will be true for a $J_1$/$p$-variation (rough path) metric $\sigma_{\pvar}$ below. That said, we will propose below a useful $SM_1$/$p$-variation (rough path) metric $\alpha_{\pvar}$ under which the space of continuous rough paths is not closed.
\end{remark}

\subsection{Rough paths}\label{subsec:RPs}

For $N \geq 1$, we let $G^N(\R^d) \subset T^N(\R^d) \equiv \sum_{k=0}^N (\R^d)^{\otimes k}$ denote the step-$N$ free nilpotent Lie group over $\R^d$, embedded into the truncated tensor algebra ($T^N(\R^d),+,\otimes)$, which we equip with the Carnot-Carath{\'e}odory norm $\normc$ and the induced (left-invariant) metric $d$. Recall that the step-$N$ free nilpotent Lie algebra $\mathfrak{g}^{N}(\R^d) = \log G^N (\R^d)  \subset T^N(\R^d)$ is the Lie algebra of $G^N(\R^d)$. The space $C^{\pvar}([0,T],G^N(\R^d))$, with $N=\floor p$, is the classical space of (continuous, weakly) geometric $p$-rough paths as introduced by Lyons.

Unless otherwise stated, we always suppose a path $\x : [s,t] \rightarrow G^N(\R^d)$ starts from the identity $\x_s = 1_{G^N(\R^d)}$. We denote the increments of a path by $\x_{s,t} = \x^{-1}_s\x_t$. We consider on $D^{\pvar}([s,t], G^N(\R^d))$ the inhomogeneous $p$-variation metric
\begin{equation}\label{eq:rhoDef}
\rho_{\pvar;[s,t]}(\x,\bar\x) = \max_{1 \leq k \leq N} \sup_{\D \subset [s,t]} \Big(\sum_{t_i \in \D} \norms{\x_{t_i,t_{i+1}}^k - \bar\x_{t_i,t_{i+1}}^{k}}^{p/k}\Big)^{k/p}.
\end{equation}
Unless otherwise stated, we shall always assume that $p$ and $N$ satisfy $\floor p \leq N$.

We let $C^{0,\pvar}([s,t], G^N(\R^d))$ denote the closure in $C^{\pvar}([s,t], G^N(\R^d))$ under the metric $\rho_{\pvar}$ of the lifts of smooth paths $C^\infty([s,t],\R^d)$. Recall in particular that $C^{0,\onevar}([s,t], G^N(\R^d))$ is precisely the space of absolutely continuous paths.

We let $V = (V_1,\ldots, V_d)$ denote a collection of vector fields in  $\Lip^{\gamma+m-1}(\R^e)$ with $\gamma > p$ and $m \geq 1$. For a geometric $p$-rough path $\x \in C^{\pvar}([s,t], G^N(\R^d))$, we let $\pi_{(V)}(s,y_s;\x) \in C^{\pvar}([s,t],\R^e)$ denote the solution to the RDE
\[
dy_t = V(y_t) d\x_t, \; \; y_s \in \R^e.
\]
We let $U^\x_{t\leftarrow s} : \R^e \to \R^e$ denote the associated flow map $y \mapsto \pi_{(V)}(s,y;\x)_t$, which we recall is an element of $\Diff^{m}(\R^e)$.
For further details on the theory of (continuous) rough paths theory, we refer to~\cite{FrizVictoir10}.

For the purpose of his paper we have (cf.~\cite{FrizShekhar17})

\begin{definition}\label{def:RPs}
Let $1 \leq p < N+1$. Any $\x \in D^{\pvar}([0,T],G^{N}(\R^d))$ is called a \textit{general (c\`adl\`ag, weakly) geometric $p$-rough path over } $\mathbb{R}^d$. Define $\Delta \x_t := \x_{t-}^{-1} \otimes \x_t$ and say $\x$ is Marcus-like if 
for all $t \in [0,T]$
\[
\log \Delta \x_t \in \mathbb{R}^{d}\oplus \left\{ 0\right\} \oplus ... \oplus \left\{ 0\right\}  \subset \mathfrak{g}^{N}(\mathbb{R}^{d}),
\]
where $\log$ is taken in $T^N(\R^d)$.
\end{definition}
As we will see later, any canonical lift of a general $d$-dimensional semimartingale $X$ (with area given by $\tfrac{1}{2} \int [ X^-, dX]$) gives rise to a Marcus-like general geometric $p$-rough path for $p>2$. The model case
of L{\'e}vy processes was studied in~\cite{Williams98, FrizShekhar17}.

\subsection{Path functions}
\label{subsec:pathFuncs}

We briefly review and elaborate on the concept of a path function introduced in~\cite{Chevyrev15}. Let $(E,d)$ be a metric space.

\begin{definition}
A path function on $E$ is a map $\phi : J \to C([0,1],E)$ defined on a subset $J \subseteq E\times E$ for which $\phi(x,y)_0 = x$ and $\phi(x,y)_1 = y$ for all $(x,y) \in J$.

For a path $\x \in D([0,T], E)$, we say that $t \in [0,T]$ is a jump time of $\x$ if $\x_{t-} \neq \x_t$. We call the pair $(\x,\phi)$ admissible if $(\x_{t-},\x_t) \in J$ for all jumps times $t$ of $\x$.
We say that two admissible pairs $(\x,\phi)$ and $(\bar\x,\bar\phi)$ are equivalent, and write $(\x,\phi) \sim (\bar \x,\bar \phi)$, if $\x = \bar \x$ and $\phi(\x_{t-},\x_t)$ is a reparametrization of $\bar\phi(\x_{t-},\x_t)$.

We denote by $\bar\DD([0,T],E)$ the set of all admissible pairs $(\x,\phi)$, and by $\DD([0,T],E) = \bar\DD([0,T],E)/\sim$ the set of all equivalence classes of admissible pairs.
For a fixed path function $\phi$, let $\DD_\phi([0,T],E)$ denote the set of all $\x \in D([0,T], E)$ such that $(\x,\phi)$ is admissible.
\end{definition}

We will often simply say that $\phi$ is a path function on $E$ and keep implicit the fact there is an underlying domain of definition $J$.
We point out that situations where $J \neq E\times E$ arise naturally when studying solution maps of canonical c{\`a}dl{\`a}g RDEs, see Theorem~\ref{thm:alphaMetric} and the discussion before it.

In the case that $E$ is a Lie group with identity element $1_E$ (taken in this article to always be $G^N(\R^d)$), we shall often assume that $\phi$ is \emph{left-invariant}, which is to say that there exists a subset $B \subseteq E$ such that $J = \left\{(x,y) \in E\times E \midset x^{-1}y \in B\right\}$ and
\[
\phi(x,y)_t = x\phi(1_E,x^{-1}y)_t, \; \forall (x,y) \in J, \; \forall t \in [0,1].
\]
In this case, it is equivalent to view $\phi$ as a map $\phi : B \to C([0,1],E)$ such that $\phi(x)_0 = 1_E$ and $\phi(x)_1 = x$ for all $x \in B$, for which $\phi(x,y)_t = x\phi(x^{-1}y)_t$. Whenever we write $\phi$ with only one argument as $\phi(x)$, we shall always mean that it is left-invariant.

\begin{example}[log-linear and Marcus path function] \label{ex:logLinear}
The prototypical example of a (left-invariant) path function $\phi$ on $G^N(\R^d)$, which we shall often refer to in the paper, is the \emph{log-linear} path function
\[
\phi(x)_t = e^{t\log x}, \; \; \forall x \in G^N(\R^d), \; \forall t \in [0,1],
\]
where $\log$ is taken in $T^N(\R^d)$.
Since $J = G^N(\R^d) \times G^N(\R^d)$, we have $(\x,\phi)$ is admissible for all $\x \in D([0,T],G^N(\R^d))$. When $N=1$ we see a familiar special case: since $G^1(\R^d) \cong \R^d$, one has $\phi(x)_t = t x$, and then $\phi (x,y) = x + t(y - x).$ This is precisely the ``Marcus interpolation'' of a c{\`a}dl{\`a}g path before and after its jump. See parametric plots in Figures~\ref{fig:cadlag} and~\ref{fig:Marcus}.  
\end{example}

\begin{figure}
\centering
\begin{minipage}{0.45\textwidth}
\centering
\caption{$2$-dimensional c\`adl\`ag path} \label{fig:cadlag}
\smallskip
\begin{tikzpicture}[scale=0.75]
\begin{axis}[
  xtick={0,4,8}, ytick={0,4,8},  xticklabels={,,},  yticklabels={,,},  xlabel={$x$}, ylabel={$y$}, xlabel style={below right}, ylabel style={above left},
  xmin= -3, xmax=10,  ymin=-3,  ymax=10]]
  
\addplot[domain=0:3,blue, thick] ({-1+x*3},{7-(x-3)^2});   
\addplot[domain=3:6,blue, thick, samples = 200] ({ 7- 2*(x-3)},{5 + sin(  (x-3)/(6-3) * 3 *360 ) }); 
\addplot[domain=6:10,blue, thick, samples= 200] ({0 - (x-6)/2},{8-(x-6)/2 - sin(  (x-6)/(10-6) * 1 *360 ) / 3 });
\draw[dotted, blue,->] (axis cs:8,7) -- (axis cs:7,5);
\draw[dotted, blue] (axis cs:1,5) -- (axis cs:0,8);

\addplot[soldot] coordinates{(7,5)(0,8)};   
\addplot[holdot] coordinates{(8,7)(1,5)}; 

\end{axis}
\end{tikzpicture}
\end{minipage}
\hfill
\begin{minipage}{0.45\textwidth}
\centering
\caption{Marcus interpolation} \label{fig:Marcus}
\smallskip
\begin{tikzpicture}[scale=0.75]
\begin{axis}[
  xtick={0,4,8}, ytick={0,4,8},  xticklabels={,,},  yticklabels={,,},  xlabel={$x$}, ylabel={$y$}, xlabel style={below right}, ylabel style={above left},
  xmin= -3, xmax=10,  ymin=-3,  ymax=10]]

\addplot[domain=0:3,blue, thick] ({-1+x*3},{7-(x-3)^2});   
\addplot[domain=3:6,blue, thick, samples = 200] ({ 7- 2*(x-3)},{5 + sin(  (x-3)/(6-3) * 3 *360 ) }); 
\addplot[domain=6:10,blue, thick, samples= 200] ({0 - (x-6)/2},{8-(x-6)/2 - sin(  (x-6)/(10-6) * 1 *360 ) / 3 });

\draw[ultra thick, red] (axis cs:8,7) -- (axis cs:7,5);
\draw[ultra thick, red] (axis cs:1,5) -- (axis cs:0,8);

\addplot[soldotred] coordinates{(7,5)(0,8)(8,7)(1,5)};   
\end{axis}
\end{tikzpicture}
\end{minipage}
\end{figure}

For $(\x,\phi) \in \bar\DD([0,T],E)$ we now construct a continuous path $\x^{\phi} \in C([0,T], E)$ as follows. Fix a convergent series of strictly positive numbers $\sum_{k=1}^\infty r_k$. Let $t_1, t_2, \ldots$ be the jump times of $\x$ ordered so that $d(\x_{t_1-},\x_{t_1}) \geq d(\x_{t_2-},\x_{t_2}) \geq \ldots$, and $t_j < t_{j+1}$ if $d(\x_{t_j-},\x_{t_j}) = d(\x_{t_{j+1}-},\x_{t_{j+1}})$. Let $0 \leq m \leq \infty$ be the number of jumps of $\x$.

Let $r = \sum_{k=1}^m r_k$ and define the strictly increasing (c{\`a}dl{\`a}g) function
\[
\tau : [0,T] \to [0, T+r], \; \; \tau(t) = t + \sum_{k=1}^m r_{k} \1{t_k \leq t}.
\]
Note that $\tau(t-) < \tau(t)$ if and only if $t = t_k$ for some $1 \leq k < m+1$. Moreover, note that the interval $[\tau(t_k-), \tau(t_k))$ is of length $r_{k}$.

Define $\widehat \x \in C([0,T+r],E)$ by
\[
\widehat \x_s =
\begin{cases} \x_t &\mbox{if $s = \tau(t)$ for some $t \in [0,T]$} \\ 
\phi(\x_{t_k-}, \x_{t_k})_{(s-\tau(t_k-))/r_{k}} &\mbox{if $s \in [\tau(t_k-), \tau(t_k))$ for some $1 \leq k < m+1$}.
\end{cases}
\]
Denote by $\tau_r(t) = t(T+r)/T$ the increasing linear bijection from $[0,T]$ to $[0,T+r]$. We finally define
\begin{equation*}\label{def:xphi}
\x^\phi = \widehat \x \circ \tau_r \in C([0,T], E).
\end{equation*}
We note that one can recover $\x = \x^\phi \circ \tau_\x$ via the time change
\begin{equation}\label{eq:timeChange}
\tau_\x := \tau_r^{-1} \circ \tau,
\end{equation}
for which it holds that
\begin{equation*} \label{littleest}
\sup_{t \in [0,T]}|\tau_\x(t) - t| \leq \sum_{k=1}^\infty r_k.
\end{equation*}

\begin{figure}
\centering
\caption{Hoff interpolation $\x^\phi$, with $\phi$ given as in Example~\ref{ex:Hoff}} \label{fig:Hoff}
\smallskip
\begin{tikzpicture}[scale=0.75]
\begin{axis}[
xtick={0,4,8}, ytick={0,4,8},  xticklabels={,,},  yticklabels={,,},  xlabel={$x$}, ylabel={$y$}, xlabel style={below right}, ylabel style={above left},
  xmin= -3, xmax=10,  ymin=-3,  ymax=10]]

\addplot[domain=0:3,blue, thick] ({-1+x*3},{7-(x-3)^2});   
\addplot[domain=3:6,blue, thick, samples = 200] ({ 7- 2*(x-3)},{5 + sin(  (x-3)/(6-3) * 3 *360 ) }); 
\addplot[domain=6:10,blue, thick, samples= 200] ({0 - (x-6)/2},{8-(x-6)/2 - sin(  (x-6)/(10-6) * 1 *360 ) / 3 });

\draw[ultra thick, red] (axis cs:8,7) -- (axis cs:7,7);
\draw[ultra thick, red] (axis cs:7,7) -- (axis cs:7,5);
\draw[ultra thick, red] (axis cs:1,5) -- (axis cs:0,5);
\draw[ultra thick, red] (axis cs:0,5) -- (axis cs:0,8);

\addplot[soldotred] coordinates{(7,5)(0,8)(8,7)(1,5)}; 
\end{axis}
\end{tikzpicture}
\end{figure}
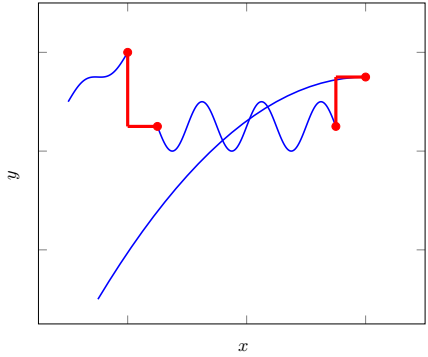

\begin{remark}[Intrinsic definition of $\x^\phi$] \label{rmk:xPhiwelldefined} The construction of $\x^\phi$ involves an ad-hoc choice, namely the sequence $(r_n)$ and the increasing bijection $\tau_r$. If $\bar \x^\phi$ is constructed similarly, but via a sequence $(\bar r_n)$, followed by another reparametrisation given by $\bar \tau_{\bar r}$, then $\x^\phi$ and $\bar \x^\phi$ are reparametrizations of one another.
\end{remark}

\begin{remark}
The construction above is similar to ones appearing in~\cite{FrizShekhar17, Williams01}, and is a simplification of the construction in~\cite{Chevyrev15}. The primary difference is that in~\cite{Chevyrev15} the added fictitious time $r_k$ for the jump $t_k$ depended further on the size of the jump $d(\x_{t_k-},\x_{t_k})$. This extra dependence was used to show continuity of the map $\x \mapsto \x^\phi$ from $D([0,T],E) \to C([0,T],E)$, which we will not require here.
\end{remark}

\subsection{A generalisation of Skorokhod's $SM_1$ topology}
\label{subsec:SkorSM1}

For $(\x,\phi) \in \bar\DD([0,T],E)$ and $\delta > 0$, let $\x^{\phi,\delta} \in C([0,T],E)$ be constructed in the same procedure as $\x^\phi$ but using the series $\sum_{k=1}^\infty \delta r_k$ instead of $\sum_{k=1}^\infty r_k$.

\begin{lemma}\label{lem:limExists}
For all $(\x,\phi), (\bar \x, \bar \phi) \in \bar\DD([0,T],E)$, the limit 
\begin{equation}\label{eq:limSigma}
\lim_{\delta \rightarrow 0} \sigma_{\infty;[0,T]} (\x^{\phi,\delta},\bar \x^{\bar\phi,\delta})
\end{equation}
exists, is independent of the choice of series $\sum_{k=1}^\infty r_k$, and induces a pseudometric on the set of equivalence classes $\DD([0,T],E)$.
\end{lemma}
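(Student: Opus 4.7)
The plan is to deduce all three statements from a single reparametrization lemma quantifying how $\x^{\phi,\delta}$ depends on $\delta$. Specifically, for any $(\x,\phi) \in \bar\DD([0,T],E)$ and $\delta, \delta' > 0$, I claim there is $\mu_{\delta,\delta'} \in \Lambda_{[0,T]}$ with $\x^{\phi,\delta'} = \x^{\phi,\delta} \circ \mu_{\delta,\delta'}$ and
\[
|\mu_{\delta,\delta'}| \le 2r|\delta - \delta'|, \qquad r := \sum_{k=1}^\infty r_k,
\]
and analogously for $\bar\mu_{\delta,\delta'}$. To prove this I would write $\mu_{\delta,\delta'} = \tau_{\delta r_\x}^{-1} \circ \nu_{\delta,\delta'}^{-1} \circ \tau_{\delta' r_\x}$, where $r_\x = \sum_{k=1}^m r_k$ is summed over the jumps of $\x$ and $\nu_{\delta,\delta'} : [0, T + \delta r_\x] \to [0, T + \delta' r_\x]$ sends $\tau^\delta(s) \mapsto \tau^{\delta'}(s)$ at essential times and linearly rescales each inserted interval. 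Since $\nu_{\delta,\delta'}(s) - s$ simply tracks the difference in total inserted length up to $s$, one gets $|\nu_{\delta,\delta'}(s) - s| \le r_\x|\delta - \delta'|$; combined with the fact that $\tau_{\delta r_\x}^{-1}$ has Lipschitz constant $T/(T + \delta r_\x) \le 1$, the triangle inequality delivers the displayed bound.

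Existence of the limit in \eqref{eq:limSigma} then follows from standard Skorokhod-metric juggling. For any $\gamma' \in \Lambda_{[0,T]}$ set $\gamma := \mu_{\delta,\delta'} \circ \gamma' \circ \bar\mu_{\delta,\delta'}^{-1} \in \Lambda_{[0,T]}$; by a change of variable in the supremum,
\[
d_{\infty;[0,T]}\bigl(\x^{\phi,\delta'} \circ \gamma', \bar\x^{\bar\phi,\delta'}\bigr) = d_{\infty;[0,T]}\bigl(\x^{\phi,\delta} \circ \gamma, \bar\x^{\bar\phi,\delta}\bigr),
\]
while $|\gamma| \le |\gamma'| + |\mu_{\delta,\delta'}| + |\bar\mu_{\delta,\delta'}|$ by the triangle inequality. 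Taking the infimum over $\gamma'$ and symmetrizing in $(\delta,\delta')$ yields
\[
\bigl| \sigma_{\infty;[0,T]}(\x^{\phi,\delta},\bar\x^{\bar\phi,\delta}) - \sigma_{\infty;[0,T]}(\x^{\phi,\delta'},\bar\x^{\bar\phi,\delta'}) \bigr| \le 4r|\delta - \delta'|,
\]
so $\delta \mapsto \sigma_{\infty;[0,T]}(\x^{\phi,\delta},\bar\x^{\bar\phi,\delta})$ is Lipschitz on $(0,\infty)$ and the limit as $\delta \to 0^+$ exists.

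For independence of the series and the pseudometric property on $\DD$, I would rerun the same argument in two further settings. First, comparing $\x^{\phi,\delta}$ built from $(r_k)$ with the analogous $\tilde\x^{\phi,\delta}$ built from a second convergent positive series $(r'_k)$ produces a reparametrization of norm $O(\delta)$ between them, so the two limits coincide. Second, if $(\x,\phi) \sim (\x',\phi')$, then by construction $\x^{\phi,\delta}$ and ${\x'}^{\phi',\delta}$ insert intervals of identical lengths at each jump and on each such interval place paths that are reparametrizations of one another; the composite reparametrization of $[0,T]$ is the identity at all essential times, has norm at most $\delta \max_k r_k$, and one more Skorokhod-juggling step gives equality of the limits. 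Symmetry and the triangle inequality transfer directly from $\sigma_{\infty;[0,T]}$ to its limit, yielding a pseudometric on $\DD$.

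The main obstacle is the reparametrization lemma itself. Conceptually it is clear that perturbing $\delta$ should produce a small-amplitude reparametrization, but one must carefully unwind the cascade of three time changes $\tau^\delta$, $\nu_{\delta,\delta'}$, and $\tau_{\delta r_\x}$, and in particular exploit the fact that the global rescaling factor $T/(T + \delta r_\x)$ introduced by the last stage has Lipschitz constant $\le 1$ and hence does not amplify the $O(|\delta-\delta'|)$ error coming from $\nu_{\delta,\delta'} - \id$.
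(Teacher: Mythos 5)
Your proposal is correct and follows essentially the same route as the paper's proof: exhibit an explicit reparametrization $\lambda\in\Lambda$ with $\x^{\phi,\delta'}=\x^{\phi,\delta}\circ\lambda$ and $|\lambda|=O(r(\delta+\delta'))$, then use subadditivity of $|\cdot|$ under composition of time changes to get an almost-Cauchy estimate for $\sigma_\infty$, with the series-independence and pseudometric claims handled by the same juggling. Your version merely sharpens the paper's bound $2(\delta+\bar\delta)\sum r_k$ to the Lipschitz-type bound $2r|\delta-\delta'|$ and spells out the ``straightforward'' steps in more detail.
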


\begin{proof}
To show that the limit exists, note that for every $\delta, \bar\delta > 0$, there exists $\lambda \in \Lambda$ such that $|\lambda| < 2(\delta + \bar\delta)\sum r_k$ and $\x^{\phi,\bar\delta} = \x^{\phi,\delta}\circ \lambda$.
Since $|\lambda \circ \bar\lambda| \leq |\lambda| + |\bar\lambda|$, it follows that for every $\delta, \bar \delta > 0$
\[
|\sigma_{\infty}(\x^{\phi,\delta},\bar\x^{\bar\phi,\delta}) - \sigma_{\infty}(\x^{\phi,\bar\delta}, \bar\x^{\bar\phi,\bar\delta})| < 4(\bar \delta + \delta)\sum r_k,
\]
from which the existence of the limit follows.
The fact that~\eqref{eq:limSigma} is independent of the series $\sum r_k$ and is zero if $(\x,\phi)\sim (\bar \x,\bar\phi)$ is straightforward.
\end{proof}

\begin{definition}\label{def:alphaInfty}
Define the pseudometric $\alpha_{\infty}$ on $\DD([0,T],E)$ by
\begin{equation}\label{eq:defAlpha}
\alpha_\infty(\x,\bar \x) := \alpha_{\infty;[0,T]}((\x, \phi), (\bar \x, \bar \phi)) := \lim_{\delta \rightarrow 0} \sigma_{\infty;[0,T]} (\x^{\phi,\delta},\bar \x^{\bar\phi,\delta}).
\end{equation}
(Usually no confusion will arise by using the abusive notation on the left-hand side.)
\end{definition}

Note that for any fixed $\phi$, $\alpha_\infty$ induces a genuine metric on the space $\DD_{\phi}([0,T],E) \subseteq D([0,T],E)$.
We note also that the strong $M_1$ (i.e., $SM_1$) topology on the space $D([0,T],\R^d)$ is a special case of the topology induced by the metric $\alpha_\infty$, as demonstrated by the following result.

\begin{proposition}
For $E=\R^d$ and $\phi$ the linear path function, it holds that $\alpha_\infty$ induces the $SM_1$ topology on the space $D([0,T],\R^d)$.
\end{proposition}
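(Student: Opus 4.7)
The plan is to show that, for the linear path function $\phi$ on $\R^d$, the metric $\alpha_\infty$ induces precisely the Skorokhod $SM_1$ topology on $D([0,T],\R^d)$. Recall that $SM_1$ is metrized by $d_{SM_1}(\x,\bar\x)=\inf\{\|u-v\|_\infty\vee\|r-s\|_\infty\}$, the infimum taken over all pairs of parametric representations $(u,r),(v,s):[0,1]\to\R^d\times[0,T]$ of $\x,\bar\x$: continuous maps with $r,s$ non-decreasing surjective, tracing the completed graphs obtained by filling each jump by its straight-line segment. The key structural observation is that for the linear $\phi$, the construction of $\x^{\phi,\delta}$ canonically encodes such a parametric representation: the filled-in continuous path $\widehat\x:[0,T+\delta R]\to\R^d$ (where $R=\sum r_k$) together with the non-decreasing continuous surjection $\pi_\delta:[0,T+\delta R]\to[0,T]$ that collapses each fictitious window $[\tau(t_k-),\tau(t_k))$ to $t_k$ (and is affine elsewhere) traces exactly the completed graph of $\x$; after rescaling the domain to $[0,1]$, this defines a parametric representation $P^\delta(\x)=(u_\x^\delta,r_\x^\delta)$ with $\|r_\x^\delta-\id\|_\infty=O(\delta)$.

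For the inequality $d_{SM_1}(\x,\bar\x)\leq\alpha_\infty(\x,\bar\x)$, observe that since $\x^{\phi,\delta}$ and $\bar\x^{\phi,\delta}$ are continuous, $\sigma_\infty(\x^{\phi,\delta},\bar\x^{\phi,\delta})=\inf_{\lambda}(|\lambda|\vee\|\x^{\phi,\delta}\circ\lambda-\bar\x^{\phi,\delta}\|_\infty)$. A near-optimal $\lambda$ turns $P^\delta(\x)\circ\lambda$ and $P^\delta(\bar\x)$ into a pair of parametric representations of $\x$ and $\bar\x$ whose sup-norms differ by at most $\sigma_\infty(\x^{\phi,\delta},\bar\x^{\phi,\delta})+O(\delta)$; sending $\delta\to 0$ gives the claim.

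For the reverse inequality, given near-optimal $SM_1$ parametric representations $(u,r),(v,s)$ of $\x,\bar\x$, construct for each small $\delta$ a time change $\lambda_\delta\in\Lambda_{[0,T]}$ that maps the fictitious window of $\x^{\phi,\delta}$ at each jump of $\x$ onto the fictitious window of $\bar\x^{\phi,\delta}$ at the jump of $\bar\x$ paired to it by $(r,s)$, and is consistent with $(r,s)$ elsewhere. Inside paired windows the two paths are line segments whose endpoints differ by at most $\|u-v\|_\infty$; outside, the discrepancy is bounded by $\|u-v\|_\infty$ plus a term controlled by $\|r-s\|_\infty$ (via uniform continuity of the filled-in paths between jumps), with an $O(\delta)$ contribution from window widths. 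Sending $\delta\to 0$ then gives $\alpha_\infty\leq d_{SM_1}$. The main obstacle is that the canonical construction of $\x^{\phi,\delta}$ orders its fictitious windows by jump size via fixed weights $(r_k)$, whereas $(r,s)$ may pair jumps in a different order-preserving manner and the paired windows can have different widths $\delta r_k,\delta r_{k'}$; this is resolved by using the freedom in $\lambda_\delta$ to reshuffle and rescale windows at a cost of $O(\delta)$ in $|\lambda_\delta|$ (since $\sum\delta r_k$ converges), combined with a truncation to finitely many large jumps before passing to the limit.
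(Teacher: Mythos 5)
Your route is the same one the paper takes (its proof is a one-line reference to the equivalence of $\alpha_\infty$ with Whitt's parametric-representation metric $d_s$ for $SM_1$), and your first half is a correct fleshing-out of it: reading off a parametric representation of the completed graph from $\x^{\phi,\delta}$ together with the window-collapsing time component $\pi_\delta$, whose distance to the identity is $O(\delta)$, does give $d_{SM_1}\le\alpha_\infty$ in the limit $\delta\to 0$.

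The reverse inequality, however, contains a step that fails as written. You build $\lambda_\delta$ by sending the fictitious window of each jump of $\x$ onto ``the fictitious window of $\bar\x^{\phi,\delta}$ at the jump of $\bar\x$ paired to it by $(r,s)$''. In general no such jump-to-jump pairing exists: near-optimal parametric representations may shadow a jump segment of $\x$ by a stretch of continuous motion of $\bar\x$, or by several smaller jumps (this is exactly the regime of interest, e.g.\ continuous paths converging in $SM_1$ to a jump path), so the recipe is not well defined, and your ``main obstacle'' paragraph only treats reordering and rescaling of windows once such a pairing is assumed. Compounding this, away from the windows you bound the discrepancy by $\|u-v\|_\infty$ plus a term controlled by $\|r-s\|_\infty$ ``via uniform continuity of the filled-in paths between jumps''. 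A c\`adl\`ag path may have a dense set of jump times, so there are no jump-free intervals to appeal to, and the uniform modulus of $\x^{\phi,\delta}$ on $[0,T]$ degenerates as $\delta\to 0$ (the ramps over the inserted windows steepen), so an estimate of this form does not survive the limit in $\delta$. The repair stays within your strategy but must use the graph correspondence everywhere, not only at jumps: from near-optimal $(u,r),(v,s)$ one obtains a monotone correspondence between the completed graphs, and $\lambda_\delta$ should be chosen as a strictly increasing small perturbation of this correspondence read through the parametrizations of $\x^{\phi,\delta}$ and $\bar\x^{\phi,\delta}$, so that for every $t$ the graph points represented by $\x^{\phi,\delta}(\lambda_\delta(t))$ and $\bar\x^{\phi,\delta}(t)$ are (approximately) matched. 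Then the spatial error is at most $\|u-v\|_\infty$ plus arbitrarily small slack, with no modulus of continuity entering, and $|\lambda_\delta|\le\|r-s\|_\infty+O(\delta)$ because matched graph points have time coordinates within $\|r-s\|_\infty$ while $\pi_\delta,\bar\pi_\delta$ are within $O(\delta)$ of the identity; your truncation to finitely many large jumps is then a legitimate device for making the perturbation explicit and handling collapses (intervals where one side of the correspondence pauses) by nearly flat, respectively nearly vertical, pieces of $\lambda_\delta$.
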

\begin{proof}
It is straightforward to verify that $\alpha_\infty$ in this case is equivalent to the metric $d_s$ (see~\cite[Sec.~12.3.1]{Whitt02}) which induces the $SM_1$ topology.
\end{proof}

\begin{remark}
The reader may wonder if convergence in the (Skorokhod $J_1$) metric $\sigma_\infty$
implies, as in the classical setting, convergence in the (Skorokhod $SM_1$-type) pseudometric $\alpha_\infty$.
In essence, the answer is yes, however this requires ``reasonable'' path functions, see Lemma~\ref{lem:alphSigma}.
\end{remark}

\begin{remark}[Restriction of time interval] \label{remark:alphaRest}
It is trivial to see that uniform convergence of paths on $[0,T]$ implies convergence on any subinterval of $[0,T]$, while this fails for both Skorokhod $J_1$ and $(S)M_1$ metrics.
The observation generalizes to our setting and, in particular, the $\alpha_\infty$ metric does not behave well under restriction.
Indeed, while for any $(\x,\phi) \in \DD([0,T],E)$ the jumps of $\x|_{[s,t]}$ still belong to $J$, so that $(\x|_{[s,t]})^\phi$ is well-defined, it does not hold that $\alpha_{\infty}(\x^n,\x) \rightarrow 0$ implies that $\alpha_{\infty}(\x^n|_{[s,t]},\x|_{[s,t]}) \rightarrow 0$.
\end{remark}

We now collect several useful definitions and lemmas concerning path functions.

\begin{lemma}\label{lem:contPoints}
Let $(\x,\phi) \in \DD([0,T],E)$ for which
\begin{equation}\label{eq:vanishDiagonal}
\lim_{n \rightarrow \infty} \sup_{s \in [0,1]} d(\x_{t_n-}, \phi(\x_{t_n-},\x_{t_n})_s) = 0,
\end{equation}
where the limit is taken over some enumeration of jump times of $\x$. Let $(\x^k,\phi^k)_{k \geq 1}$ be a sequence in $\DD([0,T],E)$ such that $\alpha_{\infty}(\x^k,\x) \rightarrow 0$. Then $\x^k_t \rightarrow \x_t$ for every continuity point of $\x$.
\end{lemma}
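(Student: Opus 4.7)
The plan is to translate the $\alpha_\infty$-convergence into a pointwise bound via the identity $\x^k = (\x^k)^{\phi^k,\delta} \circ \tau_{\x^k,\delta}$ (the obvious $\delta$-analogue of~\eqref{eq:timeChange}), and then to use continuity of $\x$ at $t$ together with hypothesis~\eqref{eq:vanishDiagonal} to control the continuous approximants $\x^{\phi,\delta}$ in a neighbourhood of $t$. Since $\alpha_\infty(\x^k,\x)$ is defined as the limit $\delta \downarrow 0$ of $\sigma_\infty((\x^k)^{\phi^k,\delta}, \x^{\phi,\delta})$, a diagonal choice produces $\delta_k \downarrow 0$ such that
\[
\epsilon_k := \sigma_\infty\bigl((\x^k)^{\phi^k,\delta_k}, \x^{\phi,\delta_k}\bigr) \to 0,
\]
and I would pick witnessing $\lambda_k \in \Lambda_{[0,T]}$ satisfying $|\lambda_k|+d_\infty((\x^k)^{\phi^k,\delta_k}\circ \lambda_k, \x^{\phi,\delta_k}) < 2\epsilon_k$.

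Setting $\hat t_k := \tau_{\x^k,\delta_k}(t)$ and $u_k := \lambda_k^{-1}(\hat t_k)$, one has $\x^k_t = (\x^k)^{\phi^k,\delta_k}(\hat t_k)$ and the uniform control on $\tau_{\x^k,\delta_k} - \mathrm{id}$ gives $|\hat t_k - t|, |u_k - \hat t_k| \to 0$. Therefore
\[
d\bigl(\x^k_t, \x^{\phi,\delta_k}(u_k)\bigr) = d\bigl((\x^k)^{\phi^k,\delta_k}(\lambda_k(u_k)), \x^{\phi,\delta_k}(u_k)\bigr) < 2\epsilon_k \to 0,
\]
so it suffices to show $\x^{\phi,\delta_k}(u_k) \to \x_t$. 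This reduction is essentially bookkeeping and does not yet use~\eqref{eq:vanishDiagonal}.

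The main obstacle is the convergence $\x^{\phi,\delta_k}(u_k) \to \x_t$, which is precisely where~\eqref{eq:vanishDiagonal} enters. Set $v_k := \tau_r^{\delta_k}(u_k) \to t$, so $\widehat\x^{\delta_k}(v_k) = \x^{\phi,\delta_k}(u_k)$. By construction, $v_k$ falls into exactly one of two cases: either (i) $v_k$ is the image under $\tau^{\delta_k}$ of a real time $s_k$, in which case $\widehat\x^{\delta_k}(v_k) = \x_{s_k}$ with $s_k \to t$, and continuity of $\x$ at $t$ yields $\x_{s_k} \to \x_t$; or (ii) $v_k$ sits inside the interpolation interval of some jump $t_{j_k}$ of $\x$ with $t_{j_k} \to t$, and $\widehat\x^{\delta_k}(v_k) = \phi(\x_{t_{j_k}-}, \x_{t_{j_k}})_{s_k}$ for some $s_k \in [0,1]$. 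To handle (ii), I would first argue that $j_k \to \infty$: for any fixed $N$, the finite set $\{t_1,\dots,t_N\}$ of the $N$ largest jumps is bounded away from $t$ (continuity point), so for $k$ large $t_{j_k}$ must miss this set and $j_k > N$. Then~\eqref{eq:vanishDiagonal} gives $\sup_s d(\x_{t_{j_k}-}, \phi(\x_{t_{j_k}-}, \x_{t_{j_k}})_s) \to 0$, while continuity of $\x$ at $t$ gives $\x_{t_{j_k}-} \to \x_t$; the triangle inequality then yields $\phi(\x_{t_{j_k}-}, \x_{t_{j_k}})_{s_k} \to \x_t$ uniformly in $s_k$. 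Passing to subsequences to separate cases (i) and (ii) completes the proof.
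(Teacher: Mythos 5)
Your argument is correct and is essentially the paper's own proof, just with all the bookkeeping made explicit. The paper compresses the same reasoning into the single assertion that, by the definition of $\alpha_\infty$, continuity of $\x$ at $t$, and condition~\eqref{eq:vanishDiagonal}, one has $\sup_{s \in [t-\delta, t+\delta]} d(\x_t, (\x^k)^{\phi^k,\delta_k}_s) < \varepsilon$ for $k$ large and $\delta_k$ small, which yields the conclusion since $\x^k_t = (\x^k)^{\phi^k,\delta_k}(\tau_{\x^k,\delta_k}(t))$ with $\tau_{\x^k,\delta_k}(t)$ near $t$. Your case analysis on whether $v_k$ lands on a genuine time or inside a fictitious interpolation interval, the observation that in the latter case $j_k \to \infty$ because $t$ is a continuity point and the top $N$ jump times form a finite set avoiding $t$, and the appeal to~\eqref{eq:vanishDiagonal} precisely to control those interpolation intervals, together spell out exactly what the paper means by ``using~\eqref{eq:vanishDiagonal} and the definition of $\alpha_\infty$''.
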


\begin{remark}\label{remark:goodCondition}
Note that condition~\eqref{eq:vanishDiagonal} is satisfied whenever $\phi$ is either endpoint continuous or $(\x,\phi)$ has finite $p$-variation (see Definitions~\ref{def:pvar} and~\ref{def:endpointCont} below).
In particular, since c{\`a}dl{\`a}g paths are uniquely determined by their continuity points, it follows from Lemma~\ref{lem:contPoints} that $\alpha_\infty$ is a genuine metric on the space $\DD^{\pvar}([0,T],E)$ introduced in Definition~\ref{def:pvar}.
\end{remark}

\begin{proof}
Suppose $t$ is a continuity point of $\x$.
Then using~\eqref{eq:vanishDiagonal} and the definition of $\alpha_\infty$, it holds that for every $\varepsilon>0$ there exists $\delta> 0$ such that for all $k$ sufficiently large and $\delta_k$ sufficiently small we have
\[
\sup_{s \in [t-\delta,t+\delta]}d(\x_t, (\x^k)^{\phi_k,\delta_k}_s) < \varepsilon,
\]
from which the conclusion follows.
\end{proof}

\begin{definition}\label{def:pvar}
For $p \geq 1$, we define the $p$-variation of $(\x,\phi) \in \DD([0,T],E)$ as
\[
\norm{(\x,\phi)}_{\pvar;[0,T]} := \|\x^\phi\|_{\pvar;[0,T]}
\]
and let $\DD^{\pvar}([0,T],E)$ denote all $(\x,\phi) \in \DD([0,T],E)$ of finite $p$-variation.

Moreover, a path function $\phi : J \to C([0,1], E)$ is called $p$-approximating if there exists a function $\eta_{\pvar} : [0,\infty) \rightarrow [1,\infty)$ such that for all $r \in [0,\infty)$
\[
\sup_{(x,y) \in J; d(x,y) \leq r}\norm{\phi(x,y)}_{\pvar; [0,1]} \leq \eta_{\pvar}(r) d(x,y).
\]
We say that $\eta_{\pvar}$ is a $p$-variation modulus of $\phi$.
\end{definition}

\begin{remark} \label{rmk:welldefined} Due to the invariance of $p$-variation norms under reparametrizations, and our previous Remark \ref{rmk:xPhiwelldefined}, we see that there is no ambiguity in the definition of $\norm{(\x,\phi)}_{\pvar;[0,T]}$ and that $\DD^{\pvar}([0,T],E)$ is well-defined.
\end{remark}

The following lemma gives a simple criterion for a pair $(\x,\phi) \in \DD([0,T],E)$ to have finite $p$-variation.

\begin{lemma}[\cite{Chevyrev15} Lemma~A.5]\label{lem:pvarJumps}
Let $p \geq 1$ and set $R = 1+2^p + 3^{p-1}$. Then for every $(\x,\phi) \in \DD([0,T],E)$, it holds that
\begin{multline*}
\norm{\x}_{\pvar;[0,T]}^p \vee \Big(\sum_{t}\norm{\phi(\x_{t-},\x_{t})}_{\pvar;[0,1]}^p \Big)
\leq \|\x^\phi\|_{\pvar;[0,T]}^p
\\
\leq R\norm{\x}_{\pvar;[0,T]}^p + (R+3^{p-1})\sum_{t}\norm{\phi(\x_{t-},\x_{t})}_{\pvar;[0,1]}^p,
\end{multline*}
where the summations are over the jump times of $\x$.

In particular, if $\phi$ has a $p$-variation modulus $\eta_{\pvar}$, then for all $\x \in \DD_\phi([0,T],E)$,
\[
\|\x^\phi\|_{\pvar;[0,T]}^p
\leq \left[R + \eta_{\pvar}(r)^p(R + 3^{p-1}) \right] \norm{\x}_{\pvar;[0,T]}^p,
\]
where $r = \sup_{t \in [0,T]} d(\x_{t-},\x_t)$.
\end{lemma}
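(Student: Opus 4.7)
I would work throughout with $\widehat\x \in C([0,T+r],E)$ in place of $\x^\phi$, using invariance of $p$-variation under reparametrisation (Remark~\ref{rmk:welldefined}). Writing $J_k := [\tau(t_k-), \tau(t_k)]$, the construction of $\widehat\x$ guarantees that $\widehat\x|_{J_k}$ is an affine reparametrisation of $\phi(\x_{t_k-},\x_{t_k}) \in C([0,1],E)$, that $\widehat\x_{\tau(t)} = \x_t$ for all $t$, and that $\widehat\x_{\tau(t_k-)} = \x_{t_k-}$ by continuity. I abbreviate $\|\phi_k\| := \|\phi(\x_{t_k-},\x_{t_k})\|_{\pvar;[0,1]}$.

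For the lower bound I handle the two components of the maximum separately. Pulling any partition $\D$ of $[0,T]$ through $\tau$ produces a partition of $[0,T+r]$ whose vertices carry the values $\x_{t_i}$, yielding $\|\x\|_{\pvar;[0,T]}^p \leq \|\widehat\x\|_{\pvar;[0,T+r]}^p$. Since the $J_k$ are pairwise disjoint subintervals of $[0,T+r]$ and $\widehat\x|_{J_k}$ is a reparametrisation of $\phi(\x_{t_k-},\x_{t_k})$, super-additivity of $p$-variation over disjoint intervals gives $\sum_k\|\phi_k\|^p = \sum_k\|\widehat\x\|_{\pvar;J_k}^p \leq \|\widehat\x\|_{\pvar;[0,T+r]}^p$.

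For the upper bound I fix a partition $\D = (s_0 < \cdots < s_n)$ of $[0,T+r]$ and classify each $I_i = [s_i, s_{i+1}]$ as \emph{type A} (contained in a single $J_k$) or \emph{type B}. Grouped by $k$, the type-A contributions inside each $J_k$ sum to at most $\|\phi_k\|^p$, for an overall bound of $\sum_k \|\phi_k\|^p$. For a type-B $I_i$, let $k(i), k'(i)$ denote the first and last jumps whose $J_k$ meets $I_i$ (both absent in the pure-$\x$ case); set
\[
\sigma_i^L := \begin{cases} \tau(t_{k(i)}) & \text{if } s_i \in J_{k(i)}, \\ s_i & \text{otherwise,}\end{cases}
\qquad
\sigma_i^R := \begin{cases} \tau(t_{k'(i)}-) & \text{if } s_{i+1} \in J_{k'(i)}, \\ s_{i+1} & \text{otherwise.}\end{cases}
\]
Combining the triangle inequality with $(a+b+c)^p \leq 3^{p-1}(a^p+b^p+c^p)$ yields
\[
d(\widehat\x_{s_i}, \widehat\x_{s_{i+1}})^p \leq 3^{p-1}\bigl( d(\widehat\x_{s_i}, \widehat\x_{\sigma_i^L})^p + d(\widehat\x_{\sigma_i^L}, \widehat\x_{\sigma_i^R})^p + d(\widehat\x_{\sigma_i^R}, \widehat\x_{s_{i+1}})^p \bigr).
\]
The entry term is either zero or an increment within $J_{k(i)}$, and since at most one type-B index $i$ has $s_i \in J_k$ for each $k$, the entry sum is bounded by $\sum_k \|\phi_k\|^p$; the exit sum is treated symmetrically. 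The middle term has the form $d(\x_{a_i}, \x_{b_i^\pm})^p$ with $a_i, b_i \in [0,T]$, and the intervals $[a_i, b_i]$ are pairwise disjoint in $[0,T]$ (meeting at most at shared boundaries, either continuity points or jump times). Combining super-additivity of $p$-variation with the elementary bound $d(\x_a, \x_{b-})^p \leq \|\x\|_{\pvar;[a,b]}^p$ (since $\x_{b-}$ is a limit of $\x_{b-\varepsilon}$) yields $\sum_i d(\x_{a_i}, \x_{b_i^\pm})^p \leq \|\x\|_{\pvar;[0,T]}^p$. Collecting constants produces the first inequality with coefficients bounded by the claimed $R = 1 + 2^p + 3^{p-1}$ and $R + 3^{p-1}$.

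The second inequality is then immediate: since $d(\x_{t_k-},\x_{t_k}) \leq r$ for every jump, the modulus bound gives $\|\phi_k\|^p \leq \eta_{\pvar}(r)^p d(\x_{t_k-},\x_{t_k})^p$, and the standard estimate $\sum_k d(\x_{t_k-},\x_{t_k})^p \leq \|\x\|_{\pvar;[0,T]}^p$ (obtained by approximating each jump by a neighbouring partition point) lets one substitute directly. The main technical obstacle lies in the bookkeeping for the upper bound: a single type-B subinterval may traverse arbitrarily many jumps, and one must verify that the resulting middle increments -- whose endpoints may be left-limit values $\x_{b-}$ not directly captured by the partition-based definition of $\|\x\|_{\pvar;[0,T]}$ -- assemble into a valid disjoint collection in $[0,T]$. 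The definitions of $\sigma_i^L, \sigma_i^R$ and the super-additivity argument together are precisely what make this assembly work.
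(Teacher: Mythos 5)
The paper itself gives no proof of this lemma---it is quoted verbatim from~\cite{Chevyrev15} (Lemma~A.5)---so there is no in-paper argument to compare against; I assess your proof on its own. It is correct and complete. The reduction to $\widehat\x$ on $[0,T+r]$ is clean; both halves of the lower bound follow exactly as you say (pulling partitions through $\tau$, resp.\ super-additivity over the disjoint $J_k$). For the upper bound, the split of each type-B increment into entry / middle / exit via $\sigma_i^L,\sigma_i^R$, the observation that at most one type-B interval has $s_i\in J_k$ (resp.\ $s_{i+1}\in J_k$) for each fixed $k$, and the ordering $\sigma_i^R\le s_{i+1}\le s_{i'}\le\sigma_{i'}^L$ guaranteeing essential disjointness of the middle intervals in $[0,T]$, together yield
\[
\|\widehat\x\|_{\pvar;[0,T+r]}^p \;\le\; 3^{p-1}\,\norm{\x}_{\pvar;[0,T]}^p \;+\; (1+2\cdot 3^{p-1})\sum_k\norm{\phi(\x_{t_k-},\x_{t_k})}_{\pvar;[0,1]}^p,
\]
which is in fact \emph{sharper} than the stated constants (since $3^{p-1}\le R$ and $1+2\cdot 3^{p-1}\le R+3^{p-1}$), so the claimed bound follows a fortiori; you flag this correctly. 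The second inequality is then immediate from the modulus bound and $\sum_k d(\x_{t_k-},\x_{t_k})^p\le\norm{\x}_{\pvar}^p$. One point worth spelling out slightly further in a polished write-up: the use of $d(\x_{a_i},\x_{b_i-})^p\le\norm{\x}_{\pvar;[a_i,b_i]}^p$ followed by super-additivity over intervals that may share an endpoint is standard, but it is exactly the step where the left-limit values enter, and stating the super-additivity inequality explicitly would make the bookkeeping airtight.
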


\begin{definition}\label{def:endpointCont}
A path function $\phi : J \to C([0,1], E)$ is called endpoint continuous if
\begin{enumerate}
\item $(x,x) \in J$ whenever $(x,y) \in J$,

\item $\phi(x,x) \equiv x$ for all $(x,x) \in J$, and

\item $\phi$ is continuous with $C([0,1],E)$ equipped with the uniform topology.
\end{enumerate}
Moreover, we say that a function $\eta_\infty : [0,\infty) \rightarrow [0,\infty)$ is a uniform modulus of $\phi$ if $\eta_\infty(r) \geq r$ for all $r \geq 0$, $\lim_{r \rightarrow 0} \eta_\infty(r) = \eta_\infty(0)  = 0$, and for all $(x,y), (\bar x,\bar y) \in J$
\[
d_{\infty;[0,1]}\left(\phi(x,y), \phi(\bar x, \bar y)\right) \leq \eta_\infty(\max\{d(x,\bar x), d(y,\bar y)\}).
\]
\end{definition}

\begin{remark}\label{remark:unifMod}
In general, it is hard to find an explicit uniform modulus of a path function (or even show that one exists). But evidently if $\phi$ is restricted to $J \cap (K \times K)$ for a compact $K \subseteq E$, then a uniform modulus exists whenever $\phi$ is endpoint continuous.
\end{remark}

\begin{example}
Let $\phi$ be the log-linear path function on $G^N(\R^d)$. Then clearly $\phi$ is endpoint continuous and there exists a constant $C \geq 1$ such that for all $p \geq N$ and $x,y \in G^N(\R^d)$
\[
\norm{\phi(x,y)}_{\pvar;[0,1]} \leq C d(x,y),
\]
so that the constant $C$ is a $p$-variation modulus of $\phi$.
\end{example}

\begin{lemma}\label{lem:alphSigma}
Suppose $\phi$ has a uniform modulus $\eta_\infty$. Then for all $\x,\bar\x \in \DD_\phi([0,T],E)$, it holds that $\alpha_{\infty}(\x,\bar\x) \leq \eta_\infty(\sigma_\infty(\x,\bar\x))$.
\end{lemma}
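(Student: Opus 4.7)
The plan is to take a near-optimal Skorokhod reparametrisation $\lambda \in \Lambda_{[0,T]}$ witnessing $\sigma_\infty(\x, \bar\x) < \epsilon$ and lift it to a reparametrisation $\widehat\lambda$ of the expanded interval $[0, T + \delta R]$ on which the continuous interpolations $\widehat\x^\delta, \widehat{\bar\x}^\delta$ live (with $R = \sum_k r_k$). Once this construction yields $d_\infty(\widehat\x^\delta \circ \widehat\lambda, \widehat{\bar\x}^\delta) \leq \eta_\infty(\epsilon)$ and $|\widehat\lambda| \leq \epsilon$, the common linear rescaling back to $[0,T]$ preserves $\sigma_\infty$, giving $\sigma_\infty(\x^{\phi,\delta}, \bar\x^{\phi,\delta}) \leq \eta_\infty(\epsilon)$; sending $\delta \to 0$ yields $\alpha_\infty(\x, \bar\x) \leq \eta_\infty(\epsilon)$, and finally $\epsilon \downarrow \sigma_\infty(\x, \bar\x)$ (replacing $\eta_\infty$ by its right-continuous envelope if necessary, which remains a uniform modulus) concludes.

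To construct $\widehat\lambda$, first note that taking one-sided limits in $d_\infty(\x \circ \lambda, \bar\x) < \epsilon$ gives $d(\x_{\lambda(s)\pm}, \bar\x_{s\pm}) \leq \epsilon$ for every $s \in [0,T]$, and in particular jumps of $\bar\x$ of size exceeding $2\epsilon$ are in bijection, via $\lambda$, with jumps of $\x$. Invoking the freedom from Lemma~\ref{lem:limExists} (independence of $\alpha_\infty$ from the choice and indexing of $(r_k)$), I enumerate the insertion times jointly via a common sequence $(u_k)$ running over the union of the jump times of $\bar\x$ and of $\x \circ \lambda$, placing the $k$-th insertion of length $\delta r_k$ at $u_k$ for $\bar\x$ and at $\lambda(u_k)$ for $\x$. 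Define $\widehat\lambda : [0, T+\delta R] \to [0, T+\delta R]$ to be the translation by $\lambda(u_k) - u_k$ on the $k$-th inserted interval, and $\tau^\x_r \circ \lambda \circ (\tau^{\bar\x}_r)^{-1}$ on the complement, where $\tau^\x_r, \tau^{\bar\x}_r$ denote the respective insertion time-changes. Continuity at the gluing points follows from the identity $\tau^\x_r(\lambda(u_k)\pm) - \tau^{\bar\x}_r(u_k\pm) = \lambda(u_k) - u_k$, and a direct check gives $|\widehat\lambda| = \sup_s |\lambda(s) - s| < \epsilon$.

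The uniform bound $d_\infty(\widehat\x^\delta \circ \widehat\lambda, \widehat{\bar\x}^\delta) \leq \eta_\infty(\epsilon)$ then splits into two regimes. On original segments the identification reduces to $d(\x_{\lambda(s)}, \bar\x_s) < \epsilon$; on each matched inserted interval, the two paths traverse $\phi(\x_{\lambda(u_k)-}, \x_{\lambda(u_k)})$ and $\phi(\bar\x_{u_k-}, \bar\x_{u_k})$ at the same rescaled parameter, so the uniform modulus applied with endpoint distances at most $\epsilon$ yields the bound $\eta_\infty(\epsilon)$. Since $\eta_\infty(\epsilon) \geq \epsilon$, both contributions fit under $\eta_\infty(\epsilon)$. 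The main obstacle is the coordination of "unmatched" jumps, where $\bar\x$ jumps at $u$ but $\x$ is continuous at $\lambda(u)$ (or vice versa): such jumps necessarily have size $\leq 2\epsilon$, and one handles them by pairing with a degenerate insertion $\phi(x,x)$ on the continuous side (valid whenever $(x,x) \in J$, as in the typical left-invariant setting where the defining subset $B$ contains the identity); the modulus then still delivers the bound $\eta_\infty(\epsilon)$, closing the estimate.
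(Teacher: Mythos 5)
Your proposal is, in outline, exactly the argument the paper itself only sketches: take a near-optimal Skorokhod reparametrisation $\lambda$, lift it to a reparametrisation of the interpolated paths, and control the inserted segments by the uniform modulus. The technical core of your write-up is sound: the gluing identity $\tau^\x(\lambda(u_k)\pm)-\tau^{\bar\x}(u_k\pm)=\lambda(u_k)-u_k$ is correct (the indicator sums agree because $\lambda$ is increasing), it does give $|\widehat\lambda|=\sup_s|\lambda(s)-s|$, and on matched insertion intervals the modulus applied to the two jump pairs (both in $J$ since $\x,\bar\x\in\DD_\phi$) yields $\eta_\infty(\epsilon)$, with $\epsilon\le\eta_\infty(\epsilon)$ absorbing the rest.

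Two steps need patching. First, the appeal to Lemma~\ref{lem:limExists} does not license what you actually do. That lemma (together with Remark~\ref{rmk:xPhiwelldefined}) lets you change the series $(r_k)$, and hence reassign lengths among the actual jumps, since any positive summable assignment read off in jump-size order is again an admissible series; but it says nothing about inserting fictitious intervals at times where the path has \emph{no} jump. Your jointly-enumerated interpolations contain constancy segments at the other path's unmatched jump times, so they are not of the form $\x^{\phi,\delta}$, and before concluding you need a short extra comparison, e.g.\ that $\sigma_\infty$ between the modified and the canonical interpolation tends to $0$ with $\delta$ (squeeze each constancy interval into a small window around the corresponding continuity point and use continuity of $\x^{\phi,\delta}$ there; the time distortion is $O(\delta)$), followed by the triangle inequality; alternatively, drop the degenerate insertions and let $\widehat\lambda$ map an unmatched insertion interval of $\bar\x^{\bar\phi,\delta}$ onto an arbitrarily small window of the $\x^{\phi,\delta}$ axis. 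Second, your treatment of unmatched jumps needs $(x,x)\in J$ with $\phi(x,x)\equiv x$, which the bare hypothesis ``$\phi$ has a uniform modulus'' does not supply (and without some such degeneracy in $J$ the stated inequality can actually fail: a family of jumps $(y_n,z_n)\to(y^*,y^*)$ whose interpolations are translates of one fixed large loop admits the modulus $\eta_\infty(r)=3r$ yet has $\alpha_\infty$ bounded below). You flag this honestly, and the paper's terse proof silently faces the same issue; in every application (Proposition~\ref{prop:SkorkhodAlphaMap}) $\phi$ is endpoint continuous, and conditions (1)--(2) of Definition~\ref{def:endpointCont} give $(\bar\x_{u-},\bar\x_{u-})\in J$ with $\phi(\bar\x_{u-},\bar\x_{u-})\equiv\bar\x_{u-}$, which handles unmatched jumps with the (marginally weaker but entirely sufficient) bound $\eta_\infty(2\epsilon)+\epsilon$ and no assumption on the other path's value. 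With these two repairs your argument is complete and coincides with the paper's intended proof.
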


\begin{proof}
Suppose there exists $\lambda\in\Lambda$ such that $|\lambda| < r$ and $d_\infty(\x,\bar\x \circ \lambda) < r$.
Then for all $\delta > 0$ sufficiently small there exists $\lambda_\delta \in \Lambda$ such that $|\lambda_\delta| < r$ and $d_\infty(\x^{\phi,\delta},\bar\x^{\phi,\delta}\circ\lambda_\delta) < \eta_\infty(r)$, and the conclusion follows.
\end{proof}

\section{Canonical RDEs driven by general rough paths}
\label{sec:canonicalRDEs}

To ease notation, we assume throughout this section that all path spaces, unless otherwise stated, are defined on the interval $[0,T]$ and take values in $G^N(\R^d)$. For example $\DD^{\pvar}$ will be shorthand for $\DD^{\pvar}([0,T],G^N(\R^d))$.

\subsection{Notion of solution}
\label{subsec:RDESolution}

Following the notation of Section~\ref{subsec:RPs}, let $1 \leq p < N+1$ and fix a family of vector fields $V = (V_1,\ldots, V_d)$ in $\Lip^{\gamma+m-1}(\R^e)$ for some $\gamma > p$ and $m \geq 1$. For $\x \in D^{\pvar}$, we would like to solve the RDE
\begin{equation*}
\textnormal{``} dy_t = V(y_t)d\x_t \textnormal{''} .
\end{equation*}
Our notion of solution to this equation will depend on a path function $\phi$ defined on a subset $J \subseteq G^N(\R^d) \times G^N(\R^d)$, and therefore the fundamental input to an RDE will be a pair $(\x,\phi) \in \DD^{\pvar}$. 

\begin{definition}[Canonical RDE]\label{def:canonicalRDE}
Consider $(\x,\phi) \in \DD^{\pvar}$ and let $\tilde y \in C^{\pvar}([0,T],\R^e)$ be the solution to the continuous RDE
\[
d\tilde y_t = V(\tilde y_t)d\x_t^\phi, \; \; \tilde y_0 = y_0 \in \R^e.
\]
We define the solution $y \in D^{\pvar}([0,T],\R^e)$ to the canonical RDE
\begin{equation}\label{eq:cadlagRDE}
d y_t = V(y_t)\diamond d(\x_t,\phi), \; \; y_0 \in \R^e,
\end{equation}
by $y = \tilde y \circ \tau_\x$ (where $\tau_\x$ is given by~\eqref{eq:timeChange}).

In the particular case that $\phi$ is the log-linear path function from Example~\ref{ex:logLinear}, we denote the RDE simply by
\[
dy_t = V(y_t) \diamond d\x_t, \; \; y_0 \in \R^e.
\]
\end{definition}

\begin{remark} While the continuous RDE solution $\tilde y$ clearly depends (up to reparametrization) on the choice of representative $(\x,\phi) \in \DD^{\pvar}$ as well as the choice of $(r_k)$, it is easy to see that $y$ is independent of these choices, and is therefore well-defined on $\DD^{\pvar}$.
\end{remark}

\begin{remark}
Observe that every continuity point $t$ of $\x$ is also a continuity point $\tau_\x$, and is therefore also a continuity point of $y$.
\end{remark}

\begin{remark}
For the log-linear path function $\phi$, the solution $y$ agrees precisely with the solution to the rough canonical equation considered in~\cite[Def.~37, Thm.~38]{FrizShekhar17}. Furthermore, we shall see in Section~\ref{sec:cadlagSemimart} that all semimartingales admit a canonical lift to a c{\`a}dl{\`a}g geometric $p$-rough path, and that, for the log-linear path function, the solution $y$ agrees with the Marcus solution of the associated SDE (see Proposition~\ref{prop:Marcus} below).
\end{remark}

\subsection{Skorokhod-type $p$-variation metric}

We now introduce a metric $\alpha_{\pvar}$ on $\DD^{\pvar}$ for which the RDE solution map is locally Lipschitz continuous. We first define an auxiliary metric $\sigma_{\pvar}$ on $D^{\pvar}$ which is independent of any path function. Recall the inhomogeneous $p$-variation metric $\rho_{\pvar}$ from Section~\ref{subsec:RPs}.

\begin{definition}
For $p \geq 1$ and $\x,\bar\x \in D^{\pvar}$, define
\[
\sigma_{\pvar}(\x,\bar\x) = \inf_{\lambda \in \Lambda} \max\{|\lambda|, \rho_{\pvar}(\x \circ \lambda, \bar\x)\}.
\]
\end{definition}

\begin{remark}[Topologies induced by $\sigma_{\pvar}$ and $\rho_{\pvar}$]\label{remark:sigmaRho}
Note that $\sigma_{\onevar}$ and $\rho_{\onevar}$ induce the same topology on $C^{0,\onevar}$. Indeed, it is sufficient to show that $\rho_{\onevar}(\x,\x \circ \lambda^n) \rightarrow 0$ for all $\x \in C^{0,\onevar}$ and $|\lambda^n| \rightarrow 0$, which follows from writing $\x_t = \int_0^t \dot{\x}_s ds$ and applying dominated convergence.

Furthermore, for $p' > p \geq 1$, $\sigma_{\pprimevar}$ and $\rho_{\pprimevar}$ induce the same topology on $C^{\pvar}$. Indeed, it again suffices to show that $\rho_{\pprimevar}(\x,\x \circ \lambda^n) \rightarrow 0$ for all $\x \in C^{\pvar}$ and $|\lambda^n| \rightarrow 0$, which follows from $d_{\infty;[0,T]}(\x, \x\circ \lambda^n) \rightarrow 0$ and interpolation~\cite[Lem.~8.16]{FrizVictoir10}.

However, note that $\sigma_{\onevar}$ and $\rho_{\onevar}$ do not induce the same topology on $C^{\onevar}$. This can be seen from the fact that $C^{0,\onevar}$ is dense in $C^{\onevar}$ under $\sigma_{\onevar}$ (see Proposition~\ref{prop:properties} part~\ref{point:C3}), or from the following direct example: consider the $\R$-valued Cantor function $\x_t = \mu([0,t])$, where $\mu$ is the Cantor distribution, and shifts $\x^n_t = \x_{t-\alpha_n}$ (with $\x^n_t = 0$ for $t \in [0,\alpha_n]$), where $\alpha_n \downarrow 0$. Clearly $\sigma_{\onevar}(\x,\x^n) \leq \alpha_n \vee \norm{\x}_{\onevar;[0,\alpha_n]} \rightarrow 0$. However, choosing $\alpha_n$ irrational, one can show that $\mu$ and $\mu(\cdot - \alpha_n)$ are mutually singular measures (see, e.g.,~\cite{DavisHu95}),
so that $\rho_{\onevar}(\x,\x^n) = \norm{\x^n}_{\onevar;[0,1]} + \norm{\x}_{\onevar;[0,1]} \rightarrow 2$.
\end{remark}

We note that for the case $1 = N \leq p < 2$, the metric $\sigma_{\pvar}$ already appears in the works of Simon~\cite{Simon03} and Williams~\cite{Williams98} where in particular a continuity statement for RDE solutions in the Young regime appears in terms of $\sigma_{\pvar}$ (cf. Remark~\ref{remark:suboptimal}).

A drawback of the metric $\sigma_{\pvar}$ is that the space of continuous rough paths $C^{\pvar}$ is closed under $\sigma_{\pvar}$. In particular, this implies that $\sigma_{\pvar}$ is unable to describe situations in which continuous drivers approximate a discontinuous one (e.g. the Wong--Zakai theorem in~\cite{KPP95}). We are thus motivated to introduce the following metric whose relation with $\sigma_{\pvar}$ is analogous to that of $\alpha_{\infty}$ with $\sigma_{\infty}$.

\begin{definition}
For $(\x,\phi), (\bar\x,\bar\phi) \in \DD^{\pvar}$, define the metric
\begin{equation}\label{eq:defAlphaPVar}
\alpha_{\pvar}(\x,\bar\x) = \lim_{\delta \rightarrow 0} \sigma_{\pvar}(\x^{\phi,\delta},\bar\x^{\bar\phi,\delta}),
\end{equation}
where $\x^{\phi,\delta}$ is defined as at the start of Section~\ref{subsec:SkorSM1}.
\end{definition}

\begin{remark}
Note that the limit~\eqref{eq:defAlphaPVar} exists, is independent of the choice of series $\sum_{k=1}^\infty r_k$, and induces a well-defined metric on $\DD^{\pvar}$, all of which follows from the same argument as in Lemma~\ref{lem:limExists} and Remark~\ref{remark:goodCondition}.
\end{remark}

\begin{remark}
In light of Remark~\ref{remark:sigmaRho}, it may seem possible to define an equivalent topology as that induced by $\alpha_{\pvar}$ (at least on $C^{0,\onevar}$) by replacing $\sigma_{\pvar}$ by $\rho_{\pvar}$ in~\eqref{eq:defAlpha} for the definition of $\alpha_{\pvar}$ (and thus avoid introducing $\sigma_{\pvar}$ altogether). However one can readily check that doing so will induce a completely different topology even on $C^{0,\onevar}$ (in fact the same remark applies to replacing $\sigma_\infty$ by $d_\infty$ when defining $\alpha_\infty$ in Definition~\ref{def:alphaInfty}).
\end{remark}

We record several basic properties of the metric space $(\DD^{\pvar}, \alpha_{\pvar})$. For a path function $\phi$, let $\DD_\phi^{0,\pvar} = \DD_\phi^{0,\pvar}([0,T],G^N(\R^d))$ denote the closure of $C^{0,\onevar}$ in the metric space $(\DD_\phi^{\pvar}, \alpha_{\pvar})$.

\begin{proposition}\label{prop:properties}
Let $p \geq 1$ and $\phi$ a path function defined on a subset $J \subseteq G^N(\R^d)\times G^N(\R^d)$.

\begin{enumerate}[label=\upshape(\roman*\upshape)]
\item \label{point:C1} The space $(\DD^{0,\pvar}_\phi, \alpha_{\pvar})$ is a separable metric space.

\item \label{point:C2} It holds that $C^{\pvar}$ is dense in $(\DD_\phi^{\pvar}, \alpha_{\pvar})$.

\item \label{point:C3} It holds that $\DD_\phi^{0,\onevar} = \DD_\phi^{\onevar}$.

\item \label{point:C4} If $p > 1$, the closure of $C^{0,\onevar}$ in $(C^{\pvar}, \sigma_{\pvar})$ is precisely $C^{0,\pvar}$. In particular, $\DD_\phi^{0,\pvar} \subsetneq \DD_\phi^{\pvar}$.

\item \label{point:C5} For every $p' > p$, $\DD_\phi^{\pvar} \subsetneq \DD_\phi^{0,\pprimevar}$.
\end{enumerate}
\end{proposition}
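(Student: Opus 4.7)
The proof splits along the five enumerated parts, unified by the observation that for any continuous $\y \in C^{\pvar}$ one has $\y^{\phi,\delta} = \y$ (no jumps to insert), so $\alpha_{\pvar}$ restricted to $C^{\pvar}$ coincides with $\sigma_{\pvar}$. This reduces several of the density questions to the $\sigma_{\pvar}$-topology on continuous paths.

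For (ii), the natural approximants of $(\x,\phi) \in \DD^{\pvar}_\phi$ are $\y^n := \x^{\phi,1/n} \in C^{\pvar}$. By the observation above, $\alpha_{\pvar}(\y^n,(\x,\phi)) = \lim_{\epsilon\to 0}\sigma_{\pvar}(\x^{\phi,1/n}, \x^{\phi,\epsilon})$. Both $\x^{\phi,1/n}$ and $\x^{\phi,\epsilon}$ arise by the insert-and-linearly-rescale construction from the same underlying auxiliary path $\widehat\x$, differing only in their insertion widths, and are therefore exact reparametrizations of one another. Taking the reparametrization $\lambda_{n,\epsilon}$ that makes them coincide kills the $\rho_{\pvar}$-part, and a direct estimate gives $|\lambda_{n,\epsilon}| = O(r/n + \epsilon r)$ with $r=\sum r_k$, so $\epsilon \to 0$ then $n\to\infty$ yields density. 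For (i), a countable $\rho_{\pvar}$-dense subset of $C^{0,\onevar}$ (e.g.\ piecewise $G^N$-geodesic paths with dyadic-rational breakpoints and endpoints in a countable dense subset of $G^N(\R^d)$) stays dense in $C^{0,\onevar}$ under the weaker topology $\alpha_{\pvar}|_{C^{0,\onevar}} \leq \rho_{\pvar}|_{C^{0,\onevar}}$, and hence is dense in the $\alpha_{\pvar}$-closure $\DD_\phi^{0,\pvar}$.

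For (iv), the inclusion $C^{0,\pvar} \subseteq$ ($\sigma_{\pvar}$-closure of $C^{0,\onevar}$) follows from $\sigma_{\pvar} \leq \rho_{\pvar}$ together with the $\rho_{\pvar}$-density of smooth paths in $C^{0,\pvar}$. Conversely, $C^{0,\pvar}$ is characterised by continuity of the control $\omega(s,t)=\norm{\x}_{\pvar;[s,t]}^p$, invariant under continuous reparametrization, so if $\x^n \in C^{0,\onevar}$ and $\rho_{\pvar}(\x^n\circ\lambda^n, \x) \to 0$ with $|\lambda^n|\to 0$, then each $\x^n\circ\lambda^n$ lies in the $\rho_{\pvar}$-closed subspace $C^{0,\pvar}$, and so does $\x$. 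The strict inclusion then follows from the strict inclusion $C^{0,\pvar}\subsetneq C^{\pvar}$ for $p>1$. For (v), combine (ii) at level $p$ (noting $\alpha_{\pvar}\geq\alpha_{\pprimevar}$) with standard rough-path interpolation: piecewise geodesic approximants converge to any $\x \in C^{\pvar}$ in $\rho_{\pprimevar}$ for $p'>p$ (cf.~\cite[Lem.~8.16]{FrizVictoir10}), and $\sigma_{\pprimevar}$, $\rho_{\pprimevar}$ induce the same topology on $C^{\pvar}$ by Remark~\ref{remark:sigmaRho}; diagonalization finishes the approximation, and strictness is immediate since $C^{\pprimevar}\supsetneq C^{\pvar}$.

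Part (iii) is the main technical obstacle. By (ii) it reduces to showing that $C^{0,\onevar}$ is $\sigma_{\onevar}$-dense in $C^{\onevar}$, which is not automatic because the $\rho_{\onevar}$-closure of $C^{0,\onevar}$ in $C^{\onevar}$ is only $C^{0,\onevar}$ itself (witnessed already by the $\R$-valued Cantor staircase, which lies in $C^{\onevar}\setminus C^{0,\onevar}$). The density must therefore be extracted from the reparametrization freedom built into $\sigma_{\onevar}$. The prototype is the Cantor function $C$: its piecewise linear triadic interpolant $\x^n$ admits an explicit near-bijection $\lambda_n$ with $|\lambda_n|\leq 3^{-n}$ and $\x^n \circ \lambda_n = C$, yielding $\sigma_{\onevar}(\x^n, C)\to 0$ after an arbitrarily small perturbation enforcing strict monotonicity. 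I would extend this to arbitrary $\y \in C^{\onevar}$ by writing $\y = \bar\y \circ \mu$ with $\bar\y$ 1-Lipschitz (arc-length parametrization) and $\mu$ continuous non-decreasing, and building AC approximants $\y^n = \bar\y \circ \mu^n$ from carefully chosen piecewise linear $\mu^n$; the technical heart is arranging the matching reparametrization $\y\circ\lambda_n = \y^n$ to satisfy $|\lambda_n|\to 0$ even when $\mu$ has a singular continuous component.
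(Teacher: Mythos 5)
Your parts (i), (ii) and (v) are correct and essentially identical to the paper's proof (insert-and-shrink approximants $\x^{\phi,\delta}$, transfer of separability from $\rho_{\pvar}$ to the weaker metric, interpolation), and your (iv) has the same structure as the paper's. The genuine gap is in (iii), which you correctly identify as the crux but do not actually prove. After the (correct) reduction, via (ii), to showing that $C^{0,\onevar}$ is $\sigma_{\onevar}$-dense in $C^{\onevar}$, your plan is to write $\y=\bar\y\circ\mu$ with $\bar\y$ Lipschitz, pick piecewise linear $\mu^n$ approximating $\mu$, and then ``arrange'' a reparametrization $\lambda_n\in\Lambda$ with $|\lambda_n|\to 0$ and $\y\circ\lambda_n=\y^n$; but this matching is precisely the unresolved difficulty, not a technicality. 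If $\mu^n$ is chosen first, exact matching $\mu\circ\lambda_n=\mu^n$ is in general impossible (the level sets of $\mu$ and $\mu^n$ are incompatible), and approximate matching does not suffice because uniform closeness of time changes gives no control on $\rho_{\onevar}(\y\circ\lambda_n,\y^n)$: $\onevar$-distance is a total-variation-type quantity, not dominated by sup-distance. Even your Cantor prototype already leans on an unjustified step of exactly this kind (``after an arbitrarily small perturbation enforcing strict monotonicity''), which happens to be repairable there only because of the explicit piecewise linear structure of the interpolant. The paper closes the general case with a short trick that your sketch is missing: choose the reparametrization \emph{first}, by blending the identity with the path's variation clock. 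Concretely, with $v(t):=\norm{\x}_{\onevar;[0,t]}$ set $g_\delta(t):=T\bigl(t+\delta v(t)\bigr)/\bigl(T+\delta v(T)\bigr)\in\Lambda$; then $g_\delta(t')-g_\delta(t)\ge \tfrac{T\delta}{T+\delta v(T)}\bigl(v(t')-v(t)\bigr)$, so $\x\circ g_\delta^{-1}$ is Lipschitz for the CC metric, hence lies in $C^{\oneHol}\subset C^{0,\onevar}$, while $|g_\delta-\id|\le \delta\, v(T)$, whence $\sigma_{\onevar}\bigl(\x,\x\circ g_\delta^{-1}\bigr)\le |g_\delta-\id|\to 0$ as $\delta\to 0$. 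This is what the paper's terse statement ``any $\x\in C^{\onevar}$ can be reparametrized to be in $C^{\oneHol}\cong L^\infty$, and thus lies in $C^{0,\onevar}$'' is alluding to; without the ``close to the identity'' feature of the reparametrization, the Lipschitz reparametrization alone does not give $\sigma_{\onevar}$-density.

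A second, smaller issue: in (iv) you claim that $C^{0,\pvar}$ is ``characterised by continuity of the control $\omega(s,t)=\norm{\x}_{\pvar;[s,t]}^p$''. This is false: for \emph{every} continuous path of finite $p$-variation this function is already a control, in particular continuous (cf.~\cite{FrizVictoir10}), so continuity of the control cannot distinguish $C^{0,\pvar}$ inside $C^{\pvar}$. The conclusion you need is nevertheless true and easily repaired: either use Wiener's characterization of $C^{0,\pvar}$, which is visibly invariant under reparametrization (this is the paper's route), or simply observe that your approximants $\x^n\circ\lambda^n$ are continuous of finite $1$-variation and hence lie in $C^{0,\pvar}$ for $p>1$, which is $\rho_{\pvar}$-closed by definition. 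So (iv) is a repairable misstatement, whereas (iii) as written is a genuine gap.
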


\begin{proof}
\ref{point:C1} Recall that $(C^{0,\onevar},\rho_{\pvar})$ is a separable space, and therefore so is $(C^{0,\onevar},\sigma_{\pvar})$ (see Remark~\ref{remark:sigmaRho}). Since the metrics $\sigma_{\pvar}$ and $\alpha_{\pvar}$ coincide on $C^{0,\onevar}$, it follows that $\DD_\phi^{0,\pvar}$ is separable.

\ref{point:C2} For every $\x \in \DD_\phi^{\pvar}$ and $\delta > 0$, it holds that $\x^{\phi, \delta} \in C^{\pvar}([0,T],G^N(\R^d))$. One can readily see that $\lim_{\delta \rightarrow 0} \alpha_{\pvar}(\x, \x^{\phi, \delta}) = 0$, from which the claim follows.

\ref{point:C3} By point~\ref{point:C2}, it suffices to show that $C^{0,\onevar}$ is dense in $C^{\onevar}$ under $\sigma_{\onevar}$. This in turn follows from the fact that any $\x \in C^{\onevar}([s,t],G^N(\R^d))$ can be reparametrized to be in $C^{\oneHol}([s,t],G^N(\R^d)) \cong L^\infty([s,t],\R^d)$ (where the isometry is via the weak derivative) and thus lies in $C^{0,\onevar}([s,t],G^N(\R^d)) \cong L^1([s,t],\R^d)$.

\ref{point:C4} Recall by Wiener's characterization~\cite[Thm.~8.22]{FrizVictoir10} (which relies on $p>1$) that $\x \in C^{0,\pvar}$ if and only if 
\[
\lim_{\delta \rightarrow 0} \sup_{|\D|<\delta} \sum_{t_i \in \D} \norm{\x}_{\pvar;[t_i,t_{i+1}]}^p = 0.
\]
As a consequence, for any $\lambda \in \Lambda$, it holds that $\x \circ \lambda \in C^{0,\pvar}$ if and only if $\x \in C^{0,\pvar}$. Therefore, for any sequences $(\x^n)_{n \geq 1}$ in $C^{0,\pvar}$ and $(\lambda^n)_{n \geq 1}$ in $\Lambda$ for which $\rho_{\pvar}(\x,\x^n\circ\lambda_n) \rightarrow 0$, it holds that $\x \in C^{0,\pvar}$, from which the conclusion follows.

\ref{point:C5} Since $C^{\pvar} \subsetneq C^{0,\pprimevar}$, the conclusion follows from~\ref{point:C2}.
\end{proof}

We now record an interpolation estimate which will be helpful later. It turns out to be simpler to state in terms of a homogeneous version of the distance $\alpha_{\pvar}$. See~\cite[Ch.~8]{FrizVictoir10} for the definition and basic properties of the metrics $d_0$ and $d_{\pvar}$. For $(\x,\phi),(\bar\x,\bar\phi) \in \DD^{\pvar}$ we define
\[
\beta_{\pvar}(\x,\bar \x) = \lim_{\delta \rightarrow 0} \inf_{\lambda \in \Lambda} |\lambda| \vee d_{\pvar}(\x^{\phi,\delta} \circ \lambda, \bar\x^{\bar\phi,\delta})
\]
as well as
\[
\alpha_{0}(\x,\bar\x) = \lim_{\delta \rightarrow 0} \inf_{\lambda \in \Lambda} |\lambda| \vee d_{0}(\x^{\phi,\delta} \circ \lambda, \bar\x^{\bar\phi,\delta})
\]
(which are well-defined metrics on $\DD^{\pvar}$ by the same argument as in Lemma~\ref{lem:limExists} and Remark~\ref{remark:goodCondition}).
Observe that the $d_0/d_\infty$ estimate~\cite[Prop.~8.15]{FrizVictoir10}
implies
\begin{multline*}\label{eq:0infty}
\alpha_{\infty}(\x,\bar\x) \leq \alpha_{0}(\x,\bar\x) \leq C\max\big\{ \alpha_{\infty}(\x,\bar\x), \alpha_{\infty}(\x,\bar\x)^{1/N}\big( \|\x^\phi\|_\infty + \|\bar\x^{\bar\phi}\|_\infty \big)^{1-1/N} \big\}.
\end{multline*}
Moreover, to move from $\beta_{\pvar}$ to $\alpha_{\pvar}$, it holds that the identity map
\begin{equation*}\label{eq:alphaBeta}
\id : (\DD^{\pvar}, \beta_{\pvar}) \leftrightarrow (\DD^{\pvar}, \alpha_{\pvar})
\end{equation*}
is Lipschitz on bounded sets in the $\rightarrow$ direction, and $1/N$-H{\"o}lder on bounded sets in the $\leftarrow$ direction~\cite[Thm.~8.10]{FrizVictoir10}.

Finally, the following result now follows directly from the usual interpolation estimate for the homogeneous metric $d_{\pvar}$~\cite[Lem.~8.16]{FrizVictoir10}.

\begin{lemma}\label{lem:interBeta}
Let $1 \leq p < p'$. For all $(\x,\phi),(\bar\x,\bar\phi) \in \DD^{\pvar}$ it holds that
\[
\beta_{\pprimevar}(\x,\bar\x) \leq \big(\|\x^\phi\|_{\pvar} + \|\bar\x^{\bar\phi}\|_{\pvar}\big)^{p'/p} \alpha_{0}(\x,\bar\x)^{1-p'/p}.
\]
\end{lemma}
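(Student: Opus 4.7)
The plan is to transfer the classical rough-path interpolation estimate \cite[Lem.~8.16]{FrizVictoir10} to the level of the continuous surrogates $\x^{\phi,\delta}$ and $\bar\x^{\bar\phi,\delta}$, and then pass to the limit $\delta\to 0$. Concretely, for any $\delta>0$ and any $\lambda\in\Lambda$, the cited lemma applied to the continuous geometric $p$-rough paths $\x^{\phi,\delta}\circ\lambda$ and $\bar\x^{\bar\phi,\delta}$ (which really do lie in $C^{\pvar}$ by construction) yields the pointwise bound
\[
d_{\pprimevar}\bigl(\x^{\phi,\delta}\circ\lambda,\bar\x^{\bar\phi,\delta}\bigr) \leq \bigl(\|\x^{\phi,\delta}\circ\lambda\|_{\pvar} + \|\bar\x^{\bar\phi,\delta}\|_{\pvar}\bigr)^{p/p'}\, d_0\bigl(\x^{\phi,\delta}\circ\lambda,\bar\x^{\bar\phi,\delta}\bigr)^{1-p/p'}.
\]

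The next step is to eliminate the $\delta,\lambda$ dependence in the prefactor. Since $p$-variation is invariant under reparametrisation, and $\x^{\phi,\delta}$ is itself a reparametrisation of $\x^\phi$ (cf.\ Remark~\ref{rmk:xPhiwelldefined} and Remark~\ref{rmk:welldefined}), one has $\|\x^{\phi,\delta}\circ\lambda\|_{\pvar}=\|\x^\phi\|_{\pvar}$, and likewise for the barred path. Writing $M := \|\x^\phi\|_{\pvar}+\|\bar\x^{\bar\phi}\|_{\pvar}$, the bracket simplifies to $M$ for every $\delta$ and $\lambda$, leaving only the $d_0$ term to be controlled.

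To pass to $\beta_{\pprimevar}$ and $\alpha_0$, I would take the maximum with $|\lambda|$, then infimum over $\lambda\in\Lambda$, and finally the limit $\delta\to 0$. The nontrivial regime is $\alpha_0(\x,\bar\x)\leq M$ (otherwise the claim is a direct consequence of the triangle-type bound $\beta_{\pprimevar}\leq M$); here we may restrict to pairs $(\delta,\lambda)$ with $|\lambda|\vee d_0(\x^{\phi,\delta}\circ\lambda,\bar\x^{\bar\phi,\delta})\leq M$. For such choices, $|\lambda|\leq M=M^{p/p'}\cdot M^{1-p/p'}$, so the $|\lambda|$ contribution is absorbed into the right-hand side and one obtains
\[
|\lambda|\vee d_{\pprimevar}\bigl(\x^{\phi,\delta}\circ\lambda,\bar\x^{\bar\phi,\delta}\bigr) \leq M^{p/p'}\bigl(|\lambda|\vee d_0\bigl(\x^{\phi,\delta}\circ\lambda,\bar\x^{\bar\phi,\delta}\bigr)\bigr)^{1-p/p'}.
\]
Since $x\mapsto x^{1-p/p'}$ is monotone increasing on $[0,\infty)$, one may now take $\inf_{\lambda}$ followed by $\lim_{\delta\to 0}$ on both sides through the power, which produces $\beta_{\pprimevar}(\x,\bar\x)$ on the left and $M^{p/p'}\alpha_0(\x,\bar\x)^{1-p/p'}$ on the right.

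The argument is essentially formal once the key reduction---replacing the $\lambda$- and $\delta$-dependent prefactor by the single constant $M$---is in place. The one piece of bookkeeping that requires mild care, and the place I would be most vigilant, is that the infimum over $\lambda$ commutes with the monotone power $x\mapsto x^{1-p/p'}$ and that the $|\lambda|$ term does not spoil the interpolation; both are handled by the absorption step above. I do not anticipate any genuine obstacle beyond this.
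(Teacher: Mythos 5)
Your argument is correct and is essentially the paper's proof spelled out: the paper simply invokes the classical interpolation estimate for the homogeneous metric (\cite[Lem.~8.16]{FrizVictoir10}) applied to the continuous paths $\x^{\phi,\delta}\circ\lambda$ and $\bar\x^{\bar\phi,\delta}$, and the reparametrisation-invariance of $p$-variation together with the $|\lambda|$-absorption is exactly the bookkeeping you supply before passing to $\inf_{\lambda}$ and $\delta\to 0$. One remark: you obtain the exponents $p/p'$ and $1-p/p'$, whereas the statement displays $p'/p$ and $1-p'/p$; the latter is evidently a typo (with a negative exponent on $\alpha_0$ the right-hand side always exceeds $\|\x^\phi\|_{\pvar}+\|\bar\x^{\bar\phi}\|_{\pvar}$, so the stated bound would be vacuous), and your version is the intended and stronger one --- it is what Proposition~\ref{prop:SkorkhodAlphaMap} requires, and it implies the literal statement since $\alpha_{0}(\x,\bar\x)\leq \|\x^\phi\|_{\pvar}+\|\bar\x^{\bar\phi}\|_{\pvar}$.
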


As a consequence, we obtain the following useful embedding result.

\begin{proposition}\label{prop:SkorkhodAlphaMap}
Let $1 \leq p < p'$ and $\phi$ a $p$-approximating, endpoint continuous path function defined on a subset $J \subset G^N(\R^d) \times G^N(\R^d)$. Then the identity map
\[
\id : (\DD_\phi^\pvar, \sigma_{\infty}) \rightarrow (\DD_\phi^\pvar, \alpha_{\pprimevar})
\]
is uniformly continuous on sets of bounded $p$-variation.
\end{proposition}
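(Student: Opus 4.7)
The plan is to chain four quantitative estimates already supplied in the paper, transforming smallness of $\sigma_{\infty}(\x,\bar\x)$ into smallness of $\alpha_{\pprimevar}(\x,\bar\x)$ with a modulus depending only on the $p$-variation bound $M$, the approximation modulus $\eta_{\pvar}$, the path function $\phi$, $N$, and $p,p'$. Fix $M>0$ and let $\x,\bar\x \in \DD_\phi^{\pvar}$ satisfy $\norm{\x}_{\pvar}, \norm{\bar\x}_{\pvar} \leq M$. Since paths start at $1_{G^N(\R^d)}$, both take values in the compact CC-ball $K_M := \{z \in G^N(\R^d) : \norm{z} \leq M\}$. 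By Remark~\ref{remark:unifMod}, endpoint continuity of $\phi$ supplies a uniform modulus $\eta_\infty^M$ for $\phi$ restricted to $J\cap(K_M\times K_M)$, and Lemma~\ref{lem:alphSigma} gives the first step
\[
\alpha_{\infty}(\x,\bar\x) \leq \eta_\infty^M(\sigma_\infty(\x,\bar\x)).
\]

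Next, I would use the $p$-approximating property with modulus $\eta_{\pvar}$ together with Lemma~\ref{lem:pvarJumps}, noting $\sup_t d(\x_{t-},\x_t)\leq M$, to obtain a uniform bound
\[
\norm{\x^\phi}_{\pvar}^p \vee \norm{\bar\x^{\bar\phi}}_{\pvar}^p \leq \bigl[R+\eta_{\pvar}(M)^p(R+3^{p-1})\bigr]M^p =: L(M)^p,
\]
from which $\norm{\x^\phi}_\infty,\norm{\bar\x^{\bar\phi}}_\infty \leq L(M)$. Combined with the $d_0/d_\infty$ estimate displayed just before Lemma~\ref{lem:interBeta}, this yields
\[
\alpha_{0}(\x,\bar\x) \leq C\max\bigl\{\alpha_{\infty}(\x,\bar\x),\; \alpha_{\infty}(\x,\bar\x)^{1/N}(2L(M))^{1-1/N}\bigr\}.
\]
Interpolation via Lemma~\ref{lem:interBeta} then gives $\beta_{\pprimevar}(\x,\bar\x) \leq (2L(M))^{p/p'}\alpha_{0}(\x,\bar\x)^{1-p/p'}$, so $\beta_{\pprimevar}(\x,\bar\x)\to 0$ as $\sigma_\infty(\x,\bar\x)\to 0$, at a rate depending only on $M$. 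Finally, since $\norm{\x^\phi}_{\pvar},\norm{\bar\x^{\bar\phi}}_{\pvar}\leq L(M)$, both arguments lie in a bounded set of $(\DD^{\pprimevar},\beta_{\pprimevar})$, and the bounded-set $1/N$-H\"older equivalence between $\beta_{\pprimevar}$ and $\alpha_{\pprimevar}$ (stated after Lemma~\ref{lem:interBeta} via~\cite[Thm.~8.10]{FrizVictoir10}) converts this into $\alpha_{\pprimevar}(\x,\bar\x)\to 0$ at a rate depending only on $M$, which is exactly uniform continuity on the bounded $p$-variation class.

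The main obstacle I expect is bookkeeping the uniformity: the modulus $\eta_\infty^M$ from Remark~\ref{remark:unifMod} is only qualitative, but uniform continuity requires only existence of a modulus (not a rate), so this is harmless once $K_M$ is fixed. A secondary point to verify carefully is that the Hölder equivalence between $\beta_{\pvar}$ and $\alpha_{\pvar}$ is applied on sets of bounded norm; this must be justified via the explicit $L(M)$ bound from Lemma~\ref{lem:pvarJumps} rather than assumed a priori. Once these are in place, the chain $\sigma_\infty \to \alpha_\infty \to \alpha_0 \to \beta_{\pprimevar}\to \alpha_{\pprimevar}$ composes to a single modulus of continuity depending only on $M$, $\eta_{\pvar}$, $\eta_\infty^M$, $N$, $p$, and $p'$.
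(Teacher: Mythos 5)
Your proposal is correct and expands exactly the chain the paper invokes: Remark~\ref{remark:unifMod} for the uniform modulus on the compact CC-ball $K_M$, Lemma~\ref{lem:alphSigma} to pass from $\sigma_\infty$ to $\alpha_\infty$, Lemma~\ref{lem:pvarJumps} with the $p$-approximating modulus to bound $\norm{\x^\phi}_{\pvar}$ uniformly, the $d_0/d_\infty$ estimate to control $\alpha_0$, and Lemma~\ref{lem:interBeta} followed by the $\beta\leftrightarrow\alpha$ comparison. (Minor: the direction $\beta_{\pprimevar}\to\alpha_{\pprimevar}$ you need in the last step is the Lipschitz-on-bounded-sets direction stated after Lemma~\ref{lem:interBeta}, not the $1/N$-H\"older reverse direction; this only strengthens your conclusion, so the argument is unaffected.)
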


\begin{proof}
This is a combination of Remark~\ref{remark:unifMod} and Lemmas~\ref{lem:pvarJumps},~\ref{lem:alphSigma}, and~\ref{lem:interBeta}.
\end{proof}

\subsection{Continuity of the solution map}

An advantage of the metric $\alpha_{\pvar}$ is it allows us to directly carry over continuity statements about the classical (continuous) RDE solution map to the discontinuous setting. Recall the RDE~\eqref{eq:cadlagRDE}
\[
dy_t = V(y_t)\diamond d(\x_t,\phi), \; \; y_0 \in \R^e,
\]
which is well-defined for any admissible pair $(\x,\phi) \in \DD^{\pvar}$.

Consider the driver-solution space $E := G^N(\R^d) \times \R^e$. Given that $\phi$ is defined on $J \subset G^N(\R^d) \times G^N(\R^d)$, we obtain a natural path function $\phi_{(V)}$ defined on the following (necessarily strict)subset of $E \times E$
\[
W := \{ ((x,y),(\bar x,\bar y)) \mid (x,\bar x) \in J, \pi_{(V)}(0,y;\phi(x,\bar x))_1 = \bar y \}
\]
and which is given by
\[
\phi_{(V)}((x,y),(\bar x, \bar y))_t = (\phi(x,\bar x)_t, \pi_{(V)}(0,y;\phi(x,\bar x))_t),
\]
where we recall $\pi_{(V)}$ from Section~\ref{subsec:RPs} is the solution map for the (continuous) RDE driven along $V$.
Therefore, an admissible pair $(\x,\phi) \in \DD^{\pvar}$ yields an admissible pair $((\x,y), \phi_{(V)}) \in \DD^{\pvar}([0,T],E)$ as the solution to the RDE.
The following is now a consequence of Lyons' classical rough path universal limit theorem.

\begin{theorem}[Continuity of solution map]\label{thm:alphaMetric}
For vector fields $V=(V_1,\ldots, V_d)$ in $\Lip^{\gamma+m-1}(\R^e)$ with $\gamma > p$ and $m \geq 1$, the solution map of the RDE~\eqref{eq:cadlagRDE}
\begin{align*}
\R^e \times \left(\DD^{\pvar}, \alpha_{\pvar}\right) &\to \left( \DD^{\pvar}([0,T],E), \alpha_{\pvar} \right) \\
(y_0, (\x,\phi)) & \mapsto ((\x,y),\phi_{(V)})
\end{align*}
is locally Lipschitz. In particular, \begin{equation}\label{eq:ULTdriverConv}
\lim_{n \rightarrow \infty} |y_0^n - y_0| + \alpha_{\pvar}(\x^n,\x) = 0
\end{equation} 
implies that
\begin{align*}
\sup_n \norm{y^n}_{\pvar} < \infty, \; \; \textnormal{and} \; \; \lim_{n \rightarrow \infty} y^n_t = y_t \; \; \textnormal{for all continuity points $t$ of $\x$}.
\end{align*}
Furthermore the flow map
\begin{align*}
\big(\DD_\phi^{\pvar}, \alpha_{\pvar}\big) 
&\rightarrow \Diff^m(\R^e) \\
\x &\mapsto U^{\x}_{T\leftarrow 0} \equiv U^{\x^{\phi}}_{T \leftarrow 0}
\end{align*}
is uniformly continuous on sets of bounded $p$-variation (see Section~\ref{subsec:RPs} for the definition of $U^{\x^{\phi}}_{T \leftarrow 0}$).
\end{theorem}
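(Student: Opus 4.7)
The plan is to reduce everything to the classical continuous rough path setting via the construction $\x \mapsto \x^{\phi,\delta}$, apply Lyons' universal limit theorem for continuous RDEs (stated, e.g., in terms of the inhomogeneous metric $\rho_{\pvar}$), and then let $\delta \downarrow 0$.

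The crucial compatibility to establish first is that the ``joint path function'' construction is consistent: namely, for any admissible $(\x,\phi) \in \DD^{\pvar}$ and $\delta>0$, if $\tilde y^\delta$ denotes the solution of the continuous RDE $d\tilde y^\delta = V(\tilde y^\delta)\, d\x^{\phi,\delta}$ started from $y_0$, then $((\x,y), \phi_{(V)})$ constructed via the prescription before the theorem satisfies
\[
\bigl((\x,y)\bigr)^{\phi_{(V)},\delta} = \bigl(\x^{\phi,\delta},\, \tilde y^\delta\bigr)
\]
up to a common reparametrization. This is essentially a bookkeeping check: $\phi_{(V)}$ is defined exactly so that the interpolating arc on each jump of $(\x,y)$ is the solution of the RDE driven by $\phi(\x_{t-},\x_t)$, and the added fictitious time intervals $\delta r_k$ are shared between $\x^{\phi,\delta}$ and the joint lift. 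Once this is in place, the solution map lives entirely in the continuous rough path world.

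Next I would prove local Lipschitzness of $\alpha_{\pvar}$. Given $(\x,\phi),(\bar\x,\bar\phi) \in \DD^{\pvar}$ in a bounded $p$-variation ball, recall that $\|\x^{\phi,\delta}\|_{\pvar}$ is independent of $\delta$ (it is just a reparametrization of $\x^\phi$) and equal to $\|(\x,\phi)\|_{\pvar}$. For any $\varepsilon>0$, choose $\lambda \in \Lambda$ with $|\lambda| \vee \rho_{\pvar}(\x^{\phi,\delta}\circ\lambda,\bar\x^{\bar\phi,\delta}) \leq \sigma_{\pvar}(\x^{\phi,\delta},\bar\x^{\bar\phi,\delta}) + \varepsilon$. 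Since continuous RDE solutions are reparametrization-invariant, $\tilde y^\delta\circ\lambda$ solves the RDE driven by $\x^{\phi,\delta}\circ\lambda$. The classical Lyons continuity theorem in $\rho_{\pvar}$ (see \cite[Thm.~10.26]{FrizVictoir10} for vector fields in $\Lip^{\gamma-1}$) then gives
\[
\rho_{\pvar}\bigl((\x^{\phi,\delta}\circ\lambda,\tilde y^\delta\circ\lambda),\,(\bar\x^{\bar\phi,\delta},\bar{\tilde y}^\delta)\bigr) \leq C\bigl(\rho_{\pvar}(\x^{\phi,\delta}\circ\lambda,\bar\x^{\bar\phi,\delta}) + |y_0 - \bar y_0|\bigr),
\]
with $C$ depending only on the uniform $p$-variation bound. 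Taking the infimum over $\lambda$ yields the same bound for $\sigma_{\pvar}$ of the joint paths; passing to $\delta \downarrow 0$ and invoking the compatibility in the previous paragraph gives the desired local Lipschitz estimate for $\alpha_{\pvar}$.

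The convergence claim~\eqref{eq:ULTdriverConv} is then immediate: the $p$-variation of $((\x^n,y^n),\phi_{(V)}^n)$ is uniformly bounded by the local Lipschitz estimate, which (via Lemma~\ref{lem:pvarJumps}) controls $\|y^n\|_{\pvar}$. For pointwise convergence at continuity points of $\x$, note that $\alpha_{\pvar} \geq \alpha_\infty$ (since $\rho_{\pvar}$ dominates $d_\infty$), so $\alpha_\infty((\x^n,y^n),(\x,y)) \to 0$. Because $(\x,y)$ has finite $p$-variation, Remark~\ref{remark:goodCondition} ensures condition~\eqref{eq:vanishDiagonal} holds, and Lemma~\ref{lem:contPoints} yields pointwise convergence of $(\x^n_t,y^n_t)$ at every continuity point of the joint path; by the remark following Definition~\ref{def:canonicalRDE}, continuity points of $\x$ are continuity points of $y$, hence of the joint path.

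For the flow map, I would use that $U^{\x^{\phi,\delta}}_{T \leftarrow 0}$ is invariant under reparametrizations of $\x^{\phi,\delta}$ fixing the endpoints $0$ and $T$, combined with the classical result that $\x \mapsto U^\x_{T \leftarrow 0}$ is uniformly continuous on bounded $p$-variation sets in $\Diff^m$-distance (\cite[Thm.~11.12]{FrizVictoir10} under $\Lip^{\gamma+m-1}$). Applied to a near-optimal reparametrization and letting $\delta \downarrow 0$ gives uniform continuity with respect to $\alpha_{\pvar}$.

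The step I expect to be the main obstacle is the bookkeeping check described in the second paragraph: one has to verify that $\phi_{(V)}$ is indeed a well-defined path function (in particular that $\pi_{(V)}(0,y;\phi(x,\bar x))_1$ depends only on the equivalence class $(x,\bar x,\phi) / \sim$, which follows from reparametrization-invariance of continuous RDEs), and that the fictitious-time construction of $((\x,y))^{\phi_{(V)},\delta}$ really coincides, after a common time change, with the pair $(\x^{\phi,\delta},\tilde y^\delta)$. Once this identification is clean, the rest is a direct transfer of continuous rough path estimates through the limit $\delta \downarrow 0$.
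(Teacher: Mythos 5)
Your proposal is correct and follows essentially the same route as the paper: both reduce to the continuous setting via $\x \mapsto \x^{\phi,\delta}$, observe that by construction of $\phi_{(V)}$ the joint lift $(\x,y)^{\phi_{(V)},\delta}$ agrees (after a shared time change) with $(\x^{\phi,\delta},\tilde y^\delta)$, apply Lyons' local Lipschitz estimate for continuous RDEs and reparametrization-invariance of the solution map, and pass $\delta \downarrow 0$; the pointwise convergence via Lemma~\ref{lem:contPoints} and the flow-map statement via~\cite[Thm.~11.12]{FrizVictoir10} are handled the same way. The ``bookkeeping check'' you flagged as the main obstacle is indeed the pivot, and the paper handles it exactly as you anticipate, by noting $\tilde y_i \equiv y_i^\delta$ by the very definition of $\phi_{(V)}$.
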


\begin{remark}
Note that one cannot replace $U^{\x^n}_{T \leftarrow 0}$ by $U^{\x^n}_{t \leftarrow 0}$ for any (fixed) $t \in [0,T]$ in the final statement of Theorem~\ref{thm:alphaMetric}. This is a manifestation of the fact that $\alpha_{\pvar}$ does not behave well under restrictions to subintervals $[0,t] \subset [0,T]$ (cf. Remark~\ref{remark:alphaRest}).
\end{remark}

\begin{remark}
Though we don't address this here, the Lipschitz constant appearing in Theorem~\ref{thm:alphaMetric} can be made to depend explicitly on $V$ and the $p$-variation of $\x^\phi$.
\end{remark}

\begin{remark}
Note that in the second statement of Theorem~\ref{thm:alphaMetric}, one cannot replace $\x$ by $y$ in ``for all continuity points $t$ of $\x$''. Note also that this type of convergence is the one considered in the Wong--Zakai theorem of~\cite{KPP95}.
\end{remark}

\begin{proof}
The claim that the solution map is locally Lipschitz and that the associated flows converge follows from the corresponding result for continuous rough paths (see, e.g.,~\cite[Thm.~10.26]{FrizVictoir10}). 
To make this explicit, consider $\mathbf{x}$ with path function $\phi $ and then $z=\left( \mathbf{x},y\right)$ with path function $\phi _{\left( V\right) }$. Write
also $z^{\delta }=z^{\phi_{\left( V\right) },\delta }$ and $\mathbf{x}
^{\delta }=$ $\mathbf{x}^{\phi ,\delta }$. By definition
\begin{eqnarray*}
\alpha _{p\text{-var}}\left( z_{1},z_{2}\right)  &=&\lim_{\delta \rightarrow
0}\inf_{\lambda \in \Lambda }\max \left\{ \left\vert \lambda \right\vert
,\rho _{p\text{-var}}\left( z_{1}^{\delta }\circ \lambda ,z_{2}^{\delta
}\right) \right\}  \\
&=&\lim_{\delta \rightarrow 0}\inf_{\lambda \in \Lambda }\max \left\{
\left\vert \lambda \right\vert ,\rho _{p\text{-var}}\left( \mathbf{x}
_{1}^{\delta }\circ \lambda ,\mathbf{x}_{2}^{\delta }\right) +\left\vert
y_{1}^{\delta }\circ \lambda -y_{2}^{\delta }\right\vert \right\}.
\end{eqnarray*}
On the other hand, for $i=1,2,$ we have $\mathbf{x}_{i}^{\delta }=\mathbf{x}
_{i}^{\phi _{i},\delta }\in C^{p\text{-var}}$. Write $\tilde{y}_{i}=\pi _{\left(
V\right) }(0,y_{i,0},\mathbf{x}_{i}^{\delta })$ for the unique RDE solution
to $dy=V\left( y\right) d\mathbf{x}_{i}^{\delta }$ and note that $\tilde{y}
_{i}\equiv y_{i}^{\delta }$, by the very definition of the path function $\phi _{(V)}$.\ Note also that 
\begin{equation*}
y_{i}^{\delta }\circ \lambda \equiv \tilde{y}^{i}\circ \lambda =\pi _{\left(
V\right) }(0,y_{0}^{i},\mathbf{x}_{i}^{\delta }\circ \lambda )
\end{equation*}
for every time change $\lambda \in \Lambda $. It is then a direct
consequence of the (local Lipschitz) continuity of the It{\^o}-Lyons map, in the
setting of continuous rough paths, that 
\begin{equation*}
\left\vert y_{1}^{\delta }\circ \lambda -y_{2}^{\delta }\right\vert _{p\text{-var;}\left[ 0,T\right] }\lesssim \rho _{p\text{-var}}\left( \mathbf{x}
_{1}^{\delta }\circ \lambda ,\mathbf{x}_{2}^{\delta }\right) +\left\vert
y_{1,0}-y_{2,0}\right\vert.
\end{equation*}
As a consequence,
\begin{equation*}
\rho _{p\text{-var}}\left( z_{1}^{\delta }\circ \lambda ,z_{2}^{\delta
}\right) \lesssim \rho _{p\text{-var}}\left( \mathbf{x}_{1}^{\delta }\circ
\lambda ,\mathbf{x}_{2}^{\delta }\right) +\left\vert
y_{1,0}-y_{2,0}\right\vert.
\end{equation*}
Finally, take $\lim_{\delta \rightarrow 0}\inf_{\lambda \in \Lambda }\max
\left( \left\vert \lambda \right\vert ,\cdot \right) $ on both sides to see
that
\begin{equation*}
\alpha _{p\text{-var}}\left( z_{1},z_{2}\right) \lesssim \alpha _{p\text{-var}}\left( \mathbf{x}_{1},\mathbf{x}_{2}\right) +\left\vert
y_{1,0}-y_{2,0}\right\vert .
\end{equation*}
The claim of a.e. pointwise convergence follows from Lemma~\ref{lem:contPoints}, while uniform continuity of $(\x,\phi) \mapsto U^{\x}_{T \leftarrow 0}$ follows as above (cf.~\cite[Thm.~11.12]{FrizVictoir10}).
\end{proof}

\begin{remark}
An important feature of solutions to continuous RDEs~\eqref{eq:cadlagRDE} is that they can be canonically treated as geometric $p$-rough paths $\y \in C^{\pvar}([0,T], G^N(\R^e))$, and the solution map $(\y_0,\x) \mapsto \y$ remains locally Lipschitz for the metric $\rho_{\pvar}$ (see~\cite[Sec.~10.4]{FrizVictoir10}). This allows one to use the solution $\y$ as the driving signal in a secondary RDE
\begin{equation}\label{eq:zRDE}
dz_t = W(z_t) d\y_t, \; \; z_0 \in \R^{n}.
\end{equation}
At least with the notion of canonical RDEs considered in this article, we cannot expect this functorial nature to be completely preserved due to the fact that in general no path function $\psi$ can be defined on $G^N(\R^e)$ to capture the information of how $\y$ traversed a jump $(\y_{t-},\y_t)$ (which, in our context, clearly impacts	the solution of~\eqref{eq:zRDE}).

However, at the cost of retaining the original driving signal $\x$, we can readily solve the secondary RDE~\eqref{eq:zRDE}. Indeed, for $\Lip^\gamma$ vector fields $V=(V_1,\ldots, V_d)$ on $\R^e$ and $W=(W_1,\ldots, W_e)$ on $\R^n$, we consider the $\Lip^\gamma$ vector fields $U=(U_1,\ldots,U_d)$ on $\R^e\oplus \R^n$ defined by $U_i(y,z) = V_i(y) + \sum_{k=1}^e W_k(z) V_i(y)^k$, where $V_i(y)^k$ is the $k$-th coordinate of $V_i(y) \in \R^e$. Then the natural solution to~\eqref{eq:zRDE} is given by the larger RDE
\begin{equation}\label{eq:yxRDE}
d(y_t,z_t) = U(y_t,z_t) \diamond d(\x_t,\phi), \; \; (y_0,z_0) \in \R^e \oplus \R^n.
\end{equation}
As a consistency check, one can readily see that, in the continuous setting, the second component of the solution to~\eqref{eq:yxRDE} coincides with the solution $z_t$ of~\eqref{eq:zRDE}.
\end{remark}

If we further assume that drivers converge in Skorokhod topology, then more can be said about convergence of RDE solutions.

\begin{proposition}\label{prop:alphaSkorokhod}
Let notation be as in Theorem~\ref{thm:alphaMetric}.

\begin{enumerate}[label=\upshape(\roman*\upshape)]
\item \label{point:contSkor1} Suppose that~\eqref{eq:ULTdriverConv} holds and that $\lim_{n \rightarrow \infty} \x^n = \x$ in the Skorokhod topology. Then $\lim_{n \rightarrow \infty} y^n = y$ in the Skorokhod topology.

\item \label{point:contSkor2} Suppose that $\phi$ is an endpoint continuous, $p$-approximating path function defined on (a subset of) $G^N(\R^d) \times G^N(\R^d)$. Then on sets of bounded $p$-variation, the solution map
\begin{align*}
\R^e \times \big(\DD_\phi^{\pvar}, \sigma_{\infty}\big) 
&\rightarrow \left( D^{\pvar}([0,T],\R^e), \sigma_{\infty} \right) \\
(y_0, \x) &\mapsto y
\end{align*}
is continuous.
\end{enumerate}
\end{proposition}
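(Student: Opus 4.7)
The strategy is to prove (i) by direct construction and then derive (ii) as an immediate reduction via Proposition~\ref{prop:SkorkhodAlphaMap}. For (i), first invoke Theorem~\ref{thm:alphaMetric} for the combined driver-solution pair to obtain $\alpha_{\pvar}$-convergence of $((\x^n,y^n),\phi^n_{(V)})$ to $((\x,y),\phi_{(V)})$, the bound $\sup_n\norm{y^n}_{\pvar}<\infty$, and $y^n_t\to y_t$ at every continuity point $t$ of $\x$. Next, using $\sigma_\infty(\x^n,\x)\to 0$, choose reparametrizations $\lambda^n\in\Lambda$ with $|\lambda^n|\to 0$ and $d_\infty(\x^n\circ\lambda^n,\x)\to 0$, and for each jump time $t_k$ of $\x$ identify (eventually unique) jump times $t_k^n$ of $\x^n$ with $\lambda^n(t_k^n)\to t_k$ and $(\x^n_{t_k^n-},\x^n_{t_k^n})\to(\x_{t_k-},\x_{t_k})$.

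The key step is to show the post-jump values satisfy $y^n_{t_k^n}\to y_{t_k}$. Unpacking the definition $\alpha_{\pvar}(\x^n,\x)=\lim_{\delta\downarrow 0}\sigma_{\pvar}((\x^n)^{\phi^n,\delta},\x^{\phi,\delta})$, for any $\delta>0$ small the filled-in continuous paths $(\x^n)^{\phi^n,\delta}$ converge in $\sigma_{\pvar}$ to $\x^{\phi,\delta}$; restricted to the sub-intervals corresponding to the jumps $t_k^n$ and $t_k$, these filled-in paths trace reparametrized copies of $\phi^n(\x^n_{t_k^n-},\x^n_{t_k^n})$ and $\phi(\x_{t_k-},\x_{t_k})$ respectively, which therefore converge in $\rho_{\pvar}$ up to reparametrization. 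Combined with $y^n_{t_k^n-}\to y_{t_k-}$ (obtained from pointwise convergence at continuity points together with a standard $\varepsilon$-argument based on uniform $p$-variation control), the local Lipschitz continuity of the continuous RDE solution map (Theorem~\ref{thm:alphaMetric} in the continuous setting) then yields $y^n_{t_k^n}\to y_{t_k}$.

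To conclude (i), build a time change $\mu^n\in\Lambda$ by modifying $\lambda^n$ so that $\mu^n(t_k)=t_k^n$ for the finitely many jumps of $\x$ of size exceeding a prescribed $\varepsilon>0$, and handle the remaining small jumps via an $\varepsilon$-truncation argument using the uniform $p$-variation bound. Uniform convergence $d_\infty(y^n\circ\mu^n,y)\to 0$ then follows by combining exact jump alignment with uniform convergence on the (closed) continuity intervals of $\x$, which yields $\sigma_\infty(y^n,y)\to 0$. For (ii), pick $p'\in(p,\gamma)$; by Proposition~\ref{prop:SkorkhodAlphaMap} the hypothesis $\sigma_\infty(\x^n,\x)\to 0$ on sets of bounded $p$-variation upgrades to $\alpha_{\pprimevar}(\x^n,\x)\to 0$, and since the vector fields lie in $\Lip^{\gamma+m-1}$ with $\gamma>p'$, part (i) applied with parameter $p'$ in place of $p$ immediately yields $\sigma_\infty(y^n,y)\to 0$.

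The main obstacle is transferring $\alpha_{\pvar}$-convergence of the filled-in paths into $\rho_{\pvar}$-convergence on the correct sub-intervals (so that the continuous RDE solution map may be applied to each jump-filling $\phi^n$-interpolation), and then constructing a single reparametrization $\mu^n$ that simultaneously aligns all the countably many jumps while keeping $|\mu^n|\to 0$. Both are standard but technically delicate pieces of Skorokhod bookkeeping that the detailed proof must execute with care.
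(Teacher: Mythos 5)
Your treatment of part~(ii) matches the paper exactly: pick $p'\in(p,\gamma)$, invoke Proposition~\ref{prop:SkorkhodAlphaMap} to upgrade $\sigma_\infty$-convergence on bounded $p$-variation sets to $\alpha_{\pprimevar}$-convergence, then apply part~(i). For part~(i), however, your route is genuinely different from the paper's, which is entirely soft: it invokes the standard compactness criterion for the Skorokhod space~\cite[Thm.~12.3]{Billingsley99} (compact containment plus vanishing modulus $\omega'$), dominates $\omega'_{y^n}(\varepsilon)$ by a $p$-variation modulus $\omega^{\pvar}_{y^n}(\varepsilon)$, and shows that the latter is controlled uniformly in $n$ by passing through the moduli of the $\delta$-interpolations $(\x^n)^{\phi^n,\delta}$ via Lemma~\ref{lem:pvarJumps} and~\eqref{eq:ULTdriverConv}. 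Relative compactness of $(y^n)$, combined with the pointwise convergence at continuity points of $\x$ already given by Theorem~\ref{thm:alphaMetric}, then forces the unique subsequential limit to be $y$.

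Your alternative---constructing explicit time changes $\mu^n$ that align jumps of $\x^n$ and $\x$ and proving $d_\infty(y^n\circ\mu^n,y)\to 0$---can be made to work, but it is considerably heavier for precisely the reasons you flag at the end. Two points are worth stressing. First, the ``$\varepsilon$-argument'' you invoke for $y^n_{t_k^n-}\to y_{t_k-}$ cannot run on the uniform $p$-variation \emph{bound} $\sup_n\norm{y^n}_{\pvar}<\infty$ alone: that bound controls no modulus of continuity uniformly in $n$ (consider $y^n_t=\1{t\ge t_n}$ with $t_n\to 1/2$). What you actually need is the uniform vanishing of the modulus, $\lim_{\varepsilon\to 0}\sup_n\omega^{\pvar}_{y^n}(\varepsilon)=0$, which is exactly what the paper's proof establishes en route to compactness. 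Second, because $\alpha_{\pvar}$ does not localize to sub-intervals (Remark~\ref{remark:alphaRest}), extracting $\rho_{\pvar}$-convergence of the jump-filling segments from the global $\sigma_{\pvar}$-bound on the $\delta$-interpolations requires tracking how the optimizing reparametrization moves the fictitious intervals---another instance of the same modulus issue. In short, completing your sketch would essentially amount to re-deriving the Skorokhod compactness criterion by hand; quoting it directly, as the paper does, is shorter and less error-prone.
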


\begin{proof}
\ref{point:contSkor1} By Theorem~\ref{thm:alphaMetric}, it suffices to show that $(y^n)_{n \geq 1}$ is compact in the Skorokhod space $D([0,T],\R^e)$.
Recall that for a Polish space $E$, a subset $A\subset D([0,T],E)$ is compact if and only if $\{\y_t \mid \y\in A, t \in [0,T]\}$ is compact and
\[
\lim_{\varepsilon\to 0} \sup_{\y \in A} \omega'_{\y}(\varepsilon) = 0,
\]
where 
\[
\omega'_{\y}(\varepsilon) := \inf_{|\D|_{\min} > \varepsilon} \max_{t_i \in \D} \sup_{s,t \in [t_i,t_{i+1})} d(\y_s,\y_t), \; \; |\D|_{\min} := \min_{t_i \in \D}|t_{i+1} - t_i|,
\]
see, e.g.,~\cite[Thm.~12.3]{Billingsley99}.
In particular, using that $\|\x^\phi\|_{\pvar}< \infty$ and applying the first inequality in Lemma~\ref{lem:pvarJumps} to $\x$, we see that $\lim_{\varepsilon \to 0}\lim_{\delta \to 0}\omega^{\pvar}_{\x^{\phi,\delta}}(\varepsilon)=0$, where
\[
\omega^{\pvar}_{\y}(\varepsilon) := \inf_{|\D|_{\min} > \varepsilon} \max_{t_i \in \D} \norm{\y}_{\pvar;[t_i,t_{i+1})}.
\]
It now follows from~\eqref{eq:ULTdriverConv} that
\[
\lim_{\varepsilon \rightarrow 0} \sup_n \lim_{\delta \rightarrow 0} \omega^{\pvar}_{(\x^n)^{\phi^n,\delta}}(\varepsilon) = 0,
\]
from which we see that $\lim_{\varepsilon \rightarrow 0} \sup_n \omega^{\pvar}_{y^n}(\varepsilon) = 0$. Since $\omega'_y(\varepsilon) \leq \omega^{\pvar}_y(\varepsilon)$, it follows that $(y^n)_{n \geq 1}$ is compact in $D([0,T],\R^e)$ as required.

\ref{point:contSkor2} This follows directly from taking $p < p' < \gamma$ and applying~\ref{point:contSkor1} and Proposition~\ref{prop:SkorkhodAlphaMap}.
\end{proof}

\begin{remark}\label{remark:suboptimal}
We suspect that under suitable conditions on a fixed path function $\phi$, the solution map $\DD_\phi^{\pvar} \rightarrow D^{\pvar}([0,T],\R^e)$ remains locally uniformly (or even Lipschitz) continuous in the classical $p$-variation metric $\rho_{\pvar}$ defined by~\eqref{eq:rhoDef} (and thus trivially for the metric $\sigma_{\pvar}$). However there seems to be no easy way to derive this as a consequence of our main Theorem~\ref{thm:alphaMetric}.
We suspect that this could be done by carefully relating canonical with ``It{\^o}-type'' \ {\it non-canonical} RDEs (see~\cite[Def.~37]{FrizShekhar17}), followed by a proper stability analysis of the latter.
The study of such non-canonical equations was recently carried out in~\cite{FZ17}.
\end{remark}

\subsection{Young pairing and translation operator}

In this section we extend the Young pairing $S_{N}(\x,h)$ and rough path translation operator $T_h(\x)$ to the c{\`a}dl{\`a}g setting.

Given a path $(x,h)$ in $\R^{d+d'}$ (smooth), its canonical level-$2$ rough path lift is given by
$((x,h), (\int x \otimes dx, \int x \otimes dh, \int h \otimes dx, \int h\otimes dh))$.
This extends immediately to a $p$-rough path $\x$, for $p \in (2,3)$, with $\int x \otimes dx$ replaced by the a priori level-$2$ information $\x^2$.
For $h$ of finite $q$-variation with $1/p+1/q > 1$, all other cross-integrals remain defined. This is the Young pairing of a $p$-rough path $\x$ with a $q$-variation path $h$ called $S_2 (\x, h)$.
The mapping $(\x,h) \mapsto S_2(\x,h)$ is continuous, and the general construction, for continuous (rough) paths, is found in~\cite[Sec.~9.4]{FrizVictoir10}.
The important operation of translating rough paths in some $h$-direction, formally $\int (\x+h) \otimes d(\x+h)$, can be algebraically formulated in terms of the Young pairing and has found many applications~\cite{FrizHairer14}.

We first illustrate the difficulty of \emph{continuously} extending $S_{N}(\x,h)$ and $T_h(\x)$ to c{\`a}dl{\`a}g paths by showing that the addition map $(\x,h) \mapsto \x+h$ is not continuous as a map $\DD^{\onevar}([0,T],\R^d) \times \DD^{\onevar}([0,T],\R^d) \rightarrow \DD([0,T],\R^d)$ where we equip $\DD^{\onevar}([0,T],\R^d)$ and $\DD([0,T],\R^d)$ with $\alpha_{\onevar}$ and $\alpha_{\infty}$ respectively (which is reminiscent of the fact that $D([0,T],\R^d)$ is not a topological vector space).

\begin{example}
Consider the sequences $(\x^n)_{n \geq 1}$, $(h^n)_{n \geq 1}$, and $(\bar h^n)_{n \geq 1}$, of continuous piecewise linear paths from $[0,2]$ to $\R^2$, where $\x^n$ is constant on $[0,1-1/n]$, moves linearly from $0$ to $e_1$ over $[1-1/n,1]$ and remains constant on $[1,2]$, while $h^n$ and $\bar h^n$ do the same, except from $0$ to $e_2$ and over the intervals $[1-2/n,1-1/n]$ and $[1,1+1/n]$ respectively. 

One can see that $\x^n$ converges to $(\1{t \geq 1} e_1, \phi)$ in $\alpha_{\onevar}$, and $h^n$ and $\bar h^n$ both converge to $(\1{t \geq 1} e_2, \phi)$ in $\alpha_{\onevar}$, where $\phi$ is the linear path function on $\R^2$. However $\x^n + h^n$ converges to $(\1{t \geq 1}(e_1+e_2), \phi_{2,1})$ while $\x^n + \bar h^n$ converges to $(\1{t \geq 1}(e_1+e_2), \phi_{1,2})$ in $\alpha_{\infty}$, where $\phi_{i,j}$ is the ``Hoff'' path function which moves first in the $i$-th coordinate, and then in the $j$-th coordinate.
\end{example}

Note that the limiting path in the above example is unambiguously defined (namely $\1{t \geq 1}(e_1+e_2)$) whereas
the corresponding path function is not. In the following definition we circumvent this 
problem by choosing a priori the (log-)linear path function.

\begin{definition}\label{def:translation}
Let $p \geq 1$ and $1 \leq q < 2$ such that $1/p + 1/q > 1$.
For integers $d,d' \geq 1$, let $\phi$ and $\bar\phi$ be the log-linear path functions on $G^N(\R^{d+d'})$ and $G^N(\R^{d})\times \R^{d'}$ respectively.

For $\x \in D^{\pvar}([0,T],G^N(\R^d))$ and $h \in D^{\qvar}([0,T],\R^{d'})$, consider the continuous $G^N(\R^d)\times \R^{d'}$-valued path $(\hat{\x},\hat h) = (\x,h)^{\bar\phi}$.
We define $(S_N(\x,h),\phi) \in \DD^{\pvar}([0,T],G^N(\R^{d+d'}))$ by
\begin{equation}\label{eq:translationDef}
S_N(\x,h) = S_N(\hat{\x},\hat h) \circ \tau_{(\x,h)},
\end{equation}
where we recall that $\tau_{(\x,h)}$ is defined by~\eqref{eq:timeChange} for the c{\`a}dl{\`a}g path $(\x,h)$.
In the case that $d=d'$, we let $\hat \phi$ denote the log-linear path function on $G^N(\R^d)$ and define $(T_h(\x),\hat\phi) \in \DD^{\pvar}([0,T],G^N(\R^{d}))$ by $T_h(\x) = T_{\hat h}(\hat{\x}) \circ \tau_{(\x,h)}$.
\end{definition}

\begin{remark}
Due to the choice of log-linear path function, note that $\hat{\x}$ and $\hat h$ have finite $p$- and $q$-variation respectively, so that $S_N(\hat{\x},\hat h)$ and $T_{\hat h}(\hat{\x})$ are well-defined as continuous rough paths.
Moreover, we have the relation $T_h(\x) = \plus\left( S_N(\x,h)\right)$ (see~\cite[Thm.~9.33]{FrizVictoir10}).
\end{remark}

We now record a simple result in the case $N=2$ which will be helpful in the remainder of the paper.
Recall from Definition~\ref{def:RPs} that $\x \in D^{\pvar}([0,T],G^2(\R^d))$ is called Marcus-like if $\log(\Delta\x_t) \in \R^d\oplus\{0\} \subset \g^2(\R^d)$, i.e., $\x = \exp(x, A)$ where $\exp$ is taken in $T^2(\R^d)$ and $A_{t-,t} = 0$ for all $t \in [0,T]$.
\begin{proposition}\label{prop:translation}
Let notation be as in Definition~\ref{def:translation}.
Suppose that $\x \in D^{\pvar}([0,T],G^2(\R^d))$ is Marcus-like.
Then for every $h \in D^{\qvar}([0,T],\R^{d'})$ it holds that $S_2(\x,h)$ is a weakly geometric Marcus-like $p$-rough path with the anti-symmetric part of its second level determined for $i\in\{1,\ldots, d\}$ and $i' \in \{1\ldots, d'\}$ by
\begin{equation}\label{eq:translationIntegrals}
S_2(\x,h)^{i,d+i'}_{s,t} - S_2(\x,h)^{d+i',i}_{s,t} = \int_{(s,t]} x^{i}_{s,u-} \otimes dh^{i'}_u - h_{s,u-} ^{i'}\otimes dx^i_u,
\end{equation}
where the integrals are well-defined Young integrals (all other components of the anti-symmetric part are given canonically by $\x$ and $h$).
Furthermore
\begin{equation}\label{eq:translationInequality}
\|S_2(\x,h)^\phi\|_{\pvar}\vee \|(T_h(\x)^{\hat\phi}\|_{\pvar} \lesssim \|(\x,h)^{\bar\phi}\|_{\pvar} \lesssim \norm{\x}_{\pvar} + \norm{h}_{\qvar}.
\end{equation}
\end{proposition}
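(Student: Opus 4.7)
The plan is to reduce everything to the continuous Young pairing applied to the continuous representative $(\hat\x, \hat h) = (\x, h)^{\bar\phi}$, where $\bar\phi$ is the log-linear path function on the product group $G^2(\R^d)\times \R^{d'}$. By Lemma~\ref{lem:pvarJumps} and the fact that $\bar\phi$ has bounded $p$- and $q$-variation moduli on each factor, $\hat\x$ and $\hat h$ have finite $p$- and $q$-variation respectively, so the continuous Young pairing $S_2(\hat\x, \hat h)$ is defined via~\cite[Thm.~9.33]{FrizVictoir10} as a weakly geometric $p$-rough path over $\R^{d+d'}$. Since by construction $S_2(\x, h) = S_2(\hat\x, \hat h)\circ \tau_{(\x,h)}$, it immediately inherits both finite $p$-variation and the weakly geometric property.

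For the Marcus-like property, I would analyze each fictitious jump interval $[\tau(t_k-), \tau(t_k))$. Log-linearity of $\bar\phi$ combined with the Marcus-like assumption $\log\Delta\x_{t_k} = (\Delta x_{t_k},0)$ forces the first levels of both $\hat\x$ and $\hat h$ to move linearly from $(x_{t_k-},h_{t_k-})$ to $(x_{t_k},h_{t_k})$. Writing $\Delta z = (\Delta x_{t_k}, \Delta h_{t_k})\in\R^{d+d'}$, a direct computation of $\int \hat z \otimes d\hat z$ over this interval gives a second-level increment of $\tfrac{1}{2}\Delta z\otimes \Delta z$, which is symmetric. Taking $\log$ in $T^2(\R^{d+d'})$ then yields $(\Delta z, 0)\in \R^{d+d'}\oplus\{0\}$, confirming that $S_2(\x, h)$ is Marcus-like.

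For formula~\eqref{eq:translationIntegrals}, I would split the antisymmetric cross-component $S_2(\hat\x,\hat h)^{i,d+i'}_{\tau(s),\tau(t)} - S_2(\hat\x,\hat h)^{d+i',i}_{\tau(s),\tau(t)}$ into continuous segments between jumps plus per-jump contributions. Explicit integration of the linear $\hat x^i$ and $\hat h^{i'}$ on each fictitious jump interval at $t_k$ gives a contribution
\[
(x^i_{t_k-}-x^i_s + \tfrac{1}{2}\Delta x^i_{t_k})\Delta h^{i'}_{t_k} - (h^{i'}_{t_k-}-h^{i'}_s + \tfrac{1}{2}\Delta h^{i'}_{t_k})\Delta x^i_{t_k},
\]
in which the scalar $\tfrac{1}{2}\Delta x^i_{t_k}\Delta h^{i'}_{t_k}$ pieces cancel, leaving $x^i_{s,t_k-}\Delta h^{i'}_{t_k} - h^{i'}_{s,t_k-}\Delta x^i_{t_k}$. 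Reassembling the continuous segments and these reduced jump contributions produces exactly the c\`adl\`ag Young integral $\int_{(s,t]} x^i_{s,u-}\,dh^{i'}_u - h^{i'}_{s,u-}\,dx^i_u$, which is well defined since $1/p+1/q > 1$.

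For the inequality~\eqref{eq:translationInequality}, the key observation is that on each jump interval $S_2(\hat\x, \hat h)$ coincides with the log-linear geodesic in $G^2(\R^{d+d'})$ from $S_2(\x,h)_{t_k-}$ to $S_2(\x,h)_{t_k}$: at parameter $s\in[0,1]$ both have first level $s\Delta z$ and symmetric second level $\tfrac{s^2}{2}\Delta z^{\otimes 2}$. Hence $S_2(\hat\x,\hat h)$ is a reparametrization of $S_2(\x,h)^{\phi}$ (in the sense of Remark~\ref{rmk:xPhiwelldefined}), so their $p$-variations agree. Chaining the continuous Young pairing estimate of~\cite[Thm.~9.33]{FrizVictoir10} with Lemma~\ref{lem:pvarJumps} applied to $(\x,h)^{\bar\phi}$ gives~\eqref{eq:translationInequality}; the bound for $T_h(\x)=\plus(S_2(\x,h))$ follows since $\plus$ is Lipschitz. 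The main technical obstacle is precisely the bookkeeping in paragraph three: verifying that the $\tfrac{1}{2}\Delta x\,\Delta h$ terms from the continuous pairing across each jump cancel in exactly the right way to reproduce the left-limit Young integral in~\eqref{eq:translationIntegrals}, since any sign slip would manifest as a spurious Stratonovich-type correction.
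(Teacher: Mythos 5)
Your overall reduction — everything filtered through the continuous representative $(\hat\x,\hat h)=(\x,h)^{\bar\phi}$, the explicit computation that over each fictitious jump interval the continuous Young pairing generates the increment $\exp(s\Delta z)$ with purely symmetric second level $\tfrac{s^2}{2}\Delta z^{\otimes 2}$, and the observation that $S_2(\hat\x,\hat h)$ is therefore a reparametrization of $S_2(\x,h)^\phi$ — matches the spirit of the paper's proof and is correct.

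Where you genuinely diverge is the derivation of~\eqref{eq:translationIntegrals}. The paper avoids any direct manipulation of a sum over jumps: it observes that $\tilde\ZZ=S_2(\hat\x,\hat h)$ solves the linear RDE $d\tilde\ZZ=\tilde\ZZ\otimes d\tilde\ZZ$, and invokes~\cite[Thm.~38]{FrizShekhar17} to conclude that $\ZZ=\tilde\ZZ\circ\tau_{(\x,h)}$ solves the canonical rough equation $d\ZZ=\ZZ\otimes\diamond d\ZZ$; this yields at once the decomposition $\ZZ_t = 1 + \int_0^t\ZZ_{s-}\otimes d\ZZ_s + \sum_{0<s\le t}\tfrac12(\Delta Z_s)^{\otimes 2}$, and since the correction sum is manifestly symmetric, the antisymmetric part of the cross-component is read off directly from the well-posed non-canonical (forward) integral. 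Your route is more elementary — split into continuous segments between jumps plus per-jump contributions, and observe the cancellation of the $\tfrac12\Delta x\,\Delta h$ term in the antisymmetric part. The computation of each jump contribution is correct, and the cancellation you identify is precisely the content of the paper's remark that the correction sum only hits the symmetric part. However, as written, the ``reassembling the continuous segments and reduced jump contributions'' step is not quite rigorous when the jump set is countably dense (which happens generically for c\`adl\`ag paths of finite $p$-variation): there are then no continuous ``segments between jumps'' in any naive sense. To make this precise you would need either an approximation argument (truncate to the $N$ largest jumps and pass to the limit in $q$-variation) or a direct reparametrization lemma for c\`adl\`ag Young--Stieltjes integrals identifying $\int_{\tau(s)}^{\tau(t)}(\hat x-\hat x_{\tau(s)})\,d\hat h$ with the left-limit Young integral $\int_{(s,t]}x_{s,u-}\,dh_u$. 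The paper's invocation of the canonical rough equation absorbs exactly this technicality, which is what that framework buys.

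On the inequality, your explicit identification of $S_2(\hat\x,\hat h)$ with $S_2(\x,h)^\phi$ up to reparametrization is a clean way to package the paper's more implicit use of the fact that $\phi,\bar\phi,\hat\phi$ are $1$-approximating on the restricted domains, and the chain through~\cite[Thm.~9.33]{FrizVictoir10} and Lemma~\ref{lem:pvarJumps} is the same in substance.
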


\begin{proof}
By definition of $\ZZ := (Z,\Z) := S_2(\x,h)$ and the choice of log-linear path functions, we see that $\ZZ$ is indeed Marcus-like.
Observe that $\tilde{\ZZ} := S_2(\hat{\x},\hat h)$ satisfies the linear RDE in $T^N(\R^{d+d'})$ driven by itself $d\tilde{\ZZ}_t = \tilde{\ZZ}_t\otimes d\tilde{\ZZ}$.
It follows by~\cite[Thm.~38]{FrizShekhar17} and the definition of $\tau_{(\x,h)}$ that $\ZZ$ satisfies the canonical rough equation $d\ZZ_t = \ZZ_t\otimes \diamond d\ZZ_t$ (in the sense of~\cite[Def.~37]{FrizShekhar17}), so that
\begin{align*}
\ZZ_t &= \ZZ_0 + \int_0^t \ZZ_{s-}\otimes d\ZZ_s + \sum_{0<s\leq t} \ZZ_{s-} \otimes \exp(\Delta Z_s) - \ZZ_{s-} - \ZZ_{s-}\otimes \Delta Z_s
\\
&= 1_{G^N(\R^{d+d'})} + \int_0^t \ZZ_{s-}\otimes d\ZZ_s + \sum_{0<s\leq t} \frac{1}{2}(\Delta Z_s)^{\otimes 2},
\end{align*}
where we have used that $\ZZ$ is Marcus-like.
In particular, the last term on the RHS only contributes to the symmetric part of $\Z$, so that~\eqref{eq:translationIntegrals} follows by identifying $\int_0^t\ZZ_{s-}\otimes d\ZZ_s$ in the appropriate components with well-defined Young integrals.
The inequality~\eqref{eq:translationInequality} follows from standard (continuous) rough path estimates~\cite[Thm.~9.26,~9.33]{FrizVictoir10} along with the fact that $\x$ is Marcus-like (so that upon appropriately restricting domains, we may assume $\phi$, $\bar \phi$, and $\hat\phi$ are $1$-approximating).
\end{proof}

\section{General multidimensional semimartingales as rough paths}
\label{sec:cadlagSemimart}

\subsection{Enhanced $p$-variation BDG inequality}\label{subsec:BDG}

The main result of this subsection is the BDG inequality Theorem~\ref{thm:pvarBDG} for enhanced c{\`a}dl{\`a}g local martingales. The proof largely follows a classical argument found in~\cite{Lepingle76, FrizVictoir10} with the exception of Lemma~\ref{lem:interpolation} which constitutes our main novel input.

Let $X$ be an $\R^d$-valued semimartingale with $X_0 = 0$ and $(\R^d)^{\otimes 2}$-valued bracket $[X]$. Consider the $\g^{(2)}(\R^d) = \mathfrak{so}(\R^d)$-valued (``area'') process
\begin{equation}\label{eq:stochArea}
A_{t}^{i,j} = \frac{1}{2}\int_0^t X_{r-}^i dX^j_r - X^j_{r-} dX^i_{r}
\end{equation}
as an It{\^o} integral, and the ``Marcus lift'' (terminology from \cite{FrizShekhar17}) 
of $X$ to a $G^2(\R^d)$-valued process given by
\[
\X_t := \exp(X_t + A_t),
\]
where $\exp$ is taken in the truncated tensor algebra $T^2(\R^d)$.

Note that, if $X = Y + K$ where $Y$ is (another) semimartingale (w.r.t. the same filtration) and $K$ is adapted, of bounded variation, then the respective Marcus lifts of $X$ and $Y$ are precisely related by the translation operator; that is, $\X = T_K \Y$.
This can be seen by combining~\eqref{eq:translationIntegrals} with the fact that the second level of $\X$  is given precisely by the Marcus canonical integral $\int X\otimes \diamond dX$; see~\cite[Prop.~16]{FrizShekhar17} for details.

Recall that a function $F : [0,\infty) \to [0,\infty)$ is called \emph{moderate} if it is continuous and increasing, $F(x)=0$ if and only if $x=0$, and there exists $c>0$ such that $F(2x) \leq cF(x)$ for all $x > 0$.

\begin{lemma}[Uniform enhanced BDG inequality]\label{lem:unifBDG}
For any convex moderate function $F$ there exist $c,C > 0$ such that for any $\R^d$-valued local martingale $X$
\[
c \EEE{F \left(\norms{[X]_\infty} \right)} \leq \E\Big[\sup_{s,t \geq 0} F \left(\norm{\X_{s,t}}^2\right)\Big] \leq C \EEE{F \left(\norms{[X]_\infty} \right)}.
\]
\end{lemma}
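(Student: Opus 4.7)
My plan is to exploit the standard Carnot--Carath\'eodory estimate
\[
\norm{\X_{s,t}}^2 \asymp |X_t - X_s|^2 + |A_{s,t}|
\]
to split the upper bound into two separate estimates: a level-one estimate on $\sup_{s\leq t}|X_t-X_s|^2 \leq 4\sup_t|X_t|^2$, which is immediate from the classical c\`adl\`ag moderate BDG inequality applied to the $\R^d$-valued local martingale $X$; and a level-two estimate on $\sup_{s\leq t}|A_{s,t}|$. For the latter, I would first use the algebraic identity (valid pathwise for c\`adl\`ag paths)
\[
A_{s,t} \;=\; A_t - A_s \;-\; \tfrac{1}{2}\Anti\bigl(X_s \otimes (X_t - X_s)\bigr)
\]
to reduce the supremum of the area increments to controlling $\sup_t|A_t|$ and $\sup_t|X_t|^2$ separately. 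Note that this identity is purely algebraic and makes no appeal to continuity, which is precisely what allows the c\`adl\`ag extension.

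To handle $\sup_t|A_t|$, I would observe that the matrix-valued process $A$ is itself a local martingale as an It\^o integral against $X$, so a further application of the classical moderate BDG yields $\EEE{F(\sup_t|A_t|)} \leq C\EEE{F([A]_\infty^{1/2})}$. From $dA_r = \tfrac{1}{2}\Anti(X_{r-}\otimes dX_r)$ the bracket of $A$ satisfies the pointwise estimate $[A]_\infty \leq C\sup_r|X_r|^2\,\norms{[X]_\infty}$ (including the c\`adl\`ag contributions $(\Delta A_t)^{\otimes 2}$ coming from the $\Anti(X_{t-}\otimes \Delta X_t)^{\otimes 2}$ jumps of $A$), whence $[A]_\infty^{1/2} \leq C\sup_r|X_r|\cdot\norms{[X]_\infty}^{1/2}$. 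The AM--GM inequality then gives
\[
\sup_r|X_r|\cdot\norms{[X]_\infty}^{1/2} \leq \tfrac{1}{2}\sup_r|X_r|^2 + \tfrac{1}{2}\norms{[X]_\infty},
\]
and combining the convexity estimate $F(\tfrac{1}{2}(a+b)) \leq \tfrac{1}{2}(F(a)+F(b))$ with the moderation $F(a+b) \leq c_F (F(a)+F(b))$ reduces matters once more to the level-one BDG applied to $X$, closing the upper bound.

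The lower bound is immediate: the Carnot--Carath\'eodory estimate $|X_t - X_s| \leq c\norm{\X_{s,t}}$ together with $\sup_{s,t}|X_t-X_s| \geq \sup_t |X_t|$ (since $X_0=0$) and classical lower BDG for $X$ gives
\[
c\, \EEE{F(\norms{[X]_\infty})} \;\leq\; C\,\EEE{\sup_{s,t} F(\norm{\X_{s,t}}^2)}.
\]
I expect the main obstacle to be the AM--GM/moderation passage that avoids Cauchy--Schwarz losses: for the pure monomial case $F(x)=x^r$ the estimate $[A]_\infty^{1/2}\leq \sup|X|\cdot\norms{[X]_\infty}^{1/2}$ yields the desired inequality directly by H\"older, but to cover all convex moderate $F$ uniformly one has no such factorisation at disposal and must rely instead on convexity together with the moderation constant of $F$ exactly as above; this, together with care in handling the pointwise bracket bound in the presence of jumps of $A$, is precisely what Lemma~\ref{lem:interpolation} is designed to formalize.
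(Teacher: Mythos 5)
Your proof is correct and follows essentially the same route as the paper: both reduce the estimate to the level-one classical (c\`adl\`ag) BDG inequality applied to $X$ with the convex moderate function $F(|\cdot|^2)$, plus the classical moderate BDG applied to the area local martingale $A$, combined with the pathwise bracket bound $\norms{[A]_\infty}^{1/2} \lesssim |X^*_\infty|\,\norms{[X]_\infty}^{1/2}$ and AM--GM; the paper merely compresses the Carnot--Carath\'eodory/Chen reduction by citing the proof of~\cite[Thm.~14.8]{FrizVictoir10}, which you spell out. One small correction: your closing remark misattributes the AM--GM/moderation step to Lemma~\ref{lem:interpolation}, whose actual role is the subsequent passage from this uniform estimate to $p$-variation control via stopping times.
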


\begin{proof}
For a process $M$, denote $M_t^* := \sup_{0 \leq s \leq t}|M_s|$. Following the proof of~\cite[Thm.~14.8]{FrizVictoir10}, it suffices to show that
\[
\EEE{F (|A_\infty^*|)} \leq c_2 \EEE{F (\norms{[X]_\infty})}. 
\]
However we have
\[
\norms{[A]_\infty}^{1/2} \leq c_1|X_\infty^*|\norms{[X]_\infty}^{1/2} \leq c_1(|X_\infty^*|^2 + \norms{[X]_\infty}),
\]
so one can apply the classical BDG inequality (e.g.,~\cite[Thm.~2.1]{Lenglart80}) to $A$ and $F$, and to $X$ and $F(|\cdot|^2)$, which is also a convex moderate function, to obtain 
\[
\EEE{F(|A^*_\infty|)} \leq c_2\EEE{F\left(\norms{[A]_\infty}^{1/2}\right)} \leq c_3\EEE{F(|X^*_\infty|^2) + F(\norms{[X]_\infty})} \leq c_4\EEE{F(\norms{[X]_\infty})}.
\]
\end{proof}

The following lemma is the crucial step in establishing finite $p$-variation of the lift of a local martingale.

\begin{lemma}[Interpolation]\label{lem:interpolation}
For every $2 < q < p < r$ there exists $C = C(p,q,r)$ such that for every $\R^d$-valued local martingale $X$
\[
\E\big[\norm{\X}_{\pvar}^p\big] \leq C\E\big[\norms{[X]_\infty}^{q/2} + \norms{[X]_\infty}^{r/2}\big].
\]
\end{lemma}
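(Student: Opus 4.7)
My strategy is to combine the uniform enhanced BDG of Lemma~\ref{lem:unifBDG} with a deterministic $p$-variation interpolation and then extract the two bracket moments of the right-hand side via H\"older's inequality. The starting point is the elementary deterministic bound
\[
\norm{\X}_{\pvar;[0,\infty)}^p \;\le\; \Big(\sup_{0\le s\le t}\norm{\X_{s,t}}\Big)^{p-q}\,\norm{\X}_{\qvar;[0,\infty)}^q,
\]
which follows from $\sum_i a_i^p \le (\max_i a_i)^{p-q}\sum_i a_i^q$ applied to the CC-norm increments of $\X$ along an arbitrary partition. This holds in any metric group setting and is therefore available verbatim for the Marcus lift $\X$.

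Taking expectations and applying H\"older's inequality with the conjugate pair $\lambda=r/(p-q)$, $\lambda'=r/(r-p+q)$ (both are genuinely $>1$ since $q<p<r$) yields
\[
\E\big[\norm{\X}_{\pvar}^p\big] \;\le\; \E\Big[\sup_{s\le t}\norm{\X_{s,t}}^{r}\Big]^{(p-q)/r}\cdot\E\big[\norm{\X}_{\qvar}^{s}\big]^{(r-p+q)/r}, \qquad s:=\tfrac{qr}{r-p+q}\in(q,r).
\]
For the first factor on the right, Lemma~\ref{lem:unifBDG} with the convex moderate $F(u)=u^{r/2}$ (note $r>2$) delivers $\E[\sup\norm{\X_{s,t}}^{r}]\lesssim \E[\norms{[X]_\infty}^{r/2}]$, producing one of the two bracket moments asserted in the statement.

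For the second factor I would exploit the structure of the Marcus lift by estimating the CC-norm via $\norm{\X_{s,t}}^{2}\asymp |X_{s,t}|^2 + |\tilde A_{s,t}|$, where $\tilde A_{s,t}$ is the (centred) area increment
\[
\tilde A_{s,t} := A_t-A_s-\tfrac{1}{2}\bigl(X_s\otimes X_{s,t}-X_{s,t}\otimes X_s\bigr),
\]
which for each fixed $s$ is a local martingale in $t$. This gives, deterministically, $\norm{\X}_{\qvar}^q \lesssim \norm{X}_{\qvar}^q + \norm{\tilde A}_{(q/2)\text{-var}}^{q/2}$. The $X$-term is handled by classical L\'epingle BDG ($q>2$), yielding $\E[\norm{X}_{\qvar}^q]\lesssim \E[\norms{[X]_\infty}^{q/2}]$. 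The area term, being a martingale contribution with $[\tilde A]_\infty\lesssim |X^*_\infty|^2\,\norms{[X]_\infty}$, is then controlled by applying L\'epingle at a higher variation index to $\tilde A$ together with Cauchy-Schwarz on the bracket bound. After raising to the required $s/q$-th moment and using $\norms{[X]_\infty}^{s/2}\le \norms{[X]_\infty}^{q/2}+\norms{[X]_\infty}^{r/2}$ (valid for any $s\in[q,r]$), substitution into the H\"older split above produces the claimed inequality.

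\textbf{Main obstacle.} The delicate point is closing the $q$-variation factor when $q$ is near $2$, so that $q/2$ falls into the Young regime where L\'epingle's BDG is not directly available for $\tilde A$. My plan here is either to iterate the interpolation step (bounding $\norm{\tilde A}_{(q/2)\text{-var}}$ in terms of $\sup|\tilde A|$ and a higher-variation norm to which L\'epingle does apply), or to apply Lemma~\ref{lem:unifBDG} to $\tilde A$ itself after a stopping-time localisation at large jumps of $X$; bookkeeping of the resulting H\"older exponents so that everything collapses cleanly into only the $q/2$- and $r/2$-bracket moments is the technically subtle step.
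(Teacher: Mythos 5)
The proof you outline does not close, and the gap you flag in your last paragraph is not a technicality but the whole content of the lemma; moreover the paper's argument is structurally different from yours in a way that sidesteps it entirely.

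Your plan hinges on controlling $\E\big[\norm{\X}_{\qvar}^{s}\big]$ for some $q\in(2,p)$ and $s>q$, via the decomposition $\norm{\X}_{\qvar}^q\lesssim\norm{X}_{\qvar}^q+\norm{\tilde A}_{(q/2)\textnormal{-var}}^{q/2}$. The first-level term is indeed reachable by classical L\'epingle. But the $(q/2)$-variation of the two-parameter area process is \emph{precisely} what the lemma is meant to produce an a priori bound on (recall $\norm{\X}_{\pvar}^p\asymp\norm{X}_{\pvar}^p+\norm{\tilde A}_{(p/2)\textnormal{-var}}^{p/2}$); you are trying to bound it at a \emph{smaller} variation exponent $q/2<p/2$, which is a strictly stronger statement. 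Neither of your proposed fixes works: Lemma~\ref{lem:unifBDG} only controls $\sup_{s,t}\norm{\X_{s,t}}$, not any variation norm, and applying it to $\tilde A$ still yields a sup bound; iterating the interpolation step pushes the needed variation index towards $2$, where no estimate is available to start the induction. L\'epingle applied to the one-parameter process $t\mapsto\tilde A_{0,t}$ controls a different quantity and leaves the Chen cross-term $[X_{0,s},X_{s,t}]$ uncontrolled at exponent $q/2$. In short, the scheme is self-referential: the bound you need on the right-hand side of H\"older's inequality is (a version of) the lemma itself, which is only proved later in the paper as Theorem~\ref{thm:pvarBDG}.

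The paper avoids all of this with a genuinely different device. It introduces stopping times $\tau^\delta_j$ marking successive CC-oscillations of $\X$ of size $\delta$, and lets $\nu(\delta)$ count them. The deterministic bound
\[
\norm{\X}_{\pvar}^p\;\leq\;\sum_{k\in\Z}2^{p(k+1)}\,\nu(2^k)
\]
replaces your interpolation inequality, and the crucial probabilistic input is
\[
\E\big[\nu(\delta)\big]\;\leq\;c_\alpha^{-1}\delta^{-2\alpha}\,\E\big[\norms{[X]_\infty}^\alpha\big],
\]
which follows by applying Lemma~\ref{lem:unifBDG} with $F=|\cdot|^\alpha$ to each inter-stopping-time local martingale $Y^j$ and using $\sum_j[Y^j]_\infty=[X]_\infty$ together with $\sup_{s,t}\|\Y^j_{s,t}\|\geq\delta$ for $j<\nu(\delta)$. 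Summing the geometric series over $k\leq 0$ with $2\alpha=q<p$ and over $k>0$ with $2\beta=r>p$ produces exactly the two bracket moments in the statement. No a priori finiteness of any variation norm of $\X$ is assumed anywhere; the only probabilistic tool needed is the sup-in-time BDG of Lemma~\ref{lem:unifBDG}. This is the key structural point your approach misses: the stopping-time decomposition converts the sup-over-time bound into a variation bound directly, without ever needing a variation estimate at a lower exponent as an intermediate input.
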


\begin{proof}
For $\delta > 0$ define the increasing sequence of stopping times $(\tau^\delta_j)_{j=0}^\infty$ by $\tau^\delta_0 = 0$ and for $j \geq 1$
\[
\tau^\delta_j = \inf \big\{t \geq \tau^\delta_{j-1} \mid \sup_{u,v \in [\tau^b_{j-1},t]} d(\X_u,\X_v) > \delta\big\}
\]
(where $\inf$ of the empty set is $\infty$). Define further $\nu(\delta) := \inf\{j \geq 0 \mid \tau^\delta_j = \infty\}-1$. Observe that (cf.~\cite[p.~10]{Chevyrev15})
\begin{equation}\label{eq:pVarBoundNuM}
\norm{\X}^p_{\pvar} \leq \sum_{k = -\infty}^\infty 2^{p(k+1)}\nu(2^k).
\end{equation}

Fix $\delta > 0$ and denote $\tau_j := \tau^\delta_j$. For $j=0,1,\ldots$ consider the sequence of local martingales $Y^j_t := X_{(\tau_j+t)\wedge \tau_{j+1}} - X_{\tau_j}$. Denote by $\Y^j_t$ the lift of $Y^j_t$, which coincides with $\X_{\tau_j, (\tau_j + t)\wedge \tau_{j+1}}$.

It holds that
\[
\sum_{j=0}^\infty [Y^j]_\infty = \sum_{j=0}^{\nu(\delta)} [Y^j]_\infty = [X]_\infty,
\]
and moreover $\sup_{s,t \geq 0} \|\Y^j_{s,t}\| \geq \delta$ for all $j = 0, \ldots, \nu(\delta)-1$. Thus by the uniform enhanced BDG inequality (Lemma~\ref{lem:unifBDG}) with $F = |\cdot|^{\alpha}$, $\alpha \geq 1$,
\begin{equation*}
\EEE{\norms{[X]_\infty}^\alpha} \geq \E\Big[\sum_{j=0}^\infty \norms{[Y^j]_\infty}^\alpha\Big] \geq c_\alpha \E\Big[\sum_{j=0}^\infty \sup_{s,t \geq 0} \|\Y^j_{s,t}\|^{2\alpha}\Big] \geq c_\alpha\delta^{2\alpha}\EEE{\nu(\delta)}.
\end{equation*}
It follows that for $2 \leq 2\alpha < p$
\[
\E\Big[\sum_{k \leq 0} 2^{p(k+1)}\nu(2^k)\Big] \leq c_\alpha^{-1}\EEE{\norms{[X]_\infty}^\alpha} \sum_{k \leq 0} 2^{p(k+1)-2\alpha k} \leq C(\alpha, p) \EEE{\norms{[X]_\infty}^\alpha},
\]
and likewise for $2\beta > p$
\[
\E\Big[\sum_{k > 0} 2^{p(k+1)}\nu(2^k)\Big] \leq C(\beta, p) \E\big[\norms{[X]_\infty}^\beta\big].
\]
Taking $2\alpha = q$ and $2\beta = r$, the conclusion follows from the estimate~\eqref{eq:pVarBoundNuM}.
\end{proof}

\begin{corollary}\label{cor:semimartPvar}
For every $\R^d$-valued semimartingale $X$, $p > 2$, and $T > 0$, it holds that a.s.
\[
\norm{\X}_{\pvar;[0,T]} < \infty.
\]
\end{corollary}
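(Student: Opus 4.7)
The plan is to reduce the statement to Lemma~\ref{lem:interpolation} (which covers local martingales under moment assumptions on the bracket) via a preliminary decomposition of $X$ and the translation operator of Definition~\ref{def:translation}.

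First I would decompose $X = M + V$ where $M$ is a local martingale with jumps bounded by $1$ and $V$ is an adapted càdlàg process of finite variation on $[0,T]$. This is standard: the process $W_t := \sum_{s \leq t}\Delta X_s \1{|\Delta X_s| > 1}$ has only finitely many jumps on $[0,T]$ a.s., so $X - W$ is a special semimartingale with jumps bounded by $1$ and admits a canonical decomposition $X - W = M + A$ with $A$ predictable of finite variation; one sets $V := A + W$. The Marcus lift $\M$ of $M$ is Marcus-like in the sense of Definition~\ref{def:RPs}, since its jumps $\exp(\Delta M_t)$ carry no area, and by the translation relation between Marcus lifts noted after equation~\eqref{eq:stochArea} one has $\X = T_V(\M)$. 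Since $p > 2$ implies $1/p + 1/1 > 1$, Proposition~\ref{prop:translation} applied with $q = 1$ gives the pathwise bound
\[
\norm{\X}_{\pvar;[0,T]} \lesssim \norm{\M}_{\pvar;[0,T]} + \norm{V}_{\onevar;[0,T]}
\]
whenever the right-hand side is finite. Since $\norm{V}_{\onevar;[0,T]} < \infty$ a.s.\ by construction of $V$, the problem reduces to the same statement for $\M$.

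For the local martingale piece I would localize by $\tau_n := \inf\{t : |M_t| \vee |M_{t-}| \geq n\}$. Because $|\Delta M| \leq 1$, the stopped martingale $M^{\tau_n}$ satisfies $|M^{\tau_n}| \leq n+1$, so the classical BDG inequality yields finite moments $\E\left[[M^{\tau_n}]_T^{r/2}\right] < \infty$ for every $r \geq 2$. Applying Lemma~\ref{lem:interpolation} to $M^{\tau_n}$ with any choice $2 < q' < p < r$ gives $\E\left[\norm{\M^{\tau_n}}_{\pvar;[0,T]}^p\right] < \infty$, so in particular $\norm{\M^{\tau_n}}_{\pvar;[0,T]} < \infty$ a.s. Since $M$ is càdlàg we have $\sup_{t \leq T}|M_t| < \infty$ a.s., hence $\P(\tau_n > T) \to 1$; on $\{\tau_n > T\}$ the lifts $\M$ and $\M^{\tau_n}$ agree on $[0,T]$, and the desired conclusion follows by exhaustion.

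The main obstacle I expect is the unbounded-jumps issue: Lemma~\ref{lem:interpolation} requires integrability of $[M]_T$, which is not automatic for a generic local martingale whose jumps are unbounded. The preliminary absorption of the large jumps into the finite-variation piece $V$ is precisely what renders the leftover local martingale $M$ locally bounded after a single stopping, and Proposition~\ref{prop:translation} is the right tool to transfer finiteness of $p$-variation from $\M$ back to $\X$ without having to analyze the interaction of $M$ and $V$ at the level of areas directly.
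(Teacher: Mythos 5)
Your proposal is correct and follows essentially the same route as the paper: decompose $X$ into a finite-variation part plus a local martingale with uniformly bounded jumps, obtain finite $p$-variation of the martingale lift via localization and Lemma~\ref{lem:interpolation}, and transfer to $\X$ through the translation operator $T_V$ (Proposition~\ref{prop:translation}). The only quibble is cosmetic: your explicit construction bounds $|\Delta M|$ by $2$ rather than $1$ (the compensator $A$ can also jump), but any fixed constant suffices for the argument.
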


\begin{proof}
Note that $X$ can be decomposed into a process $K$ of finite variation and a local martingale $L$ with jump sizes bounded by a positive constant (see, e.g.,~\cite[Prop.~4.17, p.~42]{JacodShiryaev03}). Denoting by $\LLL$ the lift of $L$, it follows from a localization argument and Lemma~\ref{lem:interpolation}, that $\norm{\LLL}_{\pvar;[0,T]} < \infty$ a.s.. The conclusion follows by observing that $\X = T_K(\LLL)$.
\end{proof}

\begin{lemma}[Chebyshev inequality]\label{lem:Chebyshev}
For all $p > 2$, there exists a constant $A > 0$ such that for every $\R^d$-valued local martingale $X$ and $\lambda > 0$,
\[
\PP\big[\norm{\X}_{\pvar} > \lambda\big] \leq \frac{A}{\lambda^2}\EEE{\norms{[X]_\infty}}.
\]
\end{lemma}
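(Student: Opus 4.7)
My plan is to combine a maximal inequality at the top scale with the dyadic oscillation counts $\nu(\delta)$ already analyzed in the proof of Lemma~\ref{lem:interpolation}. After a standard localization (the bound being vacuous if $\EEE{\norms{[X]_\infty}} = \infty$), set $\overline{M} := \sup_{s,t \geq 0} \norm{\X_{s,t}}$. Applying the uniform enhanced BDG inequality (Lemma~\ref{lem:unifBDG}) with $F(x) = x$ gives $\E[\overline{M}^2] \leq C \EEE{\norms{[X]_\infty}}$, and Markov's inequality then yields
\[
\PP\bigl[\overline{M} > \lambda/2\bigr] \leq \frac{4C}{\lambda^2}\, \EEE{\norms{[X]_\infty}}.
\]

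On the complementary event $\{\overline{M} \leq \lambda/2\}$, every increment satisfies $\norm{\X_{u,v}} \leq \overline{M} \leq \lambda/2$, so in the notation of the proof of Lemma~\ref{lem:interpolation} one has $\nu(2^k) = 0$ for all $k$ with $2^k \geq \lambda/2$. Starting from the deterministic dyadic bound~\eqref{eq:pVarBoundNuM} (which thus reduces on this event to a sum over $k$ with $2^k < \lambda$) and invoking
\[
\E[\nu(\delta)] \leq C\, \delta^{-2}\, \EEE{\norms{[X]_\infty}},
\]
which is precisely the $\alpha = 1$ case of the estimate $\delta^{2\alpha} \E[\nu(\delta)] \leq C_\alpha \EEE{\norms{[X]_\infty}^\alpha}$ derived inside the proof of Lemma~\ref{lem:interpolation} via the uniform BDG, a Markov step at exponent $p$ gives
\[
\PP\bigl[\norm{\X}_\pvar > \lambda,\; \overline{M} \leq \lambda/2\bigr]
\leq \lambda^{-p}\, C \sum_{k:\, 2^k < \lambda} 2^{(p-2)k}\, \EEE{\norms{[X]_\infty}}
\leq \frac{C'}{\lambda^2}\, \EEE{\norms{[X]_\infty}},
\]
the geometric series being convergent and dominated by its top term $\sim \lambda^{p-2}$ exactly when $p > 2$. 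Summing the two probability estimates finishes the argument.

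I do not expect a substantive obstacle here: the only slightly delicate point is recognizing that the $\alpha = 1$ instance of the oscillation estimate behind Lemma~\ref{lem:interpolation} is the right tool at small scales, and that restricting to $\{\overline{M} \leq \lambda/2\}$ kills the divergent tail $\sum_{k > \log_2 \lambda} 2^{(p-2)k}$ that would otherwise ruin the bound. The role of the assumption $p > 2$ is exactly the same as in Lemma~\ref{lem:interpolation}, and the overall structure faithfully mirrors the L\'epingle/Friz--Victoir proof template now adapted to the c\`adl\`ag lift $\X$.
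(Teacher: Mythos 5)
Your argument is correct, and all the ingredients you invoke are indeed available in the paper: Lemma~\ref{lem:unifBDG} with the convex moderate function $F(x)=x$ gives $\E\big[\sup_{s,t\geq 0}\norm{\X_{s,t}}^2\big]\lesssim \EEE{\norms{[X]_\infty}}$, the bound $\E[\nu(\delta)]\leq C\delta^{-2}\EEE{\norms{[X]_\infty}}$ is exactly the $\alpha=1$ instance established inside the proof of Lemma~\ref{lem:interpolation}, and on $\{\sup_{s,t}\norm{\X_{s,t}}\leq\lambda/2\}$ one indeed has $\nu(2^k)=0$ for $2^k\geq\lambda/2$, so that~\eqref{eq:pVarBoundNuM} truncates and the series $\sum_{2^k<\lambda}2^{(p-2)k}\lesssim\lambda^{p-2}$ converts the Markov step at exponent $p$ into the desired $\lambda^{-2}$ bound; this is precisely where $p>2$ is used. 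The paper itself gives no details and simply defers to L\'epingle's Lemma~1 and Friz--Victoir Lemma~14.10; that classical route shares the same engine (the dyadic oscillation count and the estimate $\E[\nu(\delta)]\lesssim\delta^{-2}\EEE{\norms{[X]_\infty}}$) but cuts the top scales differently, namely via the stopping time at which $\norm{\X}_{\pvar;[0,t]}$ first exceeds $\lambda$, so that all oscillations on the stopped interval are of order $\lambda$ (plus a jump term) --- which is exactly why the paper's one-line proof stresses that it ``crucially uses'' the a.s.\ finiteness of $\norm{\X}_{\pvar}$. Your variant replaces that stopping-time cutoff by conditioning on the maximal increment and one extra application of the uniform enhanced BDG inequality; this makes the proof independent of the a priori finiteness statement and avoids the jump-at-the-stopping-time bookkeeping of the c\`adl\`ag setting, at essentially no extra cost. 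So the two proofs differ only in this one (genuine but modest) step, and yours is a legitimate, self-contained alternative.
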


\begin{proof}
This crucially uses that $\X$ has finite $p$-variation for every local martingale $X$, and follows in exactly the same manner as~\cite[Lem.~1]{Lepingle76} or~\cite[Lem.~14.10]{FrizVictoir10}.
\end{proof}

\begin{lemma}[Burkholder~\cite{Burkholder73} Lemma~7.1] \label{lem:Burkholder}
Suppose $X$ and $Y$ are non-negative random variables, $F$ is a moderate function, and $\beta > 1$ and $\delta, \varepsilon, \gamma, \eta > 0$ such that $\gamma\varepsilon < 1$,
\[
F(\beta\lambda) \leq \gamma F(\lambda), \; \; F(\delta^{-1}\lambda) \leq \eta F(\lambda), \; \; \forall \lambda > 0,
\]
and
\[
\PPP{X > \beta\lambda, Y < \delta\lambda} \leq \varepsilon \PPP{X > \lambda}, \; \; \forall \lambda > 0.
\]
Then
\[
\EEE{F(X)} \leq \frac{\gamma\eta}{1-\gamma\varepsilon} \EEE{F(Y)}.
\]
\end{lemma}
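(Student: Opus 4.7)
The key identity is $\EEE{F(Z)} = \int_0^\infty \PPP{Z > \lambda}\, dF(\lambda)$ for any non-negative $Z$ (valid for any continuous moderate $F$ with $F(0)=0$, via Fubini in $\{(\omega,\lambda) : Z(\omega) > \lambda\}$). The plan is to apply this to $X/\beta$, use the good-lambda hypothesis inside the integral, and then extract constants using the growth bounds on $F$.

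First I would write the basic distributional split
\[
\PPP{X > \beta\lambda} \leq \PPP{X > \beta\lambda,\, Y < \delta\lambda} + \PPP{Y \geq \delta\lambda} \leq \varepsilon\PPP{X > \lambda} + \PPP{Y \geq \delta\lambda},
\]
using the last hypothesis. Integrating against $dF(\lambda)$ on $(0,\infty)$ gives
\[
\EEE{F(X/\beta)} = \int_0^\infty \PPP{X > \beta\lambda}\, dF(\lambda) \leq \varepsilon\, \EEE{F(X)} + \EEE{F(Y/\delta)}.
\]
Next I would apply the growth conditions on $F$ pointwise in $\omega$: $F(X) = F(\beta \cdot X/\beta) \leq \gamma F(X/\beta)$ yields $\EEE{F(X)} \leq \gamma\, \EEE{F(X/\beta)}$, and $F(Y/\delta) = F(\delta^{-1} Y) \leq \eta F(Y)$ yields $\EEE{F(Y/\delta)} \leq \eta\, \EEE{F(Y)}$. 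Combining these three estimates gives
\[
\EEE{F(X)} \leq \gamma\varepsilon\, \EEE{F(X)} + \gamma\eta\, \EEE{F(Y)},
\]
and rearranging (using $\gamma\varepsilon < 1$) produces the claimed bound, \emph{provided} $\EEE{F(X)}$ is finite so subtraction is legitimate.

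The only subtlety, and the step where a little care is needed, is this finiteness assumption. I would handle it by a truncation argument: replace $X$ by $X_n := X \wedge n$, which is bounded so $\EEE{F(X_n)} \leq F(n) < \infty$. For $\lambda < n/\beta$ we have $\{X_n > \lambda\} = \{X > \lambda\}$ and $\{X_n > \beta\lambda\} = \{X > \beta\lambda\}$, so the good-lambda hypothesis transfers to $(X_n, Y)$; for $\lambda \geq n/\beta$ the left-hand side is empty and the inequality is trivial. The argument above then applies to $(X_n, Y)$ to give $\EEE{F(X_n)} \leq \frac{\gamma\eta}{1-\gamma\varepsilon}\EEE{F(Y)}$, and monotone convergence as $n \to \infty$ delivers the conclusion.

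I don't expect a real obstacle here; the ingredients are (i) the layer-cake formula for $\EEE{F(X)}$, which is where moderateness/continuity of $F$ matters, and (ii) the pointwise growth bounds used to convert $F(X/\beta)$ and $F(Y/\delta)$ back to $F(X)$ and $F(Y)$. The only thing to be careful about is not to write $\EEE{F(X)} - \gamma\varepsilon\EEE{F(X)}$ before knowing $\EEE{F(X)} < \infty$, which is why the truncation step is included.
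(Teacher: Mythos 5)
The paper does not prove this lemma; it states it verbatim as a citation to Burkholder's Lemma~7.1, so there is no in-paper argument to compare against. Your proof is correct and is precisely the classical good-lambda argument found in Burkholder: the layer-cake representation $\EEE{F(Z)}=\int_0^\infty \PPP{Z>\lambda}\,dF(\lambda)$, the distributional split using the good-lambda hypothesis, the pointwise growth bounds $F(X)\leq\gamma F(X/\beta)$ and $F(Y/\delta)\leq\eta F(Y)$, and then rearrangement; your truncation via $X_n=X\wedge n$ to justify subtracting $\gamma\varepsilon\,\EEE{F(X)}$ is exactly the right device and you correctly checked that the good-lambda hypothesis passes to $(X_n,Y)$. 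Nothing to add.
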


\begin{lemma}\label{lem:previsBound}
Let $X$ be an $\R^d$-valued local martingale and $D$ an adapted, non-decreasing process such that a.s., $|\Delta X_t| \leq D_{t-}$ for all $t \geq 0$. Then for every moderate function $F$ (not necessarily convex), there exists $C = C(F) > 0$ such that
\[
\E\big[F(\norm{\X}_{\pvar})\big] \leq C\E\big[F(\norms{[X]_\infty}^{1/2} + D_\infty)\big].
\]
\end{lemma}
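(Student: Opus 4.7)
The plan is to follow the classical Lépingle--Burkholder scheme, adapted to the rough path setting and to the extra variable $D$ controlling jumps. Writing $M := \|\X\|_{\pvar}$ and $N := [X]_\infty^{1/2} + D_\infty$, I would apply Lemma~\ref{lem:Burkholder} after establishing the good-$\lambda$ inequality
$$\PP[M > \beta\lambda, N < \delta\lambda] \leq \varepsilon(\beta,\delta)\PP[M > \lambda], \quad \forall \lambda > 0,$$
for some $\beta > 1$ and $\varepsilon(\beta,\delta) \to 0$ as $\delta \to 0$; the conclusion then follows by choosing $\beta$ so that moderateness of $F$ gives $F(\beta\lambda) \leq \gamma F(\lambda)$, and $\delta$ small enough that $\gamma\varepsilon < 1$.

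To prove the good-$\lambda$ bound, I would introduce the stopping times
$$\sigma := \inf\{t : \|\X\|_{\pvar;[0,t]} > \lambda\}, \qquad R := \inf\{t : [X]_t^{1/2} + D_t \geq \delta\lambda\}.$$
On the good event $A_\lambda := \{M > \beta\lambda, N < \delta\lambda\}$ one has $\sigma < \infty$ and $R = \infty$, and in particular $\sigma < R$; monotonicity of $D$ and the hypothesis $|\Delta X_t| \leq D_{t-}$ give $|\Delta X_\sigma| \leq D_{\sigma-} \leq \delta\lambda$. Since $\X$ is Marcus-like, $d(\X_{\sigma-},\X_\sigma) = |\Delta X_\sigma|$, and combining with Minkowski's inequality for $p$-variation and the definition of $\sigma$ yields the deterministic bound $\|\X\|_{\pvar;[0,\sigma]} \leq (1+\delta)\lambda$ on $A_\lambda$.

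Next, consider the local martingale $\tilde X_t := X_{t \wedge R} - X_{t \wedge \sigma \wedge R}$, which vanishes identically on $\{\sigma = \infty\}$. A direct computation gives $[\tilde X]_\infty \leq [X]_R \leq [X]_{R-} + |\Delta X_R|^2 \leq 2\delta^2\lambda^2$ pointwise, where the two contributions are bounded by $(\delta\lambda)^2$ each using the definition of $R$ and $|\Delta X_R| \leq D_{R-} \leq \delta\lambda$. Because $\X$ is Marcus-like, the Marcus lift $\tilde\X$ coincides with the increment $\X_{\sigma,\cdot}$ on $[\sigma,R]$, so $\|\tilde\X\|_{\pvar} = \|\X\|_{\pvar;[\sigma,R]}$; on $A_\lambda$ the $p$-variation Minkowski inequality then gives $\|\tilde\X\|_{\pvar} \geq M - \|\X\|_{\pvar;[0,\sigma]} > (\beta - 1 - \delta)\lambda$.

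Finally, applying Lemma~\ref{lem:Chebyshev} to $\tilde X$ and using $[\tilde X]_\infty = [\tilde X]_\infty \1{\sigma < \infty}$ gives
$$\PP[A_\lambda] \leq \PP[\|\tilde\X\|_{\pvar} > (\beta-1-\delta)\lambda] \leq \frac{A\, \E[[\tilde X]_\infty]}{((\beta-1-\delta)\lambda)^2} \leq \frac{2A\delta^2}{(\beta-1-\delta)^2}\PP[\sigma < \infty],$$
which is the required good-$\lambda$ inequality since $\{\sigma < \infty\} \subseteq \{M > \lambda\}$. The main obstacle is the identification $\|\tilde\X\|_{\pvar} = \|\X\|_{\pvar;[\sigma,R]}$: this rests on the Marcus-like property of $\X$ (so that after subtracting $X_\sigma$ the area contributions still match), and on the careful jump bookkeeping at $\sigma$ and $R$, where the hypothesis $|\Delta X_t| \leq D_{t-}$ enters twice---once to bound the jump of $\X$ at $\sigma$ by $\delta\lambda$, and once to control the added bracket mass $|\Delta X_R|^2$ at the closing stopping time.
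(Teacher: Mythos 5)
Your proof is correct and follows essentially the same route as the paper's: the same three stopping times (your $\sigma$, $R$ are the paper's $S$, $R$; the paper also names $T$, which you replace by working directly on the event), the same auxiliary local martingale obtained by restarting $X$ at $\sigma$ and stopping at $R$, the same use of Lemma~\ref{lem:Chebyshev} to bound the resulting good-$\lambda$ inequality, and the same invocation of Lemma~\ref{lem:Burkholder}. The only cosmetic differences are $+$ versus $\vee$ in the definition of $R$, a slightly different intermediate bookkeeping of the $p$-variation Minkowski estimates, and your dropping of the multiplicative constant $c$ in $\norm{\Delta\X_t} = c\norms{\Delta X_t}$ (which the paper keeps explicit).
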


\begin{proof}
We follow closely the proof of~\cite[Prop.~2]{Lepingle76} and~\cite[Thm.~14.12]{FrizVictoir10}.
Since $\X$ is Marcus-like, i.e., $\log(\Delta\X_t) \in \R^d$, there exists a constant $c>0$ such that for all $t \geq 0$
\begin{equation}\label{eq:jumpBound}
\norm{\Delta\X_t} = c|\Delta X_t| \leq c\norms{[X]_t}^{1/2}.
\end{equation}
Let $\delta > 0$, $\beta > c\delta + 1$, $\lambda > 0$, and define the stopping times
\begin{align*}
T &= \inf \big\{ t \geq 0 \mid \norm{\X}_{\pvar;[0,t]} > \beta\lambda \big\}, \\
S &= \inf \big\{ t \geq 0 \mid \norm{\X}_{\pvar;[0,t]} > \lambda \big\}, \\
R &= \inf \big\{ t \geq 0 \mid D_t \vee \norms{[X]_t}^{1/2} > \delta\lambda \big\}.
\end{align*}
Define the local martingale $N_t = X_{(t + S)\wedge R} - X_{S\wedge R}$ with lift $\Nbf$ and note that
\[
\norm{\Nbf}_{\pvar} \geq \norm{\X}_{\pvar;[0,R]} - \norm{\X}_{\pvar;[0,R\wedge S]}.
\]
On the event $\{T < \infty, R = \infty\}$, we have $\Delta X_S \leq D_{S-} \leq \delta\lambda$, and so from~\eqref{eq:jumpBound}
\[
\norm{\Nbf}_{\pvar} \geq \beta \lambda - \lambda - \norm{\Delta \X_S} \geq (\beta - 1 - c\delta)\lambda.
\]
By Lemma~\ref{lem:Chebyshev}, it follows that
\[
\PPP{T < \infty, R = \infty} \leq \PP\big[\norm{\Nbf}_{\pvar} > (\beta - c\delta - 1)\lambda\big] \leq \frac{A}{(\beta - c\delta - 1)^2\lambda^2} \EEE{\norms{[N]_\infty}}.
\]
On the event $\{S = \infty\}$, it holds that $N \equiv 0$, whilst on $\{S < \infty\}$, we have $D_{R-} \leq \delta\lambda$ and thus
\[
\norms{[N]_\infty} = \norms{[X]_R - [X]_{R\wedge S}} \leq \norms{[X]_{R-}} + |\Delta X_R|^2 \leq 2\delta^2\lambda^2.
\]
It follows that
\[
\EEE{\norms{[N]_\infty}} \leq 2\delta^2\lambda^2\PPP{S < \infty},
\]
and thus we have for all $\lambda > 0$
\[
\PP\big[\norm{\X}_{\pvar} > \beta\lambda, D_\infty\vee [X]_\infty^{1/2} \leq \delta\lambda\big] \leq \frac{2A\delta^2}{(\beta - c\delta - 1)^2}\PP\big[\norm{\X}_{\pvar} > \lambda\big].
\]
The conclusion now follows by applying Lemma~\ref{lem:Burkholder}.
\end{proof}

\begin{theorem}[$p$-variation rough path BDG]\label{thm:pvarBDG}
For every convex moderate function $F$ and $p > 2$ there exists $c, C > 0$ such that for every $\R^d$-valued local martingale $X$
\[
c\E\big[F\big(\norms{[X]_\infty}^{1/2}\big)\big] \leq \E\big[F \big(\norm{\X}_{\pvar} \big)\big] \leq C\E\big[F \big(\norms{[X]_\infty}^{1/2} \big)\big].
\]
\end{theorem}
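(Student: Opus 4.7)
The proof splits cleanly into the lower and upper bounds.

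The lower bound is immediate: since $X$ is the projection of $\X$ onto the first level $\R^d$, one has $\norm{X}_{\pvar} \leq \norm{\X}_{\pvar}$, and the classical scalar-valued $p$-variation BDG inequality of L\'epingle~\cite{Lepingle76} gives
\[
c\E\big[F\big(\norms{[X]_\infty}^{1/2}\big)\big] \leq \E\big[F(\norm{X}_{\pvar})\big] \leq \E\big[F(\norm{\X}_{\pvar})\big].
\]

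For the upper bound, the obstruction to applying Lemma~\ref{lem:previsBound} directly is that the natural candidate dominator $Y_t := \sup_{s \leq t}|\Delta X_s|$ only satisfies $|\Delta X_t| \leq Y_t$, not $|\Delta X_t| \leq Y_{t-}$, since $Y$ jumps simultaneously with $X$. The remedy is Davis' decomposition: write $X = M + B$ as the sum of a local martingale $M$ with $|\Delta M_t| \leq 2Y_{t-}$ and an adapted process $B$ of bounded variation with $\norm{B}_{\onevar;[0,\infty)} \leq c Y_\infty$ almost surely. Applying Lemma~\ref{lem:previsBound} to $M$ with dominator $D_t := 2Y_t$ yields
\[
\E\big[F(\norm{\mathbf{M}}_{\pvar})\big] \leq C \E\big[F\big(\norms{[M]_\infty}^{1/2} + Y_\infty\big)\big].
\]

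Next, since $X = M + B$ with $B$ adapted of bounded variation, the discussion immediately after~\eqref{eq:stochArea} (together with the identity~\eqref{eq:translationIntegrals} of Proposition~\ref{prop:translation}) identifies $\X = T_B(\mathbf{M})$. Applying the estimate~\eqref{eq:translationInequality} of Proposition~\ref{prop:translation}, we obtain
\[
\norm{\X}_{\pvar} \lesssim \norm{\mathbf{M}}_{\pvar} + \norm{B}_{\onevar;[0,T]}.
\]
The elementary pointwise bounds $Y_\infty \leq \norms{[X]_\infty}^{1/2}$, $\norms{[M]_\infty}^{1/2} \leq \norms{[X]_\infty}^{1/2} + \norms{[B]_\infty}^{1/2}$ (Kunita--Watanabe), and $\norms{[B]_\infty}^{1/2} \leq \norm{B}_{\onevar;[0,\infty)} \leq c Y_\infty$ together show that each quantity on the right is dominated by a multiple of $\norms{[X]_\infty}^{1/2}$. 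Since $F$ is moderate (so that $F(a+b) \leq c F(a) + c F(b)$ up to a universal constant), these pointwise inequalities pass to the expectation and combine to yield $\E[F(\norm{\X}_{\pvar})] \leq C \E\big[F(\norms{[X]_\infty}^{1/2})\big]$.

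The main obstacle is the reduction step: finding a dominator $D$ compatible with the strict-predecessor requirement of Lemma~\ref{lem:previsBound}, which forces the invocation of Davis' decomposition, and then re-assembling the rough path lift via the translation operator of Proposition~\ref{prop:translation}. This is the genuinely new feature compared to the scalar L\'epingle argument: the bounded-variation correction $B$ cannot simply be added back at the path level but must be incorporated through the rough-path translation, whose $p$-variation estimate is precisely the content of~\eqref{eq:translationInequality}.
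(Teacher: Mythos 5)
Your overall architecture is the same as the paper's: Davis decomposition, Lemma~\ref{lem:previsBound} applied to the martingale part, and reassembly of the lift via the translation operator $\X = T_{B}(\mathbf{M})$ together with the estimate~\eqref{eq:translationInequality}; the lower bound via $\norm{X}_{\pvar}\leq\norm{\X}_{\pvar}$ and classical L\'epingle is fine. However, there is a genuine gap in the key step: you assert that the bounded variation part $B$ of the Davis decomposition satisfies $\norm{B}_{\onevar;[0,\infty)}\leq cY_\infty$ \emph{almost surely}. This is false in general. In the Davis decomposition $X = L + (K^1-K^2)$, with $K^1_t=\sum_{s\leq t}\Delta X_s\1{|\Delta X_s|\geq 2D_{s-}}$ and $D_t=\sup_{s\leq t}|\Delta X_s|$, only the raw big-jump part obeys a pathwise bound $|K^1|_{\onevar}\leq 4D_\infty$; the compensator $K^2$ (the dual previsible projection, which you cannot discard, since it is exactly what makes $M=X-(K^1-K^2)$ a local martingale) is \emph{not} pathwise dominated by $D_\infty$ --- its total variation is only controlled in expectation. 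Consequently your pointwise chain $\norms{[M]_\infty}^{1/2}\leq\norms{[X]_\infty}^{1/2}+\norm{B}_{\onevar}\lesssim\norms{[X]_\infty}^{1/2}$ does not hold $\omega$-wise, and the final passage to expectations collapses.

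This missing step is precisely where the paper uses the convexity of $F$: by the Garsia--Neveu lemma one gets $\E\big[F(|K^2|_{\onevar})\big]\leq C\,\E\big[F(|K^1|_{\onevar})\big]\leq C'\,\E\big[F\big(\norms{[X]_\infty}^{1/2}\big)\big]$, and only then can $|K|_{\onevar}$ (with $K=K^1-K^2$) be fed into the moderate-function additivity and the translation estimate $\norm{\X}_{\pvar}\lesssim\norm{\LLL}_{\pvar}+|K|_{\onevar}$. The fact that your argument never invokes convexity of $F$ is the warning sign: Lemma~\ref{lem:previsBound} holds for arbitrary moderate $F$, but the theorem is stated (and provable by this route) only for convex $F$, exactly because of the compensator estimate. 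To repair the proof, keep $K=K^1-K^2$, bound $\E[F(|K|_{\onevar})]$ via Garsia--Neveu (or an equivalent convexity argument), and combine this expectation bound --- not a pathwise one --- with the estimate for $\E[F(\norm{\LLL}_{\pvar})]$ coming from Lemma~\ref{lem:previsBound}.
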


\begin{proof}
This again follows very closely the proof of~\cite[Prop.~2]{Lepingle76}.
We may suppose $\E[\norms{[X]_\infty}^{1/2}] < \infty$ (otherwise all concerned quantities are infinite). 
Let $D_t := \sup_{0 \leq s \leq t} |\Delta X_t|$ and $K^1_t := \sum_{s \leq t} \Delta X_s \1{|\Delta X_s| \geq 2D_{s-}}$.
Note that $K^1$ is of integrable variation since
\[
|K^1|_{\onevar} \leq 4D_\infty \leq 4\norms{[X]_\infty}^{1/2},
\]
so there exists a unique previsible process $K^2$ such that $K^1-K^2$ is a martingale ($K^2$ is the dual previsible projection of $K^1$~\cite[Thm.~21.4]{Rogers00} and is a special case of the Doob--Meyer decomposition).
Finally, define the martingale $L := X - (K^1-K^2)$; recall that $X = (K^1 - K^2) + L$ is called the Davis decomposition of $X$.

Observe that $|\Delta L_t| \leq 4D_{t-}$. Indeed, $|\Delta(L-K^2)_t| \leq 2D_{t-}$ by construction, and, if $T$ is a stopping time, then either $T$ is totally inaccessible, in which case $\Delta K^2_T = 0$ since $K^2$ is previsible, or $T$ is previsible, in which case $|\Delta K^2_T| = |\EEE{\Delta (K^2-L)_T\mid \FF_{T-}}| \leq \EEE{2D_{T-}\mid \FF_{T-}}=2D_{T-}$, where $\FF$ is the underlying filtration (cf.~\cite[p.~80-81]{Meyer72}).
Hence, by Lemma~\ref{lem:previsBound}, we have
\[
\E\big[F\big(\norm{\LLL}_{\pvar}\big)\big] \leq C_1 \E\big[F\big(\norms{[L]_\infty}^{1/2} + D_\infty\big)\big].
\]
Since $D_\infty \leq \norms{[X]_\infty}^{1/2}$ and
\[
\norms{[L]_\infty}^{1/2} \leq \norms{[X]_\infty}^{1/2} + \norms{[K]_\infty}^{1/2} \leq \norms{[X]_\infty}^{1/2} + |K|_{\onevar},
\]
we have 
\[
\E\big[F\big(\norm{\LLL}_{\pvar}\big)\big] \leq C_2 \E\big[F\big(\norms{[X]_\infty}^{1/2} + |K|_{\onevar}\big)\big].
\]
Furthermore, since $F$ is convex, it follows from the Garsia-Neveu lemma (using the argument provided by~\cite[p.~306]{Lepingle76}) that
\[
\EEE{F\left(|K^2|_{\onevar}\right)} \leq C_3\EEE{F\left(|K^1|_{\onevar}\right)},
\]
and thus
\[
\EEE{F\left(|K|_{\onevar}\right)} \leq C_4\E\big[F\big(\norms{[X]_\infty}^{1/2}\big)\big].
\]
Finally, as $\X = T_K(\LLL)$, we obtain
\[
\E\big[F\big(\norm{\X}_{\pvar}\big)\big] \leq C_5\E\big[F\big(|K|_{\onevar} + \norm{\LLL}_{\pvar}\big)\big] \leq C_6\E\big[F\big(\norms{[X]_\infty}^{1/2}\big)\big].
\]
\end{proof}

It was seen in~\cite[Thm.~20]{FrizShekhar17} that every (level $M$) c\`{a}dl\`{a}g $p$-rough path $\mathbf{X}$ admits a unique minimal jump extension\ $\mathbf{\bar{X}}$ as a level-$N$ rough path, $N\geq M$.
(This generalizes Lyons' fundamental extension theorem to the jump setting.)
Moreover, it is clear from the proof of~\cite[Thm.~20]{FrizShekhar17}
\begin{equation*}
\| \mathbf{\bar{X}}\|_{p\text{-var}}\lesssim \| 
\mathbf{X}\|_{p\text{-var}}
\end{equation*}
(with a multiplicative constant that depends on $N,M$). Applied with $M=2$
and Marcus lift $\mathbf{X}=\mathbf{X}\left( \omega \right)$ of an $\R^d$-valued local martingale, we obtain the following form of the BDG inequality. 

\begin{corollary}[$p$-variation level-$N$ rough path BDG]
For every $N \geq 1$, convex moderate function $F$, and $p > 2$, there exists $c, C > 0$ such that for every $\R^d$-valued local martingale $X$
\begin{equation*}
c\E\big[ F\big( \norms{\left[ X\right] _{\infty }}^{1/2}\big) \big] \leq \E\big[ F\big( \| \mathbf{\bar{X}}\|_{\pvar}\big) \big] \leq C\E\big[ F\big( \norms{\left[ X\right]_{\infty }}^{1/2}\big) \big].
\end{equation*}
\end{corollary}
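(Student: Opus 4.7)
The plan is to reduce the level-$N$ statement to the level-$2$ statement (Theorem~\ref{thm:pvarBDG}) via the minimal jump extension theorem quoted from~\cite{FrizShekhar17}. Writing $\mathbf{X}$ for the Marcus lift of $X$ at level $2$ and $\bar{\mathbf{X}}$ for its minimal jump extension at level $N$, the two facts to use are: (i) $\bar{\mathbf{X}}$ projects to $\mathbf{X}$ on levels $1$ and $2$, so by the very definition of the inhomogeneous $p$-variation metric~\eqref{eq:rhoDef} as a maximum over levels, $\norm{\mathbf{X}}_{\pvar}\leq \norm{\bar{\mathbf{X}}}_{\pvar}$; and (ii) the quantitative bound $\norm{\bar{\mathbf{X}}}_{\pvar}\leq K\norm{\mathbf{X}}_{\pvar}$, where $K=K(N)$ is the implicit constant displayed just before the corollary.

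For the lower bound I would simply chain $F$-monotonicity with fact (i) and the lower half of Theorem~\ref{thm:pvarBDG}:
\[
c\,\E\bigl[F\bigl(\norms{[X]_\infty}^{1/2}\bigr)\bigr]
\;\leq\; \E\bigl[F(\norm{\mathbf{X}}_{\pvar})\bigr]
\;\leq\; \E\bigl[F(\norm{\bar{\mathbf{X}}}_{\pvar})\bigr].
\]
No moderateness of $F$ is needed beyond monotonicity here.

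For the upper bound I would combine fact (ii) with the moderate property of $F$. Since $F$ is moderate there exists $\gamma=\gamma(F,K)$ with $F(K\lambda)\leq \gamma F(\lambda)$ for all $\lambda\geq 0$ (iterate the doubling inequality $\log_2 K$ times, rounded up). Then
\[
\E\bigl[F(\norm{\bar{\mathbf{X}}}_{\pvar})\bigr]
\;\leq\; \E\bigl[F(K\norm{\mathbf{X}}_{\pvar})\bigr]
\;\leq\; \gamma\,\E\bigl[F(\norm{\mathbf{X}}_{\pvar})\bigr]
\;\leq\; \gamma C\,\E\bigl[F\bigl(\norms{[X]_\infty}^{1/2}\bigr)\bigr],
\]
the last step invoking the upper half of Theorem~\ref{thm:pvarBDG}.

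There is no real obstacle: the only thing to double-check is that ``minimal jump extension'' genuinely preserves the level-$2$ truncation (i.e.\ that $\bar{\mathbf{X}}$ restricted to $T^2(\R^d)$ equals $\mathbf{X}$), which is part of the definition in~\cite[Thm.~20]{FrizShekhar17}, and that the constant $K$ appearing in $\norm{\bar{\mathbf{X}}}_{\pvar}\lesssim \norm{\mathbf{X}}_{\pvar}$ depends only on $N$ (and $p$), not on $\omega$, so the $\gamma$ produced from the moderate hypothesis is a deterministic constant that can be absorbed into $C$.
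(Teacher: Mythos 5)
Your proposal is correct and is essentially the paper's own (largely implicit) argument: combine the level-$2$ BDG of Theorem~\ref{thm:pvarBDG} with the extension estimate $\norm{\mathbf{\bar{X}}}_{\pvar}\lesssim\norm{\X}_{\pvar}$ (deterministic constant depending on $N$) and absorb that constant using the moderate property of $F$, with the lower bound coming from monotonicity under projection. One cosmetic remark: the norm in the corollary is the homogeneous $p$-variation built from the Carnot--Carath\'eodory metric on $G^N(\R^d)$, not the inhomogeneous metric~\eqref{eq:rhoDef}, so fact (i) should be justified by noting that the projection $G^N(\R^d)\to G^2(\R^d)$ does not increase the CC norm (hence $\norm{\X}_{\pvar}\leq\norm{\mathbf{\bar{X}}}_{\pvar}$), which is standard.
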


This is a useful result in the study of expected signatures, which is, loosely speaking, the study of $\mathbb{E}\left[ \mathbf{\bar{X}}_{0,T}\right]$, with $\mathbf{\bar{X}}_{0,T}\in G^{N}(\R^{d}) \subset T^{N}(\R^{d})$ and component-wise expectation in the linear space $T^{N}(\R^{d})$. Since the norm of $\pi_{m}\left( \mathbf{\bar{X}}_{0,T}\right)$, the projection to $\left(\R^{d}\right) ^{\otimes m}$, is bounded (up to a constant) by $\| \mathbf{\bar{X}}\|_{p\text{-var;}\left[ 0,T\right] }^{m}$, we see that the very existence of the expected signature is guaranteed by the existence of all moments of $\left[ X\right] _{0,T}$. In a L{\'e}vy setting with triplet $\left( a,b,K\right) $, this clearly holds whenever $K\left( dy\right) 1_{\left[ \left\vert y\right\vert >1\right] }$ has moments of all orders. One can also apply this with a stopping time $T=T\left(\omega \right)$, e.g., the exit time of Brownian motion from a bounded domain. In either case, the expected signature is seen to exist (see also~\cite[Part~III]{FrizShekhar17} and~\cite{LyonsNi15} for more on this).

A motivation for the study of expected signatures comes from one of the main results of~\cite[Sec.~6]{ChevyrevLyons16} which provides a solution to the moment problem for (random) signatures, i.e., determines conditions under which the sequence of expectations $(\E[\pi_m\mathbf{\bar{X}}_{0,T}])_{m \geq 0}$ uniquely determines the law of the full signature of $\mathbf{\bar{X}}$ (see~\cite[Thm.~54]{FrizShekhar17} where the moment problem was discussed for the L{\'e}vy case, and~\cite{CassOgrodnik17, ChevyrevLyons16} for other families of random geometric rough paths).

\subsection{Convergence of semimartingales and the UCV condition}\label{subsec:convLoc}

As an application of the BDG inequality, we obtain a convergence criterion for lifted local martingales in the rough path space $(\DD^{\pvar}_\phi([0,T],G^2(\R^d)), \alpha_{\pvar})$ with a fixed path function $\phi$, which is the main result of this subsection.

We first recall the uniformly controlled variation (UCV) condition for a sequence of semimartingales $(X^n)_{n \geq 1}$. For $X \in D([0,T], \R^d)$ and $\delta > 0$, we define
\[
X^\delta_t = X_t - \sum_{s \leq t} (1-\delta/|\Delta X_s|)^+\Delta X_s.
\]
Note that $X \mapsto X^\delta$ is a continuous function on the Skorokhod space and $\sup_{t \in [0,T]}|\Delta X_t^\delta| \leq 1$ with $\Delta X^\delta_t = \Delta X_t$ whenever $|\Delta X_t| \leq \delta$.

\begin{definition}[UCV,~\cite{KP96} Definition~7.5]
We say that a sequence of semimartingales $(X^n)_{n \geq 1}$ satisfies UCV if there exists $\delta > 0$ such that for all $\alpha > 0$ there exist decompositions $X^{n,\delta} = M^{n,\delta} + K^{n,\delta}$ and stopping times $\tau^{n,\alpha}$ such that for all $t \geq 0$
\[
\sup_{n \geq 1} \PPP{\tau^{n,\alpha} \leq \alpha} \leq \frac{1}{\alpha}\; \; \textnormal{and} \; \; \sup_{n \geq 1} \E\big[M^{n,\delta}]_{t \wedge \tau^{n,\alpha}} + \norms{K^{n,\delta}}_{\onevar;[0,t\wedge \tau^{n,\alpha}]}\big] < \infty.
\]
\end{definition}

Recall the following result of Kurtz--Protter~\cite[Thm.~2.2]{KP91} (see also~\cite{KP96} Theorem~7.10 and p.~30).

\begin{theorem}\label{thm:KP91}
Let $X, (X^n)_{n \geq 1}, H, (H^n)_{n \geq 1}$ be c{\`a}dl{\`a}g adapted processes (with respect to some filtrations $\FF^n$). Suppose $(H^n, X^n)_{n \geq 1}$ converges in law (resp. in probability) to $(H,X)$ in the Skorokhod topology as $n \rightarrow \infty$, and that $(X^n)_{n \geq 1}$ is a sequence of c{\`a}dl{\`a}g semimartingales satisfying UCV. Then $X$ is a semimartingale (with respect to some filtration $\FF$) and $(H^n, X^n, \int_{0}^{\cdot} H^n_{t-} dX^n_t)$ converge in law (resp. in probability) to $(H,X,\int_0^\cdot H_{t-}dX_t)$ in the Skorokhod topology as $n \rightarrow \infty$.
\end{theorem}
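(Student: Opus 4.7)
The plan is to follow the classical Kurtz--Protter scheme, in three stages: (i) exploit the UCV decomposition to obtain uniform bounds and identify $X$ as a semimartingale, (ii) establish tightness of the triples $(H^n, X^n, Y^n)$ in Skorokhod space with $Y^n_t := \int_0^t H^n_{s-} dX^n_s$, and (iii) identify each subsequential limit $Y$ as $\int_0^\cdot H_{s-} dX_s$. By the standard localisation using the UCV stopping times $\tau^{n,\alpha}$, we may assume that for each fixed truncation level $\delta > 0$ the decompositions $X^{n,\delta} = M^{n,\delta} + K^{n,\delta}$ satisfy uniform $L^1$ bounds on $\norms{[M^{n,\delta}]_T}$ and on $\norms{K^{n,\delta}}_{\onevar;[0,T]}$, while the residual $X^n - X^{n,\delta}$ is a pure-jump process with a tight number of jumps on $[0,T]$. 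Passing these decompositions to the Skorokhod limit shows that $X$ itself is a semimartingale with respect to the natural filtration generated by $(H, X)$.

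For tightness of $(Y^n)$, split $Y^n = \int_0^\cdot H^n_{s-} dM^{n,\delta}_s + \int_0^\cdot H^n_{s-} dK^{n,\delta}_s + \int_0^\cdot H^n_{s-} d(X^n - X^{n,\delta})_s$. The martingale piece is controlled via the classical BDG inequality by $\EEE{\sup_{s\leq T}|H^n_s|\, \norms{[M^{n,\delta}]_T}^{1/2}}$, the bounded-variation piece by $\EEE{\sup_{s\leq T}|H^n_s|\, \norms{K^{n,\delta}}_{\onevar;[0,T]}}$, and the big-jump piece by its tight number of summands. Tightness of $(H^n)$ in the Skorokhod topology gives, after further stopping, $\sup_n \EEE{\sup_{s\leq T}|H^n_s|} < \infty$; combined with standard modulus-of-continuity estimates and Aldous- or Jakubowski-type criteria, this yields tightness of $(Y^n)$. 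Along a subsequence we then have $(H^n, X^n, Y^n) \to (H, X, Y)$ in Skorokhod topology; in the ``in law'' case Skorokhod's representation theorem provides an almost surely convergent realisation.

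It remains to identify $Y = \int_0^\cdot H_{s-} dX_s$. Fix a deterministic partition $\pi = \{0 = t_0 < \ldots < t_N = T\}$ each of whose points is a.s.\ a continuity point of both $H$ and $X$ (available for all but countably many $t$), and consider the Riemann sum $S^\pi(H^n, X^n)_t := \sum_i H^n_{t_i\wedge t}(X^n_{t_{i+1}\wedge t} - X^n_{t_i\wedge t})$. By Skorokhod convergence at continuity points, $S^\pi(H^n, X^n) \to S^\pi(H, X)$ in finite-dimensional distributions; and for each fixed $n$, $S^\pi(H^n, X^n) \to Y^n$ in probability as $|\pi| \to 0$. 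The main obstacle is that this last convergence must be \emph{uniform in $n$}: via BDG applied to the martingale part, uniform smallness reduces to controlling the $L^2$-oscillation of $H^n$ across cells of $\pi$ weighted against $d[M^{n,\delta}]$, which is uniformly small thanks to the UCV bound on $\norms{[M^{n,\delta}]_T}$ and the uniformly tight Skorokhod modulus of $(H^n)$; the bounded-variation piece is handled analogously via $\norms{K^{n,\delta}}_{\onevar;[0,T]}$, and the big-jump contribution is rendered negligible by refining $\pi$ so that each big jump lies in a partition cell of arbitrarily small $H^n$-oscillation. A diagonal argument then identifies $Y = \int_0^\cdot H_{s-} dX_s$; as every subsequential limit has this form, the full sequence $(H^n, X^n, Y^n)$ converges (in law, respectively in probability) to $(H, X, \int_0^\cdot H_{s-} dX_s)$.
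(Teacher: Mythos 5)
This statement is not proved in the paper at all: it is recalled verbatim from Kurtz--Protter (\cite[Thm.~2.2]{KP91}, see also \cite[Thm.~7.10]{KP96}) and used as a black box, so there is no in-paper argument to compare against. Your sketch does follow the broad classical strategy (localise via the UCV stopping times, control the martingale part by BDG and the finite-variation part by its total variation, prove tightness of the integrals, identify subsequential limits through discretisation), and the tightness stage is essentially fine.

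The genuine gap is in the identification step, precisely where you need convergence of Riemann sums to $\int H^n_{s-}\,dX^n_s$ \emph{uniformly in $n$} along a fixed deterministic partition $\pi$. Your claim that this follows from the UCV bound on $\norms{[M^{n,\delta}]_T}$ together with ``the uniformly tight Skorokhod modulus of $(H^n)$'' does not hold: the Skorokhod modulus $\omega'$ only controls the oscillation of $H^n$ on $n$- and $\omega$-dependent intervals that are allowed to break exactly at the (random) jump times of $H^n$, whereas over a fixed deterministic cell of $\pi$ the oscillation of $H^n$ is at least the size of its largest jump in that cell; and the measures $d[M^{n,\delta}]$, $dK^{n,\delta}$ and the big jumps of $X^n$ may charge precisely those times (common jumps of $H^n$ and $X^n$ are the typical situation, e.g.\ $H^n$ a function of $X^n$). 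Your proposed fix --- ``refining $\pi$ so that each big jump lies in a cell of arbitrarily small $H^n$-oscillation'' --- cannot be implemented with deterministic partitions, since the jump times move with $n$ and $\omega$. This is exactly why Kurtz--Protter do \emph{not} discretise in time: they discretise the integrand adaptively, replacing $H^n$ by $H^{n,\varepsilon}:=I_\varepsilon(H^n)$, piecewise constant between the successive stopping times at which the oscillation of $H^n$ exceeds $\varepsilon$. Then $\sup_t|H^n_t-H^{n,\varepsilon}_t|\leq\varepsilon$ uniformly in $n$, so the UCV/BDG estimates give a bound on $\int (H^n-H^{n,\varepsilon})_{-}\,dX^n$ that is uniform in $n$ and vanishes as $\varepsilon\to 0$, while $(h,x)\mapsto\bigl(h,x,\int I_\varepsilon(h)_{-}\,dx\bigr)$ is a.s.\ continuous (with respect to the limit law) on Skorokhod space, which handles the passage to the limit without evaluating at fixed times. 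A secondary soft spot: asserting that ``passing the decompositions to the limit'' shows $X$ is a semimartingale is too quick, since $M^{n,\delta}$ and $K^{n,\delta}$ need not converge individually; the standard route is to deduce from the uniform integral estimates that $X$ is a good integrator for the filtration generated by $(H,X)$ and invoke the Bichteler--Dellacherie characterisation.
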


We can now state the main result which allows us to pass from convergence in the Skorokhod topology to convergence in rough path topology (see also Corollary~\ref{cor:convAlpha}). 

\begin{theorem}\label{thm:UCV}
Let $(X^n)_{n \geq 1}$ be a sequence of semimartingales such that $X^n$ converges in law (resp. in probability) to a semimartingale $X$ in the Skorokhod topology. Suppose moreover that $(X^n)_{n \geq 1}$ satisfies the UCV condition. Then the lifted processes $(\X^n)_{n \geq 1}$ converge in law (resp. in probability) to the lifted process $\X$ in the Skorokhod space $D([0,T], G^2(\R^d))$. Moreover, for every $p > 2$, $(\norm{\X^n}_{\pvar})_{n \geq 1}$ is a tight collection of real random variables.
\end{theorem}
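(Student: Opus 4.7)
The plan splits the statement into two independent claims: (a) $\X^n \to \X$ in Skorokhod topology on $D([0,T], G^2(\R^d))$, and (b) tightness of the real random variables $\|\X^n\|_{\pvar}$. The input for (a) is Kurtz--Protter's Theorem~\ref{thm:KP91}, and for (b) it is the enhanced BDG inequality (Theorem~\ref{thm:pvarBDG}) together with the decomposition built into UCV and the translation bound of Proposition~\ref{prop:translation}. For part (a), I would apply Theorem~\ref{thm:KP91} with integrand $H^n := X^n$, itself a c\`adl\`ag adapted process. This yields joint Skorokhod convergence (in law, resp.\ in probability) of $(X^n, \int_0^\cdot X^n_{r-}\otimes dX^n_r)$ to $(X, \int_0^\cdot X_{r-}\otimes dX_r)$. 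Extracting the anti-symmetric component produces $A^n \to A$ in the same mode. Since the Marcus lift is $\X^n = \exp(X^n + A^n)$ and $\exp : T^2(\R^d) \to G^2(\R^d)$ is continuous, the continuous mapping theorem on Skorokhod space delivers (a).

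For (b), fix $\eta > 0$. Using UCV, choose $\alpha$ large so that $\sup_n \PP[\tau^{n,\alpha} \leq T] < \eta$. On $\{\tau^{n,\alpha} > T\}$ decompose $X^n = M^{n,\delta} + K^{n,\delta} + J^n$, where $J^n := X^n - X^{n,\delta}$ collects only jumps of size exceeding $\delta$, while $M^{n,\delta}$ and $K^{n,\delta}$ are the local-martingale and bounded-variation summands supplied by UCV. Let $\M^{n,\delta}$ be the Marcus lift of $M^{n,\delta}$, so that $\X^n = T_{K^{n,\delta}+J^n}(\M^{n,\delta})$ by the relation between the Marcus lift and the translation operator (discussed in Section~\ref{subsec:BDG}). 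Proposition~\ref{prop:translation} then gives
\begin{equation*}
\|\X^n\|_{\pvar;[0,T\wedge\tau^{n,\alpha}]} \lesssim \|\M^{n,\delta}\|_{\pvar;[0,T\wedge\tau^{n,\alpha}]} + \|K^{n,\delta}\|_{\onevar;[0,T\wedge\tau^{n,\alpha}]} + \|J^n\|_{\onevar;[0,T]}.
\end{equation*}
Theorem~\ref{thm:pvarBDG} applied to the stopped local martingale $M^{n,\delta}_{\cdot\wedge\tau^{n,\alpha}}$ with $F(x)=x$ bounds the first summand in $L^1$ by $C\,\E\bigl[[M^{n,\delta}]_{T\wedge\tau^{n,\alpha}}^{1/2}\bigr]$, which together with the second summand is uniformly bounded by UCV (and Jensen). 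A Markov estimate for each of the three summands, combined with the $\tau^{n,\alpha}$-exceptional set, then yields tightness of $\|\X^n\|_{\pvar;[0,T]}$.

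The main obstacle is the big-jump process $J^n$: one must show that both the number of jumps of $X^n$ of size exceeding $\delta$ on $[0,T]$ and their magnitudes form tight families, so that $\|J^n\|_{\onevar;[0,T]}$ is tight. The number of such jumps is controlled by the $w'$-modulus characterisation of Skorokhod tightness, which follows from convergence in law via Prokhorov (resp.\ directly from convergence in probability); the jump magnitudes are dominated by $2\sup_{t \in [0,T]}|X^n_t|$, itself a tight family by the same Skorokhod tightness. A minor technical check is that Proposition~\ref{prop:translation} genuinely applies in our setting: it requires $1/p+1/q>1$ for the $q$-variation exponent of the perturbation $K^{n,\delta}+J^n$, which is of bounded variation, so $q=1$ and the condition is automatic for every $p<\infty$; Marcus-likeness of $\M^{n,\delta}$ is also clear from the construction of the Marcus lift.
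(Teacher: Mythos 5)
Your proposal is correct and follows essentially the same route as the paper: part (a) is a direct application of Theorem~\ref{thm:KP91} with $H^n=X^n$, and part (b) proceeds by decomposing $X^n$ via the UCV decomposition of $X^{n,\delta}$ plus the large-jump remainder $J^n=X^n-X^{n,\delta}$, invoking the enhanced BDG inequality (Theorem~\ref{thm:pvarBDG}) for the local martingale piece and the translation bound (Proposition~\ref{prop:translation}) to reassemble $\X^n$. The only noteworthy divergence is in how you control $\|J^n\|_{\onevar}$: the paper observes that $X\mapsto \sum_{|\Delta X_t|>\delta}|\Delta X_t|$ is continuous in Skorokhod topology and deduces tightness directly from tightness of $(X^n)$, whereas you argue through the $w'$-modulus characterisation (to bound the \emph{number} of jumps exceeding $\delta$) and the sup-norm bound $2\sup_t|X^n_t|$ (to bound their \emph{sizes}). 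Your variant is marginally more robust, since the paper's continuity claim really requires $\delta$ to be chosen so that the limit path $X$ has no jump of size exactly $\delta$, a subtlety your upper-bound argument sidesteps; in exchange, the paper's route is shorter. Both are fine.
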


\begin{proof}
Since the stochastic area is given by the It{\^o} integral~\eqref{eq:stochArea}, the convergence in law (resp. in probability) of $(\X^n)_{n \geq 1}$ to $\X$ is an immediate consequence of Theorem~\ref{thm:KP91}.

Let $\delta > 0$ for which we can apply the UCV condition to $(X^n)_{n \geq 1}$. We next claim that $(\|\X^{n,\delta}\|_{\pvar;[0,T]})_{n \geq 1}$ is tight. Indeed, for $\varepsilon > 0$ choose $\alpha > T$ so that $1/\alpha < \varepsilon/2$. Let $X^{n,\delta} = M^{n,\delta} + K^{n,\delta}$ be the decomposition from the UCV condition along with the stopping times $\tau^{n,\alpha}$.
Then there exists $C > 0$ such that for all $n \geq 1$
\[
\PP\big[\|K^{n,\delta}\|_{\onevar;[0,T]} > C\big]
\leq \PPP{\tau^{n,\alpha} \leq \alpha} + C^{-1}\E\big[\|K^{n,\delta}\|_{\onevar;[0,T\wedge \tau^{n,\delta}]}\big] < \varepsilon,
\]
and
\begin{align*}
\sup_{n \geq 1} \PP\big[\|\M^{n,\delta}\|_{\pvar;[0,T]} > C\big]
&\leq \sup_{n \geq 1} \PPP{\tau^{n,\alpha} \leq \alpha} + C^{-2}\E\big[\|\M^{n,\delta}\|^2_{\pvar;[0,T\wedge \tau^{n,\delta}]}\big]
\\
&\leq \sup_{n \geq 1} \PPP{\tau^{n,\alpha} \leq \alpha} + C^{-2}\EEE{|[M]_{T\wedge \tau^{n,\delta}}|} 
< \varepsilon,
\end{align*}
where in the final line we used the enhanced BDG inequality Theorem~\ref{thm:pvarBDG}. Using the fact that $\X^{n,\delta} = T_{K^{n,\delta}}(\M^{n,\delta})$ proves that $(\|\X^{n,\delta}\|_{\pvar;[0,T]})_{n \geq 1}$ is tight as claimed.

To conclude, observe that $\X^n = T_{L^n}(\X^{n,\delta})$ where $L^n := X^n - X^{n,\delta}$ is a process of bounded variation for which
\[
\norms{L^n}_{\onevar;[0,T]} \leq \sum_{|\Delta X^n_t| > \delta} |\Delta X^n_t|.
\]
Since $(X^n)_{n \geq 1}$ is tight and $\sum_{|\Delta X^n_t| > \delta} |\Delta X^n_t|$ is a continuous function of $X^n$ (for the Skorokhod topology), it follows that $(\norm{\X^n}_{\pvar;[0,T]})_{n \geq 1}$ is tight as required.
\end{proof}

\begin{corollary}\label{cor:convAlpha}
Follow the notation of Theorem~\ref{thm:UCV}. Let $p > 2$ and $\phi$ an endpoint continuous, $p$-approximating path function defined on $J \subset G^2(\R^d) \times G^2(\R^d)$ such that $\X, \X^n \in \DD_{\phi}([0,T],G^2(\R^d))$ a.s.. Then for every $p' > p$, $(\X^n,\phi) \rightarrow (\X,\phi)$ in law (resp. in probability) in the metric space $(\DD^{\pprimevar}_{\phi}([0,T],G^2(\R^d)), \alpha_{\pprimevar})$.
\end{corollary}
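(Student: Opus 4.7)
The plan is to deduce this directly by combining Theorem~\ref{thm:UCV} with Proposition~\ref{prop:SkorkhodAlphaMap} via a standard tightness/localization argument, handling the in-law case by a passage to Skorokhod's representation theorem.

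First, Theorem~\ref{thm:UCV} provides two facts simultaneously: $\X^n \to \X$ in law (resp.\ in probability) in $(D([0,T], G^2(\R^d)), \sigma_\infty)$, \emph{and} the family $(\norm{\X^n}_{\pvar;[0,T]})_{n \geq 1}$ is tight as a sequence of real random variables. Moreover, Corollary~\ref{cor:semimartPvar} ensures $\norm{\X}_{\pvar;[0,T]} < \infty$ a.s., so given $\varepsilon > 0$ I would choose $R = R(\varepsilon)$ so that $\sup_n \PPP{\norm{\X^n}_{\pvar} > R} < \varepsilon/2$ and $\PPP{\norm{\X}_{\pvar} > R} < \varepsilon/2$. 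Since $\phi$ is $p$-approximating, Lemma~\ref{lem:pvarJumps} upgrades this to a uniform bound $\norm{(\X^n)^\phi}_{\pvar}, \norm{\X^\phi}_{\pvar} \leq R'$ on the event $E_R^n := \{\norm{\X}_{\pvar} \leq R,\ \norm{\X^n}_{\pvar} \leq R\}$, with $\PPP{(E_R^n)^c} < \varepsilon$ uniformly in $n$.

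For the case of convergence in probability, Proposition~\ref{prop:SkorkhodAlphaMap} (applied with the exponents $p < p'$) yields a modulus $\omega_{R'}$, with $\omega_{R'}(r) \to 0$ as $r \to 0$, such that on $E_R^n$
\[
\alpha_{\pprimevar}(\X^n, \X) \leq \omega_{R'}\bigl(\sigma_\infty(\X^n, \X)\bigr).
\]
Then for any $\eta > 0$, picking $r$ with $\omega_{R'}(r) < \eta$,
\[
\PPP{\alpha_{\pprimevar}(\X^n, \X) > \eta} \leq \PPP{(E_R^n)^c} + \PPP{\sigma_\infty(\X^n, \X) > r} \leq \varepsilon + o(1),
\]
where the second term vanishes as $n \to \infty$ by the assumed convergence in probability in $\sigma_\infty$. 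Letting $\varepsilon \to 0$ completes this case.

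For the case of convergence in law, I would invoke the Skorokhod representation theorem: since $(D([0,T], G^2(\R^d)), \sigma_\infty)$ is Polish, there exist $\tilde\X^n, \tilde\X$ on a common probability space with $\tilde\X^n \stackrel{d}{=} \X^n$, $\tilde\X \stackrel{d}{=} \X$, and $\sigma_\infty(\tilde\X^n, \tilde\X) \to 0$ almost surely. Tightness of $(\norm{\tilde\X^n}_{\pvar})$ is preserved since the laws are unchanged, so the preceding paragraph applies verbatim to show $\alpha_{\pprimevar}(\tilde\X^n, \tilde\X) \to 0$ in probability. Testing against bounded $\alpha_{\pprimevar}$-continuous functionals and passing back to the original laws then gives convergence in law in $(\DD^{\pprimevar}_\phi, \alpha_{\pprimevar})$. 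The only potentially delicate points in the whole argument are measurability of the $p$-variation functionals (immediate from lower semicontinuity under $\sigma_\infty$) and ensuring that the $p$-approximating hypothesis is used to translate control on $\norm{\X^n}_{\pvar}$ into control on $\norm{(\X^n)^\phi}_{\pvar}$ — both of which are handled cleanly by Lemma~\ref{lem:pvarJumps}.
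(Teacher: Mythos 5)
Your proposal is correct and follows essentially the same route as the paper: tightness of $(\norm{\X^n}_{\pvar})_{n\geq 1}$ from Theorem~\ref{thm:UCV} combined with the uniform continuity of the identity map on sets of bounded $p$-variation (Proposition~\ref{prop:SkorkhodAlphaMap}, which already packages the Lemma~\ref{lem:pvarJumps} step you spell out) for convergence in probability, and the Skorokhod representation theorem on $D([0,T],G^2(\R^d))$ to reduce the in-law case to the in-probability case. The only difference is that you make explicit the quantitative modulus and the transfer back to the original laws, which the paper leaves implicit.
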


\begin{remark}\label{remark:noArea}
As we shall see in Proposition~\ref{prop:WongZakai}, a simple way to apply Corollary~\ref{cor:convAlpha} is to assume that $\phi$ comes from the lift of a (left-invariant) path function $\phi : \R^d \to C^{\qvar}([0,T],\R^d)$ (denoted by the same symbol) which is endpoint continuous and $q$-approximating for some $1 \leq q < 2$ (so that a canonical lift indeed exists), and does not create area, i.e.,
\begin{equation}\label{eq:noArea}
\log S_2(\phi(x))_{0,1} = x \in \R^d \oplus \{0\} \subset \g^2(\R^d).
\end{equation}
Since $\X$ is Marcus-like, i.e., $\log(\Delta \X_t) \in \R^d \oplus \{0\}$, it indeed follows that $\X \in \DD_{\phi}([0,T],G^2(\R^d))$ so that $\X^\phi$ is well defined (which corresponds to interpolating the jumps of $\X$ using $\phi$); the same of courses applies to $\X^n$.
\end{remark}

\begin{proof}[Proof of Corollary~\ref{cor:convAlpha}]
Consider first the case of convergence in probability. By Theorem~\ref{thm:UCV}, $(\norm{\X^n}_{\pvar})_{n \geq 1}$ is tight, so for every $\varepsilon > 0$ we can find $R > 0$ such that
\[
\sup_{n \geq 1} \PP\big[\max\{ \norm{\X^n}, \norm{\X}, \norm{\X^n}_{\pvar}, \norm{\X}_{\pvar} \} > R\big] \leq \varepsilon.
\]
The conclusion now follows from Proposition~\ref{prop:SkorkhodAlphaMap}.
For the case of convergence in law, the proof follows in a similar way upon applying the Skorokhod representation theorem~\cite[Thm.~3.30]{Kallenberg97} to the space $D([0,T],G^N(\R^d))$ and the sequence $(\X^n)_{n \geq 1}$.
\end{proof}

As an application of Corollary~\ref{cor:convAlpha}, along with the fact that piecewise constant approximations satisfy UCV~\cite[Ex.~3.7]{KP91}, we obtain the following Wong--Zakai-type result
(which shall be substantially generalized in Section~\ref{subsec:WongZakai} using different methods).

\begin{remark}
The following result resembles the Wong--Zakai theorem of~\cite{KPP95}, where it was shown that ODEs driven by approximations of the form $X^h_t = h^{-1} \int_{t-h}^t X_s ds$ converge to an SDE of the Marcus type.
Here we are able to complement~\cite{KPP95} by showing this for the case of genuine piecewise linear (and a variety of other) approximations.
Moreover the deterministic nature of our rough path approach is able to handle anticipating initial data (see Remark~\ref{rem:anticipating}).
\end{remark}

\begin{proposition}[Wong--Zakai with no area]\label{prop:WongZakai}
Let $X$ be a semimartingale with Marcus lift $\X$, and let $\D_n \subset [0,T]$ be a sequence of deterministic partitions such that $|\D_n| \rightarrow 0$. Let $X^{[D_n]}$ be the piecewise constant approximations of $X$ along the partition $\D_n$, and let $\X^{[\D_n]}$ be their (Marcus) lifts.

Let $\phi : \R^d \to C^{\qvar}([0,1], \R^d)$ be an endpoint continuous, $q$-approximating path function for some $1 \leq q < 2$ such that $\phi$ does not create area, i.e.,~\eqref{eq:noArea} holds.
By an abuse of notation, let $\phi : \exp(\R^d \oplus \{0\}) \to C^{\qvar}([0,1],G^2(\R^d))$ denote also the canonical lift of $\phi$, treated as a path function defined on $\exp(\R^d \oplus \{0\}) \subset G^2(\R^d)$.

\begin{enumerate}[label=\upshape(\arabic*\upshape)]
\item \label{point:WZ1} Consider the admissible pairs $(\X^{[\D_n]},\phi)$ and $(\X,\phi)$ in $\DD([0,T],G^2(\R^d))$. Then for every $p > 2$, $\alpha_{\pvar}(\X^{[\D_n]}, \X) \rightarrow 0$ in probability as $n \rightarrow \infty$.

\item \label{point:WZ2} Let $X^{\D_n,\phi}$ be the piecewise-$\phi$ interpolation of $X$ along the partition $\D_n$. Let $U^\x_{T \leftarrow 0} \in \Diff^m(\R^e)$ denote the flow associated to the RDE~\eqref{eq:cadlagRDE}. Then $U^{X^{\D_n,\phi}}_{T \leftarrow 0}$ converges in probability to $U^{\X^\phi}_{T \leftarrow 0}$ as $n \rightarrow \infty$ (as $\Diff^m(\R^e)$-valued random variables).
\end{enumerate}
\end{proposition}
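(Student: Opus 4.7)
The plan is to deduce both statements from Corollary~\ref{cor:convAlpha} combined with the flow-continuity statement in Theorem~\ref{thm:alphaMetric}.

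I would first verify the hypotheses of Corollary~\ref{cor:convAlpha} for the sequence $(X^{[\D_n]})_{n \geq 1}$. Since $|\D_n| \to 0$, one has $X^{[\D_n]} \to X$ uniformly on $[0,T]$ (hence in Skorokhod $J_1$), and by~\cite[Ex.~3.7]{KP91} deterministic piecewise-constant time-discretizations of a semimartingale satisfy UCV. Both $\X$ and $\X^{[\D_n]}$ are Marcus-like as canonical lifts of semimartingales, so their jumps lie in the admissibility domain $\exp(\R^d \oplus \{0\}) \subset G^2(\R^d)$ of the lifted $\phi$, and consequently $(\X, \phi)$ and $(\X^{[\D_n]}, \phi)$ are admissible. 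The lifted $\phi$ inherits endpoint continuity from the $\R^d$-level assumption via continuity of the canonical Young lift $h \mapsto S_2(h)$, and is $q$-approximating (hence $p$-approximating for every $p \geq q$) by the same Young estimate combined with the no-area condition~\eqref{eq:noArea}. Applying Corollary~\ref{cor:convAlpha} with some $p_0 \in (2, p)$ then yields $\alpha_{\pvar}(\X^{[\D_n]}, \X) \to 0$ in probability for every $p > 2$, giving part~\ref{point:WZ1}.

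For part~\ref{point:WZ2} I would next identify the continuous rough path $(\X^{[\D_n]})^\phi$ with the canonical Young lift $S_2(X^{\D_n, \phi})$ of the piecewise-$\phi$ interpolation. On each subinterval $[t_{i-1}, t_i]$ of $\D_n$ the path $\X^{[\D_n]}$ is constant and carries a single Marcus-like jump of size $X_{t_i} - X_{t_{i-1}}$; the construction of $\x^\phi$ replaces this jump by the lifted arc $\phi(X_{t_i} - X_{t_{i-1}})$ at the group level, and condition~\eqref{eq:noArea} says that this arc is precisely the canonical Young lift of the $\R^d$-valued interpolation itself. Up to reparametrization the two continuous $G^2(\R^d)$-valued paths thus coincide, and Definition~\ref{def:canonicalRDE} then yields $U^{X^{\D_n, \phi}}_{T \leftarrow 0} = U^{(\X^{[\D_n]})^\phi}_{T \leftarrow 0}$.

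With this identification in hand, part~\ref{point:WZ2} follows from Theorem~\ref{thm:alphaMetric}: fix $p \in (2, \gamma)$; tightness of $(\norm{\X^{[\D_n]}}_{\pvar})_{n \geq 1}$ from Theorem~\ref{thm:UCV} together with part~\ref{point:WZ1} confines the drivers to a bounded $p$-variation set, and uniform continuity of $\x \mapsto U^{\x}_{T \leftarrow 0}$ on such sets delivers the desired convergence of flows in $\Diff^m(\R^e)$ in probability (via a standard subsequence/Skorokhod-representation argument upgrading convergence in probability to a.s.\ convergence along subsequences). The main obstacle I expect is the identification step itself: the symbol $\phi$ is overloaded (as an $\R^d$-valued interpolation and as its canonical $G^2(\R^d)$-valued lift), so one has to check carefully that $(\X^{[\D_n]})^\phi$ really reproduces the canonical Young lift of $X^{\D_n, \phi}$ rather than some Marcus-like competitor carrying extra area --- this is exactly where hypothesis~\eqref{eq:noArea} is essential, and once it is in place the remaining ingredients are packaged by Corollary~\ref{cor:convAlpha} and Theorem~\ref{thm:alphaMetric}.
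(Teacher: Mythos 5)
Your argument reproduces the paper's proof: part~(1) is Corollary~\ref{cor:convAlpha} plus the fact that piecewise-constant time-discretizations satisfy UCV, and part~(2) rests on the identification of $(\X^{[\D_n]})^\phi$ with the canonical lift of $X^{\D_n,\phi}$ (valid precisely because $\phi$ creates no area), followed by the flow-continuity clause of Theorem~\ref{thm:alphaMetric} on bounded $p$-variation sets. One factual slip worth correcting: you assert that $X^{[\D_n]} \to X$ \emph{uniformly} on $[0,T]$ and then deduce Skorokhod convergence. For a genuinely discontinuous semimartingale this is false --- if $X$ has a jump at a non-partition time $s$, then for $t$ slightly less than the first partition point $\geq s$ one has $|X^{[\D_n]}_t - X_t|$ of order the jump size, so the supremum distance does not vanish. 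What does hold is convergence in the Skorokhod $J_1$ topology (matching the jump times by a small time-change), and that is exactly what Corollary~\ref{cor:convAlpha} requires, so the remainder of your argument goes through unchanged.
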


\begin{proof}
\ref{point:WZ1} follows immediately from Corollary~\ref{cor:convAlpha} and the fact that $(X^{[\D_n]})_{n \geq 1}$ satisfies UCV~\cite[Ex.~3.7]{KP91}.

For~\ref{point:WZ2}, note that, since $\phi$ does not create area, $(\X^{[\D_n]})^{\phi}$ coincides (up to reparametrization) with the canonical lift of $X^{\D_n,\phi}$. The conclusion now follows from~\ref{point:WZ1} and Theorem~\ref{thm:alphaMetric}.
\end{proof}

We now record a relation between canonical RDEs and Marcus SDEs.

\begin{proposition}\label{prop:Marcus}
Let $X : [0,T] \to \R^d$ be a semimartingale and $\X$ its Marcus lift. Then for vector fields $V = (V_1,\ldots, V_d)$ in $\Lip^\gamma(\R^e)$ for some $\gamma > 2$, it holds that the canonical RDE
\begin{equation}\label{eq:cadlagRDE2}
dY_t = V(Y_t) \diamond d\X_t, \; \; Y_0 \in \R^e
\end{equation}
(i.e., the path function $\phi$ is the taken to be log-linear) coincides a.s. with the Marcus SDE
\[
dY_t = V(Y_t)\diamond dX_t, \; \; Y_0 \in \R^e.
\]
\end{proposition}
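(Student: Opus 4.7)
The plan is to approximate $X$ by piecewise linear interpolations $X^{\D_n}$ along deterministic partitions $\D_n$ with $|\D_n|\to 0$, and identify both the Marcus SDE solution $Y^M$ and the canonical RDE solution $Y^{RDE}$ as the common limit of the classical ODE solutions driven by $X^{\D_n}$.

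Each $X^{\D_n}$ is absolutely continuous, so both its Marcus SDE and its canonical RDE (the latter with the log-linear path function, as in~\eqref{eq:cadlagRDE2}) collapse to the classical ODE $\dot Y^{(n)}_t = V(Y^{(n)}_t)\dot X^{\D_n}_t$, giving a single pathwise solution $Y^{(n)}$ starting at $Y_0$. On the RDE side, the linear path function on $\R^d$ is endpoint continuous, $1$-approximating, and does not create area; its canonical lift to $G^2(\R^d)$ is precisely the log-linear path function. Proposition~\ref{prop:WongZakai}\ref{point:WZ2} thus applies, and the flows $U^{X^{\D_n}}_{T\leftarrow 0}$ converge in probability in $\Diff^m(\R^e)$ to $U^{\X}_{T\leftarrow 0}$, the canonical RDE flow of~\eqref{eq:cadlagRDE2}. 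In particular, $Y^{(n)}_T \to Y^{RDE}_T$ in probability.

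On the SDE side, piecewise linear interpolations of a semimartingale along deterministic partitions are well known to satisfy UCV (cf.~\cite[Ex.~3.7]{KP91}), so the Kurtz--Protter--Pardoux Wong--Zakai theorem~\cite{KPP95} yields $Y^{(n)}\to Y^M$ in probability in the Skorokhod topology. Since $Y^M$ has at most countably many fixed jump times, for every $t$ outside this countable set we have $Y^{(n)}_t \to Y^M_t$ in probability; combining with the previous paragraph, applied with $t$ in place of $T$, gives $Y^M_t = Y^{RDE}_t$ a.s. Running this on a countable dense set of such $t$ and invoking right-continuity of both processes upgrades the equality to indistinguishability $Y^M = Y^{RDE}$ on $[0,T]$.

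The main delicacy is matching the two modes of convergence (Skorokhod-in-probability on the SDE side against $\Diff^m$-in-probability at the terminal time on the RDE side) and handling potential fixed-time jumps of $Y^M$; both are resolved by restricting attention to a countable dense set of $t$'s avoiding fixed jumps of $X$, which is a standard measure-theoretic step once the two limiting statements are in hand.
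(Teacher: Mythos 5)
Your approach is a genuinely different route from the paper's. The paper proves this directly: it writes out both the Marcus SDE and the canonical RDE (via the equivalent ``rough canonical equation'' integral formulation from~\cite[Def.~37, Thm.~38]{FrizShekhar17}) as explicit integral equations and matches terms; the nontrivial step is identifying $\int_0^t V(Y_{s-})\,d\X_s$ with the corresponding It{\^o} plus bracket plus jump-correction expression, which is handled by a Riemann-sum argument and Lemma~\ref{lem:areaConv}. Your proposal instead identifies both solutions as limits of ODEs along interpolating drivers. The idea is sound in spirit, but as written it has two gaps.

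The main gap is on the SDE side. You cite~\cite[Ex.~3.7]{KP91} for UCV of piecewise \emph{linear} interpolations, but that example concerns piecewise \emph{constant}, adapted approximations; a piecewise linear interpolation of $X$ depends on the future value $X_{t_{i+1}}$ on $[t_i, t_{i+1}]$ and is therefore not adapted to the filtration of $X$, which already obstructs applying any UCV-based stability theorem directly. Moreover, the paper explicitly notes (remark preceding Proposition~\ref{prop:WongZakai}) that~\cite{KPP95} treats moving-average approximations $X^h_t = h^{-1}\int_{t-h}^t X_s\,ds$, and presents piecewise linear as a new case. A cleaner variant uses piecewise \emph{constant} approximations $X^{[\D_n]}$, which are adapted semimartingales; for step drivers the Marcus SDE and the canonical RDE with log-linear path function coincide by inspection (both simply apply the ODE flow along each jump). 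But even then you still need a stability theorem for Marcus SDEs under UCV and Skorokhod convergence. The paper's version of that statement is Theorem~\ref{thm:MarcusSDEs}, whose proof \emph{uses} Proposition~\ref{prop:Marcus}; so unless you can point to such a theorem already present in~\cite{KPP95} in the exact form you need, your argument risks being circular relative to the paper's development. This citation has to be nailed down.

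A second, more localized misstep is ``applying the previous paragraph with $t$ in place of $T$.'' Proposition~\ref{prop:WongZakai}\ref{point:WZ2} gives flow convergence only at the terminal time, and the metric $\alpha_{\pvar}$ does not behave well under restriction to $[0,t]$; the paper warns against exactly this in Remark~\ref{remark:alphaRest} and in the remark following Theorem~\ref{thm:alphaMetric}. The fix is to use Proposition~\ref{prop:WongZakai}\ref{point:WZ1} together with the pointwise clause of Theorem~\ref{thm:alphaMetric} (``$\lim_{n\to\infty}y^n_t = y_t$ for all continuity points $t$ of $\X$''), which directly gives $Y^{(n)}_t\to Y^{RDE}_t$ at all continuity points without any subinterval argument.
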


\begin{proof}
Recall that, by definition, the Marcus SDE satisfies~\cite{KPP95}
\begin{align*}
Y_t = Y_0 + \int_0^t V(Y_{s-})dX_s + \frac{1}{2}\int_0^t V' V(Y_s)d[X]^c_s
+ \sum_{0 < s \leq t}\left\{ e^{V\Delta X_s} (Y_{s-}) - Y_{s-} - V(Y_{s-})\Delta X_s \right\},
\end{align*}
where $e^{W}(y)$ denotes the flow at time $1$ along the vector field $W$ from $y$, i.e., the solution at $t=1$ to $z_0 = y$, $\dot z_t = W(z_t)$.

Recall likewise that, by definition, the rough canonical equation (in the sense of~\cite[Def.~37]{FrizShekhar17}) satisfies
\begin{align*}
Y_t = Y_0 + \int_0^t V(Y_{s-})d\X_s + \sum_{0 < s \leq t}\left\{ e^{V\Delta X_s} (Y_{s-}) - Y_{s-} - V(Y_{s-})\Delta X_s - V' V(Y_{s-})\frac{1}{2}(\Delta X_s)^{\otimes 2}\right\},
\end{align*}
which agrees with the solution to the canonical RDE~\eqref{eq:cadlagRDE2} from Definition~\ref{def:canonicalRDE} (see~\cite[Thm.~38]{FrizShekhar17}; we point out that~\cite{KPP95} and~\cite{FrizShekhar17} are not restricted to the case of finite activity).
It remains to verify that a.s.
\[
\int_0^t V(Y_{s-})d\X_s =\int_{0}^t V(Y_{s-})dX_s + \frac{1}{2}V'V(Y_s)d[X]^c_s  + \sum_{0<s\leq t} V'V(Y_{s-})\frac{1}{2}(\Delta X_s)^{\otimes 2}.
\]
To this end, observe that
\[
\int_{0}^t V(Y_{s-}) d\X_s = \lim_{|\D| \rightarrow 0} \sum_{t_i \in \D} V(Y_{t_i-})X_{t_i,t_{i+1}} + V'V(Y_{t_i-})\X^{(2)}_{t_i,t_{i+1}}\;,
\]
where $\X^{i,j}_{s,t} = \int_s^t X^i_{s,u-}dX^j_u +\frac{1}{2}[X^i,X^j]^c_{s,t}+\frac{1}{2}\sum_{r \in (s,t]}\Delta X^i_r\Delta X^j_r$.
It follows from Lemma~\ref{lem:areaConv} that for a (deterministic) sequence of partitions with $|\D_n| \to 0$, we have a.s.
\[
\int_0^t V(Y_{s-})d\X_s = \lim_{n\to \infty} \sum_{t_i \in \D_n}V(Y_{t_i-})dX_{t_i,t_{i+1}} +\frac{1}{2}V'V(Y_{t_i-})[X]_{t_{i},t_{i+1}} + \sum_{r\in (t_i,t_{i+1}]}V'V(Y_{t_i-})\frac{1}{2}(\Delta X_r)^{\otimes 2}
\]
from which the conclusion readily follows.
\end{proof}

\begin{remark}[Marcus SDEs with piecewise constant driver] \label{remark:MDEconst}
Observe that a simple special case of Proposition~\ref{prop:Marcus} (which does not require any probabilistic considerations) is a piecewise constant path $X^{[\D]}$, which is constant between the points of a partition $\D \subset [0,T]$. In this case, the
solution to 
\[
dY =V(Y) \diamond dX^{[\D]}, \; \; Y_0 \in \R^e,
\]
agrees, for all $t \in \D$, with the ODE solution 
\[
d\tilde Y =V(\tilde Y) dX^{\D}, \; \; Y_0 \in \R^e,
\]
where $X^{\D}$ is the piecewise linear path obtained from $X^{[\D]}$ by connecting with a straight line consecutive points $X^{[\D]}_{t_n} \rightsquigarrow X^{[\D]}_{t_{n+1}}$ for all $t_n \in \D$.  
\end{remark}

We are now ready to state the precise criterion for convergence in law (resp. in probability) of Marcus SDEs which was advertised in the introduction and which is analogous to the same criterion for It{\^o} SDEs~\cite[Thm.~5.4]{KP91}.

\begin{theorem}\label{thm:MarcusSDEs}
Let $V = (V_1,\ldots, V_d)$ be a collection of $\Lip^\gamma$ vector fields on $\R^e$ for some $\gamma > 2$. Let $Y_0, (Y_0^n)_{n \geq 1}$ be a collection of (random) initial conditions in $\R^e$ and $X, (X^n)_{n \geq 1}$ be a collection of semimartingales such that $(X^n)_{n \geq 1}$ satisfies UCV and $(Y^n_0, X^n) \rightarrow (Y_0, X)$ in law (resp. in probability) as $n \rightarrow \infty$ (as $\R^e \times D([0,T], \R^d)$-valued random variables). Then the solutions to the Marcus SDEs
\[
dY^n_t = V(Y^n_t)\diamond dX^n_t, \; \; Y^n_0 \in \R^e,
\]
converge in law (resp. in probability) as $n \rightarrow \infty$ (in the Skorokhod topology) to the solution of the Marcus SDE
\begin{equation} \label{eq:MSDE} 
dY_t = V(Y_t)\diamond dX_t, \; \; Y_0 \in \R^e.
\end{equation}
\end{theorem}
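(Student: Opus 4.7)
The plan is to lift to rough paths and reduce to the continuity statement for canonical RDEs. By Proposition~\ref{prop:Marcus}, the Marcus SDE~\eqref{eq:MSDE} is a.s.\ equivalent to the canonical RDE $dY_t = V(Y_t) \diamond d\X_t$ driven by the Marcus lift $\X$ with the log-linear path function $\phi$, and similarly for each $Y^n$. It thus suffices to prove the corresponding convergence of canonical RDE solutions.

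The first step would be to apply Theorem~\ref{thm:UCV}, which provides the Skorokhod convergence $\X^n \to \X$ in $D([0,T], G^2(\R^d))$ together with tightness of $(\norm{\X^n}_{\pvar})_{n \geq 1}$ for every $p > 2$. The log-linear $\phi$ is endpoint continuous and $p$-approximating (cf.\ the example preceding Lemma~\ref{lem:alphSigma}), and each $\X^n$ is Marcus-like, so Corollary~\ref{cor:convAlpha} upgrades this to $(\X^n, \phi) \to (\X, \phi)$ in $\alpha_{\pprimevar}$ for any $p' > p > 2$. Combined with the assumed joint convergence of the initial conditions, this yields $(Y_0^n, (\X^n, \phi)) \to (Y_0, (\X, \phi))$ in $\R^e \times (\DD^{\pprimevar}, \alpha_{\pprimevar})$ in law (resp.\ in probability).

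Choosing $p < p' < \gamma$, the local Lipschitz continuity from Theorem~\ref{thm:alphaMetric} then transfers this to convergence of the canonical RDE solutions. To upgrade the resulting $\alpha_{\pprimevar}$-convergence (which only guarantees pointwise convergence at continuity points of $\X$) to Skorokhod convergence on $D([0,T], \R^e)$, the next step would invoke Proposition~\ref{prop:alphaSkorokhod}\ref{point:contSkor1}, whose Skorokhod-convergence hypothesis on the driver is precisely what Theorem~\ref{thm:UCV} supplies. In the in-probability case this is direct. In the in-law case, one would apply the Skorokhod representation theorem on the Polish space $\R^e \times D([0,T], G^2(\R^d))$ to realise the joint convergence of $(Y_0^n, \X^n)$ almost surely; tightness of the $p$-variation norms combined with Proposition~\ref{prop:SkorkhodAlphaMap} then yields a.s.\ $\alpha_{\pprimevar}$-convergence of the drivers, to which the deterministic continuity argument applies, and one finally projects back to convergence in law of $Y^n$.

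The hard part will be the interplay between $\alpha_{\pprimevar}$ (which is not well-behaved under restriction to sub-intervals, cf.\ Remark~\ref{remark:alphaRest}) and the Skorokhod topology: Theorem~\ref{thm:alphaMetric} only yields pointwise convergence at continuity points of the limiting driver, so the full Skorokhod-topology conclusion for $Y^n$ genuinely requires Proposition~\ref{prop:alphaSkorokhod}\ref{point:contSkor1}, resting on tightness of the $p$-variation moduli of continuity of the lifts. A secondary technicality is that $(\DD^{\pprimevar}, \alpha_{\pprimevar})$ is not obviously Polish, which is why the Skorokhod representation in the in-law case must be applied in the ambient Polish space of $G^2(\R^d)$-valued c\`adl\`ag paths and then transferred back via the tightness argument already used in the proof of Corollary~\ref{cor:convAlpha}.
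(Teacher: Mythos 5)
Your proposal is correct and follows essentially the same route as the paper: reduce via Proposition~\ref{prop:Marcus} to the canonical RDE with the log-linear path function, use Theorem~\ref{thm:UCV}/Corollary~\ref{cor:convAlpha} for convergence of the Marcus lifts together with tightness of the $p$-variation norms, and conclude by the deterministic continuity of the solution map. The only difference is cosmetic: the paper cites part~\ref{point:contSkor2} of Proposition~\ref{prop:alphaSkorokhod} directly, whereas you unroll its proof (Theorem~\ref{thm:alphaMetric} plus part~\ref{point:contSkor1} plus Proposition~\ref{prop:SkorkhodAlphaMap}, with the Skorokhod representation theorem in the ambient Polish space for the in-law case, exactly as in the proof of Corollary~\ref{cor:convAlpha}).
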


\begin{proof}
This is an immediate consequence of Corollary~\ref{cor:convAlpha}, Proposition~\ref{prop:Marcus}, and the deterministic continuity of the solution map (part~\ref{point:contSkor2} of Proposition~\ref{prop:alphaSkorokhod}).
\end{proof}

\begin{remark}\label{rem:anticipating}
We have not been explicit about filtrations, but of course, every semimartingale $X^n$ above is adapted to some filtration $\{ \FF^n_t \}_{t \geq 0}$. In the same vain, as is standard in the context of SDEs, the initial datum $Y_0^n$ is assumed to be $\FF^n_0$-measurable, so that~\eqref{eq:MSDE} makes sense as a bona fide integral equation (as recalled in the proof of Proposition \ref{prop:Marcus}).

Situations where $Y^0_n$ is independent of the driving noise $X^n$ are then immediately handled. If, on the other hand, $Y^0_n$ depends in some anticipating fashion on the driving noise, then classical SDE theory (Marcus or It{\^o}) breaks down and ideas from anticipating stochastic calculus are necessary (such as composing the stochastic flow with anticipating initial data; in the Marcus context this would be possible thanks to~\cite[Thm.~3.4]{KPP95}). Our 
(essentially deterministic) rough path approach bypasses such problems entirely. We shall not pursue further application of rough paths to ``anticipating Marcus SDEs'' here, but note that this could be done analogously to~\cite{CFV07}.
\end{remark}

\subsection{Examples}

We now give a list of examples to which Corollary~\ref{cor:convAlpha}, Proposition~\ref{prop:WongZakai}, and Theorem~\ref{thm:MarcusSDEs} apply. The main criterion of application is of course the UCV condition. For further examples of sequences of semimartingales satisfying UCV, see~\cite[Sec.~3]{KP91}.
We note that in the framework of Theorem~\ref{thm:MarcusSDEs}, the UCV condition cannot in general be ommited (but see Theorem~\ref{thm:WongZakai} below) as seen, e.g., in homogenization theory~\cite{LL03, FGL15, KM16} and non-standard approximations to Brownian motion~\cite{McShane72, FrizOberhauser09}.

\begin{example}[Piecewise constant approximations]\label{ex:pieceConst}
Let $X$ be a c{\`a}dl{\`a}g semimartingale and $X^{[\D_k]}$ be its piecewise constant approximation (see Figure~\ref{fig:piecewiseConst}) along a sequence of deterministic partitions $\D_k \subset [0,T]$ such that $|\D_k| \rightarrow 0$. Then by Theorem~\ref{thm:MarcusSDEs}, the solutions to
\[
dY^{[\D_k]}_t = V(Y^{[\D_k]}_t) \diamond dX^{[\D_k]}_t, \; \; Y^{[\D_k]}_0 = y_0 \in \R^e
\]
converge in probability (for the Skorokhod topology) to the solution of
\[
dY_t = V(Y_t) \diamond dX_t, \; \; Y_0 = y_0 \in \R^e.
\]
Moreover, if $X$ is continuous, then $Y^{[\D_k]}$ converges in probability for the uniform topology on $[0,T]$ to $Y$ (which is now also the solution to the Stratonovich SDE).
\end{example}

\begin{figure}
\centering
\begin{minipage}{0.45\textwidth}
\caption{C\`adl\`ag path (blue) with piecewise constant approximtation (red)} \label{fig:piecewiseConst}
\smallskip
\begin{tikzpicture}[scale=0.75]
\begin{axis}
[ xtick={0,10}, ytick={0},  xticklabels={,,},  yticklabels={,,},  xlabel={$t$}, ylabel={$x$}, xlabel style={below right}, ylabel style={above left},
  xmin= -1, xmax=11,  ymin=-3,  ymax=10]

\addplot[domain=0:3,blue, thick] {7-(x-3)^2};   
\addplot[domain=3:6,blue, thick, samples = 200] {5 + sin(  (x-3)/(6-3) * 3 *360 ) }; 
\addplot[domain=6:10,blue, thick, samples= 200] {8-(x-6)/2 - sin(  (x-6)/(10-6) * 1 *360 ) / 3 };

\addplot[domain=0:2, jump mark left, thick, red, samples=2] {7-(x-3)^2};
\addplot[domain=2:4, jump mark left, thick, red, samples=2] {7-(2-3)^2 + (x-2)*(-1/3)};
\addplot[domain=4:6, jump mark left, red, thick, samples=2] {5};
\addplot[domain=6:10, jump mark left, red, thick, samples=3] {8-(x-6)/2};

\addplot[soldot] coordinates{(3,5)(6,8)};   
\addplot[holdot] coordinates{(3,7)(6,5)};  

\addplot[soldotred] coordinates{(0,-2)(2,6)(4,5)(6,8)(8,7)(10,6)};
\addplot[holdotred] coordinates{(2,-2)(4,6)(6,5)(8,8)(10,7)};   

\end{axis}
\end{tikzpicture}
\end{minipage}
\hfill
\begin{minipage}{0.45\textwidth}
\centering
\captionsetup{width=0.9\textwidth}
\centering
\caption{C\`adl\`ag path (blue) with piecewise linear approximation (red, dashed)} \label{fig:piecewiseLinear}
\smallskip
\begin{tikzpicture}[scale=0.75]
\begin{axis}
[ xtick={0,10}, ytick={0},  xticklabels={,,},  yticklabels={,,},  xlabel={$t$}, ylabel={$x$}, xlabel style={below right}, ylabel style={above left},
  xmin= -1, xmax=11,  ymin=-3,  ymax=10]

\addplot[domain=0:3,blue, thick] {7-(x-3)^2};   
\addplot[domain=3:6,blue, thick, samples = 200] {5 + sin(  (x-3)/(6-3) * 3 *360 ) }; 
\addplot[domain=6:10,blue, thick, samples= 200] {8-(x-6)/2 - sin(  (x-6)/(10-6) * 1 *360 ) / 3 };

\addplot[domain=0:2, red, thick, densely dashed, samples=2] {7-(x-3)^2};
\addplot[domain=2:4, red, thick, densely dashed, samples=2] {7-(2-3)^2 + (x-2)*(-1/2)};
\addplot[domain=4:6, red, thick, densely dashed, samples=2] {5+3*(x-4)/2};
\addplot[domain=6:8, red, thick, densely dashed, samples=2] {8-(x-6)/2)};
\addplot[domain=8:10, red, thick, densely dashed, samples=2] {8-(x-6)/2};

\addplot[soldot] coordinates{(0,-2)(3,5)(6,8)};
\addplot[holdot] coordinates{(3,7)(6,5)(10,6)};

\addplot[soldotred] coordinates{(0,-2)(2,6)(4,5)(6,8)(8,7)(10,6)};

\end{axis}
\end{tikzpicture}
\end{minipage}
\end{figure}

\begin{example}[Piecewise linear approximations]\label{ex:pieceLinear}
Let $X$ be a c{\`a}dl{\`a}g semimartingale and now let $X^{\D_k}$ be its piecewise linear (i.e., Wong--Zakai) approximation (see Figure~\ref{fig:piecewiseLinear}) along a sequence of deterministic partitions $\D_k \subset [0,T]$ such that $|\D_k| \rightarrow 0$. Consider the solutions to random ODEs
\[
dY^{\D_k}_t = V(Y^{\D_k}_t) dX^{\D_k}_t, \; \; Y^{\D_k}_0 = y_0 \in \R^e,
\]
and the Marcus SDE
\[
dY_t = V(Y_t) \diamond dX_t, \; \; Y_0 = y_0 \in \R^e.
\]
Then, by Proposition~\ref{prop:WongZakai}, it holds that $Y^{\D_k}_T \rightarrow Y_T$ in probability.

Moreover, if $X$ is continuous, then in light of the last part of Example~\ref{ex:pieceConst} and Remark~\ref{remark:MDEconst}, $Y^{\D_k}$ converges in probability for the uniform topology on $[0,T]$ to $Y$ (which, we emphasize again, is now the solution to the Stratonovich SDE), which agrees with the classical Wong--Zakai theorem for continuous semimartingales.
\end{example}

\begin{example}[Donsker approximations to Brownian motion]\label{ex:DonskerBrown}
Consider an $\R^d$-valued random walk $X^{n}$ with iid increments and finite second moments, rescaled so that $X^{n} \rightarrow B$ in law. Here we treat $X^n$ as either piecewise constant or interpolated using any sufficiently nice path function $\phi$ which does not create area, i.e., satisfies~\eqref{eq:noArea} (e.g., piecewise linear). Then $(X^n)_{n \geq 1}$ satisfies UCV, so by Corollary~\ref{cor:convAlpha} we again have convergence of the Marcus SDEs (or random ODEs in case of continuous interpolations)
\[
dY^n_t = V(Y^n_t) \diamond dX^n_t \; \; Y_0 = y_0 \in \R^e
\]
in law for the uniform topology on $[0,T]$ to the Stratonovich limit
\[
dY_t = V(Y_t)\circ dB_t \; \; Y_0 = y_0 \in \R^e.
\]
This is a special case of~\cite[Ex.~5.12]{Chevyrev15} (see also Example~\ref{ex:DonskerLevy}) which improves the main result of Breuillard et al.~\cite{Breuillard09} in the sense that no additional moment assumptions are required (highlighting a benefit of $p$-variation vs. H{\"o}lder topology).
\end{example}

\begin{example}[Null array approximations to L{\'e}vy processes]\label{ex:DonskerLevy}
Generalizing Example~\ref{ex:DonskerBrown}, consider a null array of $\R^d$-valued random variables $X_{n1},\ldots, X_{nn}$, i.e., $\lim_{n \rightarrow \infty} \sup_k \EEE{|X_{nk}|\wedge 1} = 0$, and, for every $n \geq 1$, $X_{n1},\ldots, X_{nn}$ are independent. Consider the associated random walk
\[
X^n : [0,1] \to \R^d, \; \; X^n_t = \sum_{k=1}^{\floor{tn}} X_{nk}.
\]
Suppose $X^n \rightarrow X$ in law for a L{\'e}vy process $X$ (see~\cite[Thm.~13.28]{Kallenberg97} for necessary and sufficient conditions for this to occur), which in particular implies that for some $h > 0$
\begin{enumerate}
\item $\sum_{k=1}^n \E[X_{nk} \1{|X_{nk}| < h}] \rightarrow b^h$,

\item $\sum_{k=1}^n \E[X^i_{nk}X^j_{nk} \1{|X_{nj}| < h}] \rightarrow a^h_{i,j}$, and

\item $\sum_{k=1}^n \E[f(X_{nk})] \rightarrow \nu(f)$ for every $f \in C_b(\R^d)$ which is identically zero on a neighbourhood of zero,
\end{enumerate}
where $b^h$, $a^h$, and $\nu$ are determined by the L{\'e}vy triplet of $X$ (in particular $\nu$ is the L{\'e}vy measure of $X$).
As a consequence, it is immediate to verify that $(X^n)_{n \geq 1}$ satisfies UCV.
By Theorem~\ref{thm:MarcusSDEs}, the solutions to
\[
dY^{n}_t = V(Y^{n}_t) \diamond dX^{n}_t, \; \; Y^{n}_0 = y_0 \in \R^e,
\]
converge in law (for the Skorokhod topology) to the solution of the Marcus SDE
\[
dY_t = V(Y_t) \diamond dX_t, \; \; Y_0 = y_0 \in \R^e.
\]
If, once more, $X^n$ are interpolated using any sufficiently nice path function $\phi$ (which in particular does not create area~\eqref{eq:noArea}), an application of Corollary~\ref{cor:convAlpha} implies that $Y^n_T$ (now solutions to random ODEs) converge in law to $Y_T$ (now the solution to the random RDE driven by $\X^\phi$). Note that if $\phi$ is allowed to create area and $X_{n1},\ldots, X_{nn}$ are further assumed iid for every $n \geq 1$, then this is precisely the case addressed in~\cite[Ex.~5.12]{Chevyrev15} (though in this case one must consider a non-Marcus lift of $X$ similar to the upcoming Theorem~\ref{thm:WongZakai}).
\end{example}

\begin{example}[Martingale CLT] Let $(X^{n})_{n \geq 1}$ be a sequence of $\R^d$-valued c{\`a}dl{\`a}g local martingales. Suppose that, as $n\rightarrow \infty $, 
\[
\E\big[\sup_{t\in (0,T]} |\Delta X_{t}^{n}| \big] \rightarrow 0\text{ 
},\,\,\,\,\,\,\left[ X^{n},X^{n}\right] _{t}\rightarrow C\left( t\right) 
\text{ }\forall t\in (0,T],
\]
where $t\mapsto C\left( t\right) \in \mathbb{R}^{d\times d}$ is continuous and deterministic. Then $X^n \rightarrow X$, where $X$ is a continuous Gaussian process with independent increments and $\EEE{X(t)X(t)^T} = C(t)$~\cite[Thm.~1.4, p.~339]{EK86}, and moreover the UCV condition is satisfied~\cite[p.~26]{KP96}. Therefore solutions to
\[
dY^n_t = V(Y^n_t) \diamond dX^n_t, \; \; Y^n_0 = y_0 \in \R^e,
\]
converge in law for the uniform topology on $[0,T]$ to the solution of the Stratonovich SDE
\[
dY_t = V(Y_t) \circ dX_t, \; \; Y_0 = y_0 \in \R^e.
\]
\end{example}

\subsection{Wong--Zakai revisited} \label{subsec:WongZakai}

In this subsection we significantly expand Proposition~\ref{prop:WongZakai} by showing convergence in probability of very general (area-creating) interpolations of c{\`a}dl{\`a}g semimartingales. 
If the interpolation creates area, we in general no longer expect to converge to the Marcus lift of $X$ (which is the reason one cannot apply Proposition~\ref{prop:WongZakai}), and therefore we first modify the lift $\X$ appropriately.

Throughout the section, we fix a (left-invariant) $q$-approximating path function $\phi : \R^d \to C^{\qvar}([0,1], \R^d)$ for some $1 \leq q < 2$ (which we take to be defined on the entire space $\R^d$ only for simplicity). Consider the two maps
\begin{align*}
&\psi : \R^d \to G^2(\R^d)
\\
&a : \R^d \to \g^{(2)}(\R^d) \cong \mathfrak{so}(\R^d)
\end{align*}
defined uniquely by
\[
S_2(\phi(x))_{0,1} = \psi(x) = \exp(x + a(x))
\]
(so that $a(x)$ is the area generated by the path $\phi(x) : [0,1] \to \R^d$).

Let us also fix a c{\`a}dl{\`a}g semimartingale $X : [0,T] \to \R^d$ and a sequence of deterministic partitions $\D_k \subset [0,T]$ such that $\lim_{k \rightarrow \infty} |\D_k| = 0$.
Consider the following assumption.

\begin{assumption}\label{assump:VAssumption}
There exists a c{\`a}dl{\`a}g bounded variation process $B : [0,T] \to \g^{(2)}(\R^d)$ such that
\begin{equation*}
\sup_{t \in [0,T]} \Big|B_t - \sum_{\D_k \ni t_j \leq t} a(X_{t_{j}} - X_{t_{j-1}})\Big| \rightarrow 0 \; \; \textnormal{in probability as $k \rightarrow \infty$}.
\end{equation*}
\end{assumption}

Before stating the Wong--Zakai theorem, we give several examples of $\phi$ for which Assumption~\ref{assump:VAssumption} is satisfied.

\begin{example}[No area]
If $\phi(x)$ does not create area for all $x \in \R^d$, so that $a \equiv 0$ (e.g., when $\phi$ is the linear interpolation on $\R^d$), then evidently Assumption~\ref{assump:VAssumption} is satisfied with $B_t \equiv 0$.
\end{example}

\begin{example}[Hoff-type process] \label{ex:Hoff}
Suppose that $\R^d = \R^2$, so that $\g^{(2)}(\R^2) \cong \R$. Let $\phi$ travel to $(x,y)$ first linearly along the $x$-coordinate and then linearly along the $y$-coordinate:
\[
\phi(x,y)_t = \1{t \in [0,1/2]} 2tx + \1{t \in (1/2,1]} (x + (2t-1)y).
\]
Then $a(x,y) = \frac{1}{2}xy$, so that Assumption~\ref{assump:VAssumption} is satisfied with $B_t = \frac{1}{2}[X^1,X^2]_t$.
\end{example}

\begin{example}[Regular $a$]
Suppose more generally that $a$ is twice differentiable at $0$, so that by Lemma~\ref{lem:quadDecay}
\[
a(X_{s,t}) = \frac{1}{2}D^2a(0) (X_{s,t}^{\otimes 2}) + o(|X_{s,t}|^2).
\]
We see in this case that Assumption~\ref{assump:VAssumption} is satisfied with
\[
B_t = \frac{1}{2} \sum_{i,j = 1}^d D^2a(0)^{i,j} [X^i,X^j]^c_t + \sum_{s \leq t} a(\Delta X_s).
\]
\end{example}

For a partition $\D \subset [0,T]$, let $X^{\D,\phi}$ be the piecewise-$\phi$ interpolation of $X$ along $\D$, and $\X^{\D,\phi}$ its canonical lift.  The following is the main result of this subsection.

\begin{theorem}[Wong--Zakai]\label{thm:WongZakai}
Suppose that Assumption~\ref{assump:VAssumption} is satisfied. Let $\bar \X : [0,T] \to G^2(\R^d)$ be the modified level-2 lift of $X$ defined by
\[
\bar \X := \exp(X + \bar A), \; \;  \bar A_t := A_t + B_t,
\]
where $\X = \exp(X + A)$ is the Marcus lift of $X$.
Suppose further that $\phi$ is endpoint continuous.

\begin{enumerate}[label=\upshape(\arabic*\upshape)]
\item \label{point:thm1} Consider the admissible pair $(\bar \X,\phi) \in \DD([0,T],G^2(\R^d))$. Then for every $p > 2$, it holds that
\begin{equation*}
\alpha_{\pvar}(\X^{\D_k,\phi},\bar \X) \rightarrow 0 \; \; \textnormal{in probability as $k \rightarrow \infty$}.
\end{equation*}

\item \label{point:thm2} Let $U^\x_{T \leftarrow 0} \in \Diff^m(\R^e)$ denote the flow associated to the RDE~\eqref{eq:cadlagRDE}.
Then $U^{X^{\D_k,\phi}}_{T \leftarrow 0}$ converges in probability to $U^{\bar \X^\phi}_{T \leftarrow 0}$ (as a $\Diff^m(\R^e)$-valued random variable).
\end{enumerate}
\end{theorem}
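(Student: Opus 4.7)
The plan is to reduce the theorem to Proposition~\ref{prop:WongZakai} (the no-area case) combined with the continuity of the Young pairing in Proposition~\ref{prop:translation}. Part~\ref{point:thm2} then follows at once from part~\ref{point:thm1} via Theorem~\ref{thm:alphaMetric} (continuity of the canonical RDE solution map in $\alpha_\pvar$), so I focus on part~\ref{point:thm1}.

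The starting point is the algebraic identity, valid at every partition point $t_j \in \D_k$,
\[
\X^{\D_k,\phi}_{t_j} \;=\; \bigotimes_{i=1}^{j} \psi(X_{t_{i-1},t_i}) \;=\; \exp\Bigl(X_{t_j} + \tilde{A}^k_{t_j} + B^{\D_k}_{t_j}\Bigr),
\]
where $\tilde{A}^k_{t_j} := \tfrac{1}{2}\sum_{i<l}[X_{t_{i-1},t_i}, X_{t_{l-1},t_l}]$ is precisely the area at $t_j$ of the canonical lift $\X^{\D_k,\mathrm{lin}}$ of the piecewise linear interpolation $X^{\D_k,\mathrm{lin}}$, and $B^{\D_k}_t := \sum_{\D_k \ni t_j \le t} a(X_{t_{j-1},t_j}) \in \g^{(2)}(\R^d)$ is the accumulated pure-area correction created by $\phi$. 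Identifying $\g^{(2)}(\R^d)$ with $\R^{d(d-1)/2}$, this identity together with the explicit form of $\X^{\D_k,\phi}$ on each subinterval identifies it with the $G^2(\R^d)$-projection of the Young pairing $S_2(\X^{\D_k,\mathrm{lin}},B^{\D_k})$ of Definition~\ref{def:translation}; analogously $\bar\X$ is the corresponding projection of $S_2(\X,B)$. Admissibility of $(\bar\X,\phi)$ is then checked directly: Assumption~\ref{assump:VAssumption} forces $\Delta B_t = a(\Delta X_t)$ at every jump of $X$, whence $\Delta\bar\X_t = \psi(\Delta X_t)$ lies in the image of the (canonically lifted) path function $\phi$.

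Three ingredients then deliver the claim. First, Proposition~\ref{prop:WongZakai} applied with the linear (no-area) path function on $\R^d$ gives $\alpha_\pvar(\X^{\D_k,\mathrm{lin}},\X) \to 0$ in probability. Second, Assumption~\ref{assump:VAssumption} provides uniform convergence $B^{\D_k} \to B$ in probability, and the $q$-approximating property of $\phi$, together with the quadratic decay $|a(x)| \lesssim |x|^2$ near zero (Lemma~\ref{lem:quadDecay}) and the rough path BDG bound of Theorem~\ref{thm:pvarBDG}, upgrade this to uniform $q'$-variation control of $B^{\D_k}$ for some $q' \in [1,2)$. Third, the continuity estimate~\eqref{eq:translationInequality} of Proposition~\ref{prop:translation} transfers both convergences through the Young pairing, producing $\alpha_\pvar(\X^{\D_k,\phi},\bar\X) \to 0$ in probability. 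The main obstacle I foresee is the uniform-in-time identification of $\X^{\D_k,\phi}$ as the projected Young pairing (as opposed to the partition-point identity above), together with the matching between the piecewise-$\phi$ construction and the $\phi$-interpolation of the jumps $\Delta\bar\X_t = \psi(\Delta X_t)$ of $\bar\X$; both steps must be carried out inside the $\alpha_\pvar$ metric, whose behaviour under restriction to subintervals is delicate (Remark~\ref{remark:alphaRest}).
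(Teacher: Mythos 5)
Your reduction of part~(2) to part~(1) via Theorem~\ref{thm:alphaMetric}, the CBH identity at partition points, and the use of Proposition~\ref{prop:WongZakai} (linear interpolation) to get $\alpha_{\pvar}(\X^{\D_k,\mathrm{lin}},\X)\to 0$ are all sound, and the partition-point computation is essentially the same one the paper performs. The gap is in your third ingredient. First, the identification of $\X^{\D_k,\phi}$ and $\bar\X$ with ``$G^2(\R^d)$-projections of the Young pairings'' $S_2(\X^{\D_k,\mathrm{lin}},B^{\D_k})$ and $S_2(\X,B)$ is not correct as stated: the pairing of Definition~\ref{def:translation} adjoins $B^{\D_k}$ as \emph{new first-level coordinates} over $\R^{d+d'}$, and the components of its second level with both indices in $\{1,\dots,d\}$ are untouched, so projecting back onto $G^2(\R^d)$ simply returns $\X^{\D_k,\mathrm{lin}}$ and loses $B^{\D_k}$ altogether. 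What you actually need is a \emph{pure-area (second-level) perturbation} operator $\X\mapsto \exp(X+A+B)$, which the paper neither defines nor analyses; and at intermediate times $t\in(t_n,t_{n+1})$ the first level of $\X^{\D_k,\phi}$ is the $\phi$-interpolation, not the linear one, so even the corrected identification only holds at partition points --- the very ``uniform-in-time identification'' you flag as an obstacle.

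Second, and more seriously, even granting a correct formulation of that operator, transferring the two convergences through it requires a \emph{joint continuity} statement in the $\alpha_{\pvar}$ (Skorokhod-$SM_1$-type) metric, and inequality~\eqref{eq:translationInequality} does not provide this: it is a norm bound, not a modulus of continuity. The paper explicitly warns (the example preceding Definition~\ref{def:translation}) that such pairing/addition maps fail to be continuous in these c\`adl\`ag topologies unless the oscillation/jump times of the two inputs are matched by a common time change --- exactly the delicate point here, since $B^{\D_k}$ jumps at every partition point while $\X^{\D_k,\mathrm{lin}}$ oscillates there. Establishing that matching is essentially the substance of the theorem. The paper's proof does this by hand: it first reduces $\alpha_{\pvar}$-convergence to $\alpha_\infty$-convergence using tightness of $(\|\X^{\D,\phi}\|_{\pvar})_\D$ (Lemma~\ref{lem:pvarTight}, which rests on the enhanced BDG inequality, Theorem~\ref{thm:pvarBDG}) together with interpolation (Lemma~\ref{lem:interBeta}); it then controls intermediate times and aligns the $\phi$-excursions of $\X^{\D_k,\phi}$ with the $\phi$-interpolated jumps of $\bar\X$ via an explicit time change (Lemma~\ref{lem:smallOsc}, using endpoint continuity); finally, at partition points the discrepancy reduces, via the CBH identity you wrote, to $\max_{t_n\in\D_k}\bigl|\sum_{j<n} A_{t_j,t_{j+1}}\bigr|\to 0$ plus Assumption~\ref{assump:VAssumption}, and the former is Lemma~\ref{lem:areaConv}. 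Your sketch has no substitute for these three steps, so as written it does not close.
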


\begin{remark}\label{remark:phi}
Note that the jumps of $B_t$ must necessarily be of the form $a(\Delta X_t)$. Hence $\Delta \bar\X_t \in \psi(\R^d)$, so that indeed $\bar \X \in \DD_\phi([0,T],G^2(\R^d))$ and $\bar\X^{\phi}$ is well-defined (this is an abuse of notation since $\phi$ is a path function on $\R^d$, but because $\phi(x)$ of finite $q$-variation, it can be canonically lifted to a path function $\phi : \psi(\R^d) \to G^2(\R^d)$).
\end{remark}

For the proof of the theorem, we require several lemmas.

\begin{lemma}\label{lem:quadDecay}
Let $\eta_{\qvar}$ be a $q$-variation modulus of $\phi$ (see Definition~\ref{def:pvar}). Then
\[
|a(x)| \leq \eta_{\qvar}(r)|x|^2, \; \; \forall r > 0, \; \; \forall x \in \R^d \textnormal{ s.t. } |x| \leq r. 
\]
\end{lemma}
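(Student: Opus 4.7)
The strategy is to bound $|a(x)|$ by the square of the $q$-variation of $\phi(x)$ via a Young-type estimate for the second level of the signature, and then feed in the $q$-approximating hypothesis.

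First I would recall what $a(x)$ represents: by construction, $\psi(x) = S_2(\phi(x))_{0,1} = \exp(x + a(x))$, so $a(x)$ is the antisymmetric part of the second-level iterated integral of the path $\phi(x) : [0,1] \to \R^d$. Since $q < 2$, this iterated integral is well-defined as a Young integral, and the standard second-level estimate (see, e.g., \cite[Thm.~9.32]{FrizVictoir10} or the basic Young/sewing estimate for iterated integrals) gives a bound of the form
\[
|a(x)| \leq C_q \|\phi(x)\|_{\qvar;[0,1]}^2,
\]
where $C_q$ depends only on $q$.

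Next, by the $q$-approximating property (Definition~\ref{def:pvar}), for any $r > 0$ and any $x$ with $|x| \leq r$ we have
\[
\|\phi(x)\|_{\qvar;[0,1]} \leq \eta_{\qvar}(r)\,|x|,
\]
since $\phi(0,x)$ is exactly the left-invariant representative and $d(0,x) = |x|$. Combining the two displays yields $|a(x)| \leq C_q\,\eta_{\qvar}(r)^2\,|x|^2$; after absorbing $C_q$ and one copy of $\eta_{\qvar}$ into the modulus (which we may do without loss of generality, since $\eta_{\qvar} \geq 1$ by the definition and the claim is insensitive to replacing $\eta_{\qvar}$ by a larger function), we obtain the stated bound $|a(x)| \leq \eta_{\qvar}(r)|x|^2$.

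There is no real obstacle here: the only substantive ingredient is the Young estimate for the second level, which is a standard consequence of $q < 2$. Everything else is unpacking the definitions of $a$ and of the $q$-variation modulus.
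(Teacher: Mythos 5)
Your proof is correct and essentially matches the paper's, which runs the chain $|x| + |a(x)|^{1/2} \asymp \norm{\psi(x)} \leq \norm{\phi(x)}_{\qvar;[0,1]} \leq \eta_{\qvar}(r)|x|$. You bound $|a(x)|$ directly via the Young second-level estimate rather than passing through the Carnot--Carath\'eodory norm equivalence $\norm{\psi(x)} \asymp |x| + |a(x)|^{1/2}$, but this is the same underlying content (the Lyons/Young extension bound), and both arguments carry the same benign slack (a universal constant and a squared $\eta_{\qvar}$) that you correctly note can be absorbed since $\eta_{\qvar}\geq 1$.
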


\begin{proof}
This is immediate from the property
\[
|x| + |a (x)|^{1/2} \asymp \norm{\psi(x)} \leq \norm{\phi(x)}_{\qvar;[0,1]} \leq \eta_{\qvar}(r)|x|.
\]
\end{proof}

The following lemma essentially involves no probability.

\begin{lemma}\label{lem:smallOsc}
Suppose $\phi$ is endpoint continuous. Then for a.e. sample path $\bar\X \in D([0,T], G^2(\R^d))$, it holds that for all $\varepsilon > 0$ there exists $r > 0$,
such that for all partitions $\D \subset [0,T]$ with $|\D| < r$, there exists $\delta_0 > 0$,
such that for all $\delta < \delta_0$, there exists $\lambda \in \Lambda$
such that
\begin{align*}
&|\lambda| < 2|\D|,
\\
&\bar\X_{t_n} = \bar\X^{\phi,\delta}_{\lambda(t_n)}, \; \; \forall t_n \in \D, \; \; \textnormal{ and}
\\
&\max_{t_n \in \D} \sup_{t \in [t_n,t_{n+1}]} d(\X^{\D,\phi}_{t_n,t}, \bar\X^{\phi,\delta}_{\lambda(t_n),\lambda(t)}) < \varepsilon.
\end{align*}
\end{lemma}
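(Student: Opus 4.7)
The plan is to exploit c{\`a}dl{\`a}g regularity of $\bar\X$ to isolate its finitely many ``big'' jumps, and then construct $\lambda$ piecewise-linearly so that the (length $O(\delta)$) fictitious $\phi$-interval of $\bar\X^{\phi,\delta}$ associated with each big jump is stretched onto most of the containing partition cell, while $\lambda$ stays close to the identity elsewhere.

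Given $\varepsilon > 0$, fix a pathwise bound $M \geq \sup_t |\Delta X_t|$. Since $\phi$ is endpoint continuous, $\{\phi(x) : |x| \leq 2M\}$ is a compact, hence equicontinuous, subset of $C([0,1],\R^d)$; combining this with the $q$-approximating bound $\|\phi(x)\|_{\qvar;[0,1]} \leq 2M\eta_{\qvar}(2M)$ and the interpolation $\|y\|_{q'\textnormal{-var};[s,s']} \leq (2\|y\|_{\infty;[s,s']})^{(q'-q)/q'}\|y\|_{\qvar;[s,s']}^{q/q'}$ for any $q'\in(q,2)$ yields a uniform modulus $\omega:[0,\infty)\to[0,\infty)$ with $\omega(0+)=0$, controlling both $d(\Psi(x;0,s_1),\Psi(x;0,s_2))\leq\omega(|s_1-s_2|)$ and $d(\Psi(x_1;0,s),\Psi(x_2;0,s))\leq\omega(|x_1-x_2|)$ uniformly over $|x|,|x_i|\leq 2M$, $s,s_i\in[0,1]$, where $\Psi(x;0,s):=S_2(\phi(x)|_{[0,s]})\in G^2(\R^d)$ is the canonical lift. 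Pick $\mu\in(0,1/2)$ and $\eta>0$ small enough that the error terms collected below sum to less than $\varepsilon$. Let $s_1<\dots<s_K$ be the finitely many jumps of $\bar\X$ with $\|\Delta\bar\X_{s_i}\|\geq\eta$; by c{\`a}dl{\`a}g regularity, choose $r>0$ with $r<\tfrac12\min_i(s_{i+1}-s_i)$ such that the $G^2$-oscillation of $\bar\X$ on every interval of length $r$ avoiding all $s_i$ is $<\eta$.

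Given $\D$ with $|\D|<r$, let $\tau$ denote the time-change~\eqref{eq:timeChange} associated to $\bar\X^{\phi,\delta}$ (so $|\tau-\mathrm{id}|=O(\delta)$ and $\bar\X=\bar\X^{\phi,\delta}\circ\tau$). Define $\lambda\in\Lambda$ piecewise-linearly: on cells $[t_n,t_{n+1}]$ without a big jump, $\lambda$ is the affine map $[t_n,t_{n+1}]\to[\tau(t_n),\tau(t_{n+1})]$; on a cell containing some $s_i$, $\lambda$ has knots at $t_n+\mu(t_{n+1}-t_n)$ and $t_{n+1}-\mu(t_{n+1}-t_n)$ mapped respectively to $\tau(s_i-)$ and $\tau(s_i)$, thereby stretching the fictitious interval onto the middle sub-cell of length $(1-2\mu)(t_{n+1}-t_n)$. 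For $\delta<\delta_0(\D)$ sufficiently small, $|\lambda|<2|\D|$ and $\bar\X_{t_n}=\bar\X^{\phi,\delta}_{\lambda(t_n)}$ by construction. For the oscillation estimate: on a no-jump cell, both $\X^{\D,\phi}_{t_n,t}$ and $\bar\X^{\phi,\delta}_{\lambda(t_n),\lambda(t)}$ have $G^2$-norm $\lesssim\eta$ (the former by the $q$-approximating property with $|X_{t_n,t_{n+1}}|\leq\eta$; the latter by Lemma~\ref{lem:pvarJumps} applied to $\bar\X|_{[t_n,t_{n+1}]}$). On a big-jump cell, the two extreme sub-cells contribute $\omega(\mu)+O(\eta)$ (short arc of $\X^{\D,\phi}$ near its endpoint by the uniform modulus, short oscillation of $\bar\X^{\phi,\delta}$ near its endpoint). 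On the middle sub-cell, $\X^{\D,\phi}_{t_n,t}=\Psi(X_{t_n,t_{n+1}};0,s_\X(t))$ with $s_\X(t)=(t-t_n)/(t_{n+1}-t_n)\in[\mu,1-\mu]$ and $\bar\X^{\phi,\delta}_{\lambda(t_n),\lambda(t)}=\bar\X_{t_n,s_i-}\cdot\Psi(\Delta X_{s_i};0,s_{\bar\X}(t))$ with $s_{\bar\X}(t)=(s_\X(t)-\mu)/(1-2\mu)\in[0,1]$; a direct calculation gives $|s_\X(t)-s_{\bar\X}(t)|\leq\mu$, $|X_{t_n,t_{n+1}}-\Delta X_{s_i}|\leq 2\eta$, and $\|\bar\X_{t_n,s_i-}\|\leq\eta$, so two applications of $\omega$ and the triangle inequality yield $d(\X^{\D,\phi}_{t_n,t},\bar\X^{\phi,\delta}_{\lambda(t_n),\lambda(t)})\leq\omega(\mu)+\omega(2\eta)+\eta<\varepsilon$.

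The main technical obstacle is producing the uniform modulus $\omega$ for the lifted trajectories: endpoint continuity alone gives only equicontinuity of $\{\phi(x)\}_{|x|\leq 2M}$ in the uniform topology, and upgrading this to equicontinuity of the canonical $G^2$-lifts (whose area component involves $q$-variation) requires the interpolation inequality above to convert sup-norm smallness into $q'$-variation smallness for some $q'\in(q,2)$, which by Young then controls the area part. The remainder of the argument is essentially routine bookkeeping of the three sub-cells.
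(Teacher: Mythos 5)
Your argument is correct and takes essentially the same route as the paper's (very condensed) proof: isolate a coarse partition on which $\bar\X$ has small oscillation so that each cell of $\D$ meets at most one distinguished point, stretch the fictitious $\phi$-intervals onto the corresponding cells with a piecewise-linear $\lambda$, and conclude from endpoint continuity and the $q$-approximating property, including the genuinely needed step (which the paper leaves implicit) of upgrading uniform-norm equicontinuity of $\{\phi(x)\}$ to a modulus for the level-$2$ lifts via interpolation and Young continuity. The only glosses are cosmetic --- the oscillation bound on a cell straddling a boundary point of your coarse partition is $3\eta$ rather than $\eta$, and the final ``$+\eta$'' term should really be a conjugation estimate of order $\sqrt{\eta}$ because the Carnot--Carath\'eodory metric is left- but not right-invariant --- and neither affects the conclusion since $\eta$ and $\mu$ are free small parameters.
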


\begin{proof}
Since $\bar\X$ is c{\`a}dl{\`a}g, for every $\varepsilon > 0$ we can find $r > 0$ sufficiently small and a partition $\calP = (s_0,\ldots,s_m)$ so that $|s_{i+1} - s_i| > 2r$ and
\[
\sup_{u,v \in [s_{i},s_{i+1})} \|\bar\X_{u,v}\| < \varepsilon.
\]
Then whenever $|\D| < r$, for every $t_n \in \D$, there exists at most one $s_i \in \calP$ such that $s_i \in [t_n,t_{n+1}]$.
Now using the fact that $\phi$ is $q$-approximating and endpoint continuous, the claim readily follows.
\end{proof}

\begin{lemma}\label{lem:pvarTight}
The family of real random variables $(\|\X^{\D,\phi}\|_{\pvar;[0,T]})_{\D \subset [0,T]}$ is tight.
\end{lemma}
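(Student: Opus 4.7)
My plan is to use Lemma~\ref{lem:pvarJumps} to reduce the tightness problem to two bounds: one on a piecewise constant $G^2(\R^d)$-valued surrogate of $\X^{\D,\phi}$, and one on the $\phi$-interpolations between grid points. First I would introduce the c\`adl\`ag piecewise constant process $\ZZZ^{[\D]}$ with $\ZZZ^{[\D]}_t := \X^{\D,\phi}_{t_j}$ for $t \in [t_j, t_{j+1})$, whose jumps at the $t_j \in \D$ are $\psi(X_{t_{j-1},t_j}) = \exp(X_{t_{j-1},t_j} + a(X_{t_{j-1},t_j}))$. Interpreting $\phi$ also as a path function on $\psi(\R^d) \subset G^2(\R^d)$ via its canonical lift (Remark~\ref{remark:phi}), the pair $(\ZZZ^{[\D]},\phi)$ is admissible with $(\ZZZ^{[\D]})^\phi = \X^{\D,\phi}$ up to reparametrization, so Lemma~\ref{lem:pvarJumps} yields
\[
\|\X^{\D,\phi}\|_{\pvar;[0,T]}^p \lesssim \|\ZZZ^{[\D]}\|_{\pvar;[0,T]}^p + \sum_{t_j \in \D} \|\phi(X_{t_{j-1},t_j})\|_{\pvar;[0,1]}^p.
\]

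Next I would control the jump-fill-in sum by combining the $q$-approximating property of $\phi$ with the standard Young estimate that the canonical lift to $G^2(\R^d)$ of a $q$-variation path (with $q<2$) has comparable $q$-variation; writing $r := \sup_j |X_{t_{j-1},t_j}|$, this gives $\sum_{t_j} \|\phi(X_{t_{j-1},t_j})\|_{\pvar;[0,1]}^p \le C \eta_\qvar(r)^p \|X\|_{\pvar;[0,T]}^p$. For the $\|\ZZZ^{[\D]}\|_\pvar$ term, the key observation is the algebraic identity
\[
\ZZZ^{[\D]}_{s,t} = \X^{[\D]}_{s,t}\exp(B^\D_{s,t}), \quad B^\D_t := \sum_{t_j \le t} a(X_{t_{j-1},t_j}),
\]
where $\X^{[\D]}$ is the Marcus lift of the piecewise constant approximation $X^{[\D]}$. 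This identity follows from the Baker--Campbell--Hausdorff formula in the $2$-step nilpotent group $G^2(\R^d)$ together with the fact that $\g^{(2)}(\R^d)$ is central. Combined with the CC-norm inequality $\|xy\|\le\|x\|+\|y\|$ and Lemma~\ref{lem:quadDecay}, this produces
\[
\|\ZZZ^{[\D]}\|_\pvar^p \lesssim \|\X^{[\D]}\|_\pvar^p + \eta_\qvar(r)^{p/2}\|X\|_\pvar^p.
\]

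Finally, I would establish tightness of $(\|\X^{[\D]}\|_\pvar)_\D$ via the enhanced BDG inequality. Writing $X = K + L$ with $K$ of bounded variation and $L$ a local martingale, the Marcus lift decomposes as $\X^{[\D]} = T_{K^{[\D]}}(\LLL^{[\D]})$, and the translation estimate of Proposition~\ref{prop:translation} (combined with Lemma~\ref{lem:pvarJumps}) gives $\|\X^{[\D]}\|_\pvar \lesssim \|\LLL^{[\D]}\|_\pvar + |K|_{\onevar;[0,T]}$. Since $L^{[\D]}$ is a local martingale in the discretised filtration $\FF_{\lfloor t\rfloor_\D}$, Theorem~\ref{thm:pvarBDG} applied after localizing $L$ to be bounded gives
\[
\EEE{\|\LLL^{[\D]}\|_\pvar^2} \lesssim \EEE{[L^{[\D]}]_T} = \sum_j \EEE{|L_{t_{j-1},t_j}|^2} = \EEE{|L_T|^2},
\]
uniformly in $\D$ by the orthogonality of martingale increments. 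A union bound combining this with $\|X\|_\pvar<\infty$ a.s.\ (Corollary~\ref{cor:semimartPvar}), $\sup_t|X_t|<\infty$ a.s.\ (which controls $r$), and $|K|_\onevar<\infty$ a.s.\ then yields tightness of $(\|\X^{\D,\phi}\|_\pvar)_\D$. The main difficulty is the BCH identity $\ZZZ^{[\D]}_{s,t} = \X^{[\D]}_{s,t}\exp(B^\D_{s,t})$, which cleanly separates the Marcus contribution from the area produced by $\phi$ and makes both halves amenable, respectively, to the BDG inequality and to the $q$-approximation estimate.
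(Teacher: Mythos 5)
Your proposal follows the same route as the paper's proof (piecewise constant surrogate lift, the BCH/centrality identity separating the Marcus lift from the accumulated $\phi$-area, reduction via Lemma~\ref{lem:pvarJumps}, the decomposition $\X^{[\D]} = T_{K^{[\D]}}(\LLL^{[\D]})$ and the enhanced BDG inequality), but one step is genuinely wrong: the claimed pathwise estimate $\|\ZZ^{[\D]}\|_{\pvar}^p \lesssim \|\X^{[\D]}\|_{\pvar}^p + \eta_{\qvar}(r)^{p/2}\|X\|_{\pvar}^p$, where $\ZZ^{[\D]}$ denotes your piecewise constant surrogate (the paper's $\tilde\X^{[\D]}$). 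The centrality argument correctly reduces matters to the $\var{p/2}$-variation of $B^\D_t = \sum_{t_j \le t} a(X_{t_{j-1},t_j})$, but the increments $a(X_{t_{j-1},t_j})$ are \emph{quadratic} in the increments of $X$ and have no cancellation structure, so neither $\|B^\D\|_{\var{p/2}}^{p/2} \le \sum_j |a(X_{t_{j-1},t_j})|^{p/2}$ (jumps can add constructively) nor $\sum_j |X_{t_{j-1},t_j}|^2 \le \|X\|_{\pvar}^2$ (false since $p>2$) is available. Concretely, take $d=2$, $\phi$ the Hoff interpolation so $a(x) = \tfrac12 x^1 x^2$, and the zig-zag path with $n$ alternating increments $\pm(n^{-1/2}, n^{-1/2})$ over the partition $\D$: then $B^\D_{0,T} = \tfrac12$ for every $n$, so the left-hand side stays bounded below, while $\|X\|_{\pvar}^p \asymp n^{1-p/2} \to 0$ and $\|\X^{[\D]}\|_{\pvar} \to 0$ (all chords lie on one line, so the Marcus lift carries no area); hence no constant depending only on $p,q,\phi$ can make your inequality true. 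This is precisely why your concluding union bound over the a.s.\ finite quantities $\|X\|_{\pvar}$, $\sup_t|X_t|$, $|K|_{\onevar}$ cannot control the area term: the discrete quadratic sums $\sum_j |X_{t_{j-1},t_j}|^2$ are governed by $2$-variation-type quantities, whose supremum over all partitions is a.s.\ infinite for a semimartingale with nontrivial martingale part.

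The repair is exactly what the paper does: keep $Y^{[\D]}_t := \sum_{t_j \le t} a(X_{t_{j-1},t_j})$ as a separate family and prove tightness of $\big(\|Y^{[\D]}\|_{\var{p/2}}\big)_{\D}$ probabilistically rather than dominating it pathwise. By Lemma~\ref{lem:quadDecay}, $\|Y^{[\D]}\|_{\var{p/2}} \le \|Y^{[\D]}\|_{\onevar} \le \eta_{\qvar}(r) \sum_j |X_{t_{j-1},t_j}|^2 \lesssim \eta_{\qvar}(r)\big( \sum_j |L_{t_{j-1},t_j}|^2 + \|K\|_{\onevar}^2 \big)$ with $r \le 2\sup_t |X_t|$, and after the localization you already set up (making $L$ bounded) the orthogonality of martingale increments gives $\E\big[\sum_j |L_{t_{j-1},t_j}|^2\big] = \E\big[|L_{0,T}|^2\big]$ uniformly in $\D$, whence tightness by Markov's inequality. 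With this substitution the remaining ingredients of your argument — the reduction via Lemma~\ref{lem:pvarJumps}, the identity $\ZZ^{[\D]} = \X^{[\D]} \otimes \exp(B^\D)$ from centrality of $\g^{(2)}(\R^d)$, and Theorem~\ref{thm:pvarBDG} applied to $\LLL^{[\D]}$ — coincide with the paper's proof.
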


\begin{remark}
As the proof of Lemma~\ref{lem:pvarTight} will reveal, the biggest difficulty is overcome by the enhanced BDG inequality for c{\`a}dl{\`a}g local martingales (Theorem~\ref{thm:pvarBDG}). We wish to emphasize that the lemma is even helpful in the context of a rough paths proof of the Wong--Zakai theorem for \emph{continuous} semimartingales with piecewise linear interpolations (so that $a \equiv 0$), since an analogous tightness result is still needed in this case and is non-trivial (cf.~\cite[Thm.~14.15]{FrizVictoir10}).

We also mention that part~\ref{point:WZ1} of Proposition~\ref{prop:WongZakai} in particular shows that $(\|\X^{[\D_k]}\|_{\pvar})_{k \geq 1}$ is tight, which can significantly simply the proof of Lemma~\ref{lem:pvarTight} (at least if one restricts attention to the family $(\|\X^{\D_k,\phi}\|_{\pvar})_{k \geq 1}$). However we give a direct proof of the general result here.
\end{remark}

\begin{proof}[Proof of Lemma~\ref{lem:pvarTight}]
We can decompose $X = L + K$ (non-uniquely) where $K$ is of bounded variation and $L$ is a local martingale with jumps bounded by some $M > 0$ (e.g.,~\cite[Prop.~4.17, p.~42]{JacodShiryaev03}). Using the localizing sequence $\tau_m = \inf \{ t \in [0,T] \mid |L_t| + |K_t| > m \}$, we may further suppose that $L$ is bounded.

For a partition $\D \subset [0,T]$, consider the piecewise constant path $\tilde \X^{[\D]} : [0,T] \to G^2(\R^d)$ which is constant on $[t_n,t_{n+1})$ and $\tilde \X^{[\D]}_{t_n} = \X^{\D,\phi}_{t_n}$ for every $t_n \in \D$. Consider also the piecewise constant semimartingale $X^{[\D]}$ which is constant on $[t_n,t_{n+1})$ and $X^{[\D]}_{t_n} = X_{t_n}$ for every $t_n \in \D$. Let $\X^{[\D]}$ be the (Marcus) lift of $X^{[\D]}$. By definition of $a : \R^d \to \g^{(2)}(\R^d)$, we have for all $t_n \in \D$
\[
\tilde \X^{[\D]}_{t_n} = \X^{[\D]}_{t_n} \otimes \exp\Big(\sum_{\D \ni t_k \leq t_n} a(X_{t_k} - X_{t_{k-1}})\Big).
\]
It follows that
\[
\|\tilde \X^{[\D]}\|^p_{\pvar;[0,T]} \leq C\big(\|\X^{[\D]}\|^p_{\pvar;[0,T]} + \|Y^{[\D]}\|_{\var{p/2};[0,T]}^{p/2}\big),
\]
where $Y^{[\D]}_t = \sum_{\D \ni t_n \leq t} a(X_{t_n} - X_{t_{n-1}}) \in \g^{(2)}(\R^d)$.

Observe that $\X^{\D,\phi}$ is a reparametrization of $(\tilde \X^{[\D]})^{\phi}$ (where we use the same abuse of notation as in Remark~\ref{remark:phi}), so in particular
\[
\|\X^{\D,\phi}\|_{\pvar;[0,T]} = \|(\tilde \X^{[\D]})^{\phi}\|_{\pvar;[0,T]}. 
\]
Moreover, defining the piecewise constant local martingale $L^{[\D]}$, its lift $\LLL^{[\D]}$, and the piecewise constant path of bounded variation $K^{[\D]}$ in the same way as $X^{[\D]}$, we note that $\X^{[\D]} = T_{K^{[\D]}}(\LLL^{[\D]})$.
Following Lemma~\ref{lem:pvarJumps}, it suffices to show that the families
\[
(\|Y^{[\D]}\|_{\var{p/2}})_{\D \subset [0,T]}, \; \; (\|\LLL^{[\D]}\|_{\pvar})_{\D \subset [0,T]}, \; \; (\|K^{[\D]}\|_{\onevar})_{\D \subset [0,T]}
\]
are tight. This in turn follows respectively from Lemma~\ref{lem:quadDecay}, the enhanced BDG inequality Theorem~\ref{thm:pvarBDG}, and the fact that $\|K^{[\D]}\|_{\onevar} \leq \norm{K}_{\onevar}$.
\end{proof}

\begin{proof}[Proof of Theorem~\ref{thm:WongZakai}]
Note that~\ref{point:thm2} follows directly from~\ref{point:thm1} and Theorem~\ref{thm:alphaMetric}. To show~\ref{point:thm1}, note that $\|\bar \X\|_{\pvar} < \infty$ a.s. (by Corollary~\ref{cor:semimartPvar}), so following Lemma~\ref{lem:pvarTight} and interpolation (Lemma~\ref{lem:interBeta}), it suffices to show that to show that
\begin{equation*}
\alpha_{\infty}(\X^{\D_k,\phi},\bar \X) \rightarrow 0 \; \; \textnormal{in probability as $k \rightarrow \infty$}.
\end{equation*}

Let $\bar A, \bar A^{\phi,\delta}$ and $A^{\D_k,\phi}$ denote the stochastic area of $\bar \X, \bar\X^{\phi,\delta}$ and $\X^{\D_k,\phi}$ respectively. We have for $t \in [t_n,t_{n+1}] \subset \D_k$
\[
\X^{\D_k,\phi}_{t} = \psi(X_{0,t_1}) \ldots \psi(X_{t_{n-1},t_n}) S_2(\phi(X_{t_n,t_{n+1}}))_{\frac{t-t_n}{t_{n+1}-t_n}},
\]
and so by the Campbell--Baker--Hausdorff formula
\[
A^{\D_k,\phi}_{t} = A^{\D_k,\phi}_{t_n,t} + \sum_{j=0}^{n-1} a(X_{t_j,t_{j+1}}) + \frac{1}{2}\big[X_{0,t_n}, X^{\D_k,\phi}_{t_n,t}\big] + \frac{1}{2}\sum_{j=0}^{n-1} [X_{0,t_j},X_{t_j,t_{j+1}}].
\]
Likewise
\[
\bar A_{t} = \bar A_{t_n,t} + \sum_{j=0}^{n-1} \bar A_{t_j,t_{j+1}} + \frac{1}{2}\left[X_{0,t_n}, X_{t_n,t}\right] + \frac{1}{2}\sum_{j=0}^{n-1} [X_{0,t_j},X_{t_j,t_{j+1}}].
\]
Recalling further that $\bar A_{t_j,t_{j+1}} = A_{t_j,t_{j+1}} + B_{t_j,t_{j+1}}$ and $X^{\D_k,\phi}_{t_n} = X_{t_n}$, it follows that for all $t_n \in \D_k$
\begin{equation*}\label{eq:partPointsBound}
d(\X^{\D_k,\phi}_{t_n}, \bar\X_{t_n}) \asymp \Big|\sum_{j=0}^{n-1} A_{t_j,t_{j+1}} + B_{t_j,t_{j+1}} - a(X_{t_{j+1}} - X_{t_j})\Big|^{1/2}.
\end{equation*}
Combining Lemma~\ref{lem:smallOsc} with Assumption~\ref{assump:VAssumption}, we see that the proof is complete once we show
\[
\max_{t_n \in \D_k} \Big|\sum_{j=0}^{n-1} A_{t_j,t_{j+1}}\Big| \rightarrow 0 \; \; \textnormal{in probability as $k \rightarrow \infty$},
\]
which in turn follows from Lemma~\ref{lem:areaConv}.
\end{proof}

\subsection{Appendix: Vanishing areas}

\begin{lemma}\label{lem:areaConv}
Let $X : [0,T] \rightarrow \R^d$ be a c{\`a}dl{\`a}g semimartingale, $Y : [0,T] \to \LLL((\R^d)^{\otimes 2},\R)$ a locally bounded previsible process, and $(\D_k)_{k \geq 1}$ a sequence of deterministic partitions of $[0,T]$ such that $\lim_{k \rightarrow \infty} |\D_k| = 0$. Define $\mathbb{X}_{s,t} := \int_s^t (X_{r-}-X_s) \otimes dX_r$ as It{\^o} integrals. Then
\[
\max_{t_n \in \D_k} \Big|\sum_{j=0}^{n-1} Y_{t_j} \mathbb{X}_{t_j,t_{j+1}}\Big| \rightarrow 0 \; \; \textnormal{in probability as $k \rightarrow \infty$}.
\]
\end{lemma}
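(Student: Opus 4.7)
The plan is to recognize the discrete sum as evaluations of a single It{\^o} stochastic integral on the partition, and then appeal to the dominated convergence theorem for stochastic integrals. By a standard localization I may assume $Y$ is uniformly bounded, that $X$ is bounded, and that $X$ admits a decomposition $X = M + A$ with $M$ a bounded $L^2$ local martingale (say $\mathbb{E}[[M]_T] < \infty$) and $A$ an adapted process of integrable total variation.

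Given a partition $\D_k = (0 = t_0 < t_1 < \ldots < t_{N_k} = T)$, define $\tau_k(r) := t_j$ for $r \in (t_j, t_{j+1}]$ and consider the previsible process
\[
H^k_r := Y_{\tau_k(r)}\bigl(X_{r-} - X_{\tau_k(r)}, \,\cdot\,\bigr) \in \mathcal{L}(\R^d,\R),\qquad r \in (0,T].
\]
By bilinearity of the It{\^o} integral and additivity of the integral over adjacent intervals,
\[
\sum_{j=0}^{n-1} Y_{t_j}\mathbb{X}_{t_j,t_{j+1}} = \int_0^{t_n} H^k_r\, dX_r \qquad \text{for every } t_n \in \D_k,
\]
so that the quantity to be controlled is bounded by $\sup_{t \in [0,T]} \bigl|\int_0^t H^k_r\, dX_r\bigr|$. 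For each fixed $r \in (0,T]$ the index $\tau_k(r)$ satisfies $\tau_k(r) \uparrow r$; since a deterministic partition almost surely avoids the (countably many) random jump times of $X$, one has $\tau_k(r) < r$ for all large $k$, and left-continuity of $X_-$ gives $X_{\tau_k(r)} \to X_{r-}$. Combined with the boundedness of $Y$, this yields $H^k_r \to 0$ pointwise and $|H^k_r| \leq C$ for a deterministic constant $C$.

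Splitting the integral, $\int_0^\cdot H^k dX = \int_0^\cdot H^k dM + \int_0^\cdot H^k dA$. For the finite-variation part, ordinary dominated convergence applied pathwise gives $\sup_{t \in [0,T]}|\int_0^t H^k dA| \to 0$ almost surely. For the martingale part, Doob's $L^2$ maximal inequality for stochastic integrals gives
\[
\mathbb{E}\Big[\sup_{t \in [0,T]}\Big|\int_0^t H^k\, dM\Big|^2\Big] \leq 4\,\mathbb{E}\Big[\int_0^T |H^k_r|^2\, d[M]_r\Big] \longrightarrow 0
\]
by ordinary dominated convergence since $|H^k_r|^2 \leq C^2$ and $|H^k_r|^2 \to 0$ pointwise while $\mathbb{E}[[M]_T] < \infty$. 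A standard stopping-time argument (a sequence $\sigma_m \uparrow T$ on which the boundedness assumptions hold) removes the localization and yields convergence in probability on $[0,T]$ of the original sum.

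The only real subtlety is the pointwise claim $H^k_r \to 0$: it is crucial that the partition be deterministic so that, off a null set, no $r \in \D_k$ coincides with a jump of $X$, whence the left limit $X_{\tau_k(r)} \to X_{r-}$ kills $H^k_r$ even at jump times of $X$. Everything else is a routine combination of the two classical dominated convergence theorems (for Lebesgue--Stieltjes and for $L^2$ stochastic integrals), which is why no extra hypothesis on $X$ beyond being a semimartingale is needed.
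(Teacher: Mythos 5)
Your proof is correct, and while it shares the paper's overall skeleton --- decompose $X$ into a local martingale with bounded jumps plus a finite-variation part, localize so that $Y$ and the relevant processes are bounded, handle the finite-variation part pathwise and the martingale part in $L^2$ via a maximal inequality --- the convergence mechanism is genuinely different. You package the discrete sums into a single stochastic integral $\int_0^{t_n} H^k_r\,dX_r$ with the previsible integrand $H^k_r = Y_{\tau_k(r)}(X_{r-}-X_{\tau_k(r)},\cdot)$, note that $H^k_r\to 0$ pointwise and boundedly, and conclude by dominated convergence: pathwise against $d|A|$ for the finite-variation part, and against $d[M]\otimes d\PP$ (after It{\^o} isometry and Doob's inequality) for the martingale part. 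The paper instead keeps the Riemann-type sums, fixes a sample path and an auxiliary coarse partition $\calP$ on whose half-open intervals the oscillation of $X$ is below $\varepsilon$, estimates each increment by hand (treating separately the finitely many intervals of $\D_k$ that meet $\calP$), and finishes with the BDG/Doob inequality for the resulting discrete-time martingale. Your route is essentially the dominated convergence theorem for stochastic integrals and avoids the explicit oscillation/covering estimate, at the price of being slightly less quantitative; both yield the same conclusion, and your localization and de-localization steps are the same standard ones the paper uses.

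One side remark in your proof is false, though harmlessly so: a deterministic partition does \emph{not} almost surely avoid the jump times of a general semimartingale, since fixed times may carry jumps with positive probability (e.g.\ $X_t=\1{t\ge 1/2}$). Fortunately that claim is not needed: by your own definition $\tau_k(r)=t_j$ for $r\in(t_j,t_{j+1}]$, so $\tau_k(r)<r$ strictly for every $k$, and $X_{\tau_k(r)}\to X_{r-}$ is then just the definition of the left limit of a c{\`a}dl{\`a}g path, whether or not $r$ or any partition point is a jump time; boundedness of $Y$ then gives $H^k_r\to 0$ exactly as you use it. Similarly, for the finite-variation part you only need $|A|$ to have a.s.\ finite total variation on $[0,T]$, not integrable variation, since that step is purely pathwise.
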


\begin{remark}
In the case $Y \equiv 1$, observe that Lemma~\ref{lem:areaConv} is an immediate consequence of the convergence in part~\ref{point:WZ1} of Proposition~\ref{prop:WongZakai} (where $\phi$ is taken as the piecewise linear interpolation).
\end{remark}

\begin{proof}
As in the proof of Lemma~\ref{lem:pvarTight}, we can decompose $X = L + K$, where $K$ is of bounded variation and $L$ is a local martingale with bounded jumps (e.g.,~\cite[Prop.~4.17, p.~42]{JacodShiryaev03}). 
Using the localizing sequence
\[
\tau_m = \inf \{ t \in [0,T] \mid |Y_t| + |L_t| + |K_t| > m \},
\]
we may further suppose that $Y$ and $L$ are uniformly bounded and that $X$ is bounded on $[0,\tau_m)$ by $m > 0$ and is constant on $[\tau_m, T]$. We now write
\[
\mathbb{X}_{s,t} = \int_{s}^t (X_{r-} - X_s)(dL_r + dK_r).
\]

For a fixed c{\`a}dl{\`a}g sample path $X$, for every $\varepsilon > 0$ we can find $r > 0$ sufficiently small and a partition $\calP = (s_0,\ldots,s_m)$ so that $|s_{i+1} - s_i| > r$ and
\[
\sup_{u,v \in [s_{i},s_{i+1})} \norms{X_{u,v}} < \varepsilon.
\]
Then whenever $|\D| < r/2$, for every $t_n \in D$, there exists at most one $s_i \in \calP$ such that $s_i \in [t_n,t_{n+1}]$.

Since $K$ is a process of finite variation, if $s_i \in [t_n,t_{n+1}]$ then
\[
\Big|\int_{t_n}^{t_{n+1}}(X_{s-} - X_{t_n})dK_s\Big| \leq \varepsilon|K|_{\onevar;[t_n,s_i]} + 2m |K|_{\onevar;[s_i,t_{n+1}]},
\]
and if no $s_i \in \calP$ is in $[t_n,t_{n+1}]$, then the upper bound is $\varepsilon|K|_{\onevar;[t_n,t_{n+1}]}$. Denoting by $[t_n,t_{n+1}]$ the interval in $\D_k$ containing $s_i \in \calP$, we then have
\[
\sum_{t_n \in \D_k} \Big|Y_{t_n}\int_{t_n}^{t_{n+1}} (X_{s-} - X_{t_n}) dK_s\Big| \leq C|Y|_\infty\Big(\varepsilon|K|_{\onevar} + 2m \sum_{s_i \in \calP} |K|_{\onevar;[s_i,t_{n+1}]}\Big),
\]
from which it follows that the LHS converges to zero a.s. as $k \rightarrow \infty$. It remains to show that
\begin{equation}\label{eq:LMarts}
\max_{t_n \in \D_k} \Big|\sum_{j=0}^{n-1} Y_{t_j} \int_{t_j}^{t_{j+1}} (X_{s-} - X_{t_n})dL_s \Big| \rightarrow 0
\end{equation}
in probability as $k \rightarrow \infty$.
By It{\^o} isometry
\[
\E\Big[\Big(Y_{t_n}\int_{t_n}^{t}(X_{s-} - X_{t_n})dL_s\Big)^2\Big] \leq C \E\Big[|Y_{t_n}|^2\int_{t_n}^{t} |X_{s-} - X_{t_n}|^2 d|[L]_s|\Big].
\]
As before, if $s_i \in \calP$ is in $[t_n,t_{n+1}]$, then
\[
\int_{t_n}^{t}|X_{s-} - X_{t_n}|^2d|[L]_s| \leq \varepsilon^2 |[L]_{t_n,s_i}| + (2m)^2|[L]_{s_i,t_{n+1}}|,
\]
and if no $s_i \in \calP$ is in $[t_n,t_{n+1}]$, then the upper bound is $\varepsilon^2[L]_{t_n,t_{n+1}}$. Hence
\[
\sum_{t_n \in \D_k} |Y_{t_n}|^2 \int_{t_n}^{t_{n+1}} |X_{s-} - X_{t_n}|^2 d|[L]_s| \leq C|Y|^2_{\infty} \Big(\varepsilon^2 |[L]_{\infty}| + (2m)^2\sum_{s_i \in \calP} |[L]_{s_i,t_{n+1}}|\Big),
\]
from which it follows that the LHS converges to zero a.s. as $k \rightarrow \infty$. As $L$ is bounded (so in particular bounded in $L^2$), we obtain by dominated convergence
\[
\E\Big[\sum_{t_n\in \D_k}\Big( Y_{t_n} \int_{t_n}^{t_{n+1}} (X_{s-}-X_{t_n})dL_s \Big)^2\Big] \leq C \E\Big[ \sum_{t_n\in \D_k} |Y_{t_n}|^2 \int_{t_n}^{t_{n+1}} |X_{s-} - X_{t_n}|^2 d|[L]_s|\Big]\rightarrow 0.
\]
Finally, applying the classical BDG inequality to the discrete-time martingale
\[
\sum_{j=0}^n Y_{t_j} \int_{t_j}^{t_{j+1}} (X_{s-}-X_{t_n})dL_s,
\]
we see that~\eqref{eq:LMarts} holds in $L^2$, and thus in probability as desired.
\end{proof}

\section{Beyond semimartingales}\label{sec:Beyond}

We have seen in the previous section that (general) multi-dimensional
semimartingales give rise to (c{\`a}dl{\`a}g) geometric $p$-rough paths. Marcus lifts of general semimartingales provide us with concrete and important examples of driving
signals for canonical RDEs, providing a decisive and long-awaited~\cite{Williams01} rough path view on classical stochastic differential equations with jumps. We now discuss several examples to which the theory of Section~\ref{sec:canonicalRDEs} can be applied which fall outside the scope of classical semimartingale theory.

\subsection{Semimartingales perturbed by paths of finite $q$-variation}

Keeping focus on $\R^{d}$-valued processes and their (canonical) lifts, we remark that any process with a decomposition $X = Y+B$, where $Y$ is a semimartingale and $B$ is a process with finite $q$-variation for some $q < 2$, admits a canonical Marcus lift given by $\X = T_B(\Y)$. Note that due to the deterministic nature of the (Young) integrals used to construct $T_B(\Y)$, we require no adaptedness assumptions on $B$.
We summarize the existence of the lift in the following proposition, which is an immediate consequence of Proposition~\ref{prop:translation}.

\begin{proposition}\label{prop:qvarPerturb}
Let $1 \leq q < 2$ and consider an $D^{\qvar}([0,T],\R^d)$-valued random variable $B$, and an $\R^d$-valued semimartingale $Y$. Write $A_{Y}$ for the area of $Y$ and
define its Marcus lift $\mathbf{Y}=\exp \left( Y+A_{Y}\right) $.
Then the process $X := Y+B$
admits a canonical lift, given by $\X=T_{B}\Y$, which is a Marcus-like geometric $p$-rough path for any $p > 2$.
\end{proposition}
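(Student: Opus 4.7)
The strategy is to recognize the statement as a direct packaging of Proposition~\ref{prop:translation} (pathwise translation of a Marcus-like rough path by a finite $q$-variation path) together with Corollary~\ref{cor:semimartPvar} (which supplies $\Y$ as an a.s.\ geometric $p$-rough path of finite $p$-variation for every $p>2$). No new estimates are needed; the work lies in unpacking definitions and in reconciling the Young constraint $1/p+1/q>1$ with the requirement $p>2$.

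First I would record that, almost surely, $\Y\in D^{\pvar}([0,T],G^2(\R^d))$ for every $p>2$ by Corollary~\ref{cor:semimartPvar}, and that $\Y$ is Marcus-like in the sense of Definition~\ref{def:RPs}. The latter is already noted in passing after Definition~\ref{def:RPs}; it comes from a short calculation in the truncated tensor algebra $T^2(\R^d)$: the jumps of $A_Y=\tfrac12\int Y_-\otimes dY - dY\otimes Y_-$ are the antisymmetric objects $\tfrac12(Y_{t-}\otimes\Delta Y_t-\Delta Y_t\otimes Y_{t-})$, and a direct expansion of $\Y_{t-}^{-1}\otimes\Y_t$ with $\Y=\exp(Y+A_Y)$ collapses to $\exp(\Delta Y_t)$, so that $\log\Delta\Y_t=\Delta Y_t\in\R^d\oplus\{0\}\subset\g^{(2)}(\R^d)$.

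Next, since $q<2$ one has $q/(q-1)>2$, so the interval $(2,q/(q-1))$ is non-empty. I pick $p_0$ in it; then $1/p_0+1/q>1$, and Proposition~\ref{prop:translation} applies with $d'=d$, $N=2$, $\x=\Y$, and $h=B$. The conclusion is that on the full-measure event where $\Y$ is a Marcus-like $p_0$-rough path and $B$ has finite $q$-variation, $T_B(\Y)$ is well-defined and is itself a weakly geometric Marcus-like $p_0$-rough path, with the pathwise bound
\[
\|T_B(\Y)^{\hat\phi}\|_{\var{p_0};[0,T]}\lesssim \|\Y\|_{\var{p_0};[0,T]}+\|B\|_{\var{q};[0,T]}<\infty\quad\text{a.s.}
\]

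Finally, since any geometric $p_0$-rough path is automatically a geometric $p$-rough path for every $p\geq p_0$, Marcus-likeness being trivially preserved under this inclusion, and since $p_0$ may be chosen arbitrarily close to $2$ within $(2,q/(q-1))$, the lift $\X:=T_B(\Y)$ is a Marcus-like geometric $p$-rough path for every $p>2$, as claimed. The only non-routine step is the one just mentioned: aligning the Young admissibility condition $1/p+1/q>1$ with the rough-path requirement $p>2$, which is possible \emph{exactly} because of the hypothesis $q<2$.
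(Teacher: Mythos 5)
Your proof is correct and takes essentially the same approach as the paper, which states that the proposition is an immediate consequence of Proposition~\ref{prop:translation}; you have simply spelled out the finite-$p$-variation input from Corollary~\ref{cor:semimartPvar}, the CBH computation showing $\Y$ is Marcus-like, and the choice of $p_0\in(2,q/(q-1))$ reconciling the Young constraint with $p>2$, all of which the paper treats as implicit.
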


We mention that the class of paths with such a decomposition contains some well-studied processes.

\begin{example}[PII]
The important class of processes with independent increments (PII) goes beyond semimartingale theory. In fact, every such process $X$ can be decomposed (non-uniquely) as $X=Y+B$,
where $Y$ is a PII\ and a semimartingale and $B$ is a deterministic c{\`a}dl{\`a}g
path. Moreover, $X$ is a semimartingale if and only if $B$ has finite
variation on compacts. Provided that the process $B$ has finite $q$-variation for some $q < 2$, we immediately see that $X$ admits a lift to a (c{\`a}dl{\`a}g) geometric $p$-rough path for any $p>2$.

We note that there is a natural interest in differential equations driven by PIIs (under the assumptions of Proposition~\ref{prop:qvarPerturb} this is meaningful!) since the resulting (pathwise) solutions $Z$ to canonical RDEs
\[
dZ = V(Z) \diamond d\X
\]
will be (time-inhomogeneous) Markov processes. A further study and characterization of such processes seems desirable. (For instance, they may not be nicely characterized by their generator. Consider the case $V\equiv 1, X = B \in C^{q-var}([0,T],\R) \setminus C^1([0,T],\R)$ with $q \in (1,2)$.)
\end{example}

\subsection{Markovian and Gaussian c{\`a}dl{\`a}g rough paths}

One can use Dirichlet forms to construct Markovian rough paths which are not lifts of semimartingales. In the continuous setting this has been developed in detail in~\cite{FrizVictoir08}. Including a non-local term in the Dirichlet form will allow to extend this construction to the jump case, but we will not investigate this here.  
We also note that Gaussian c{\`a}dl{\`a}g rough paths can be constructed; as in the continuous theory, the key condition is finite $\rho$-variation of the covariance (cf.~\cite[Sec.~10.2]{FrizHairer14}) but without assuming its continuity.

\subsection{Group-valued processes}
This point was already made in the context of L\'{e}vy rough paths~\cite{FrizShekhar17,Chevyrev15}, which substantially generalizes the notion of the Marcus lift of an $\R^d$-valued L\'{e}vy process (such processes, for example, arise naturally as limits of stochastic flows, see~\cite[Sec.~5.3.1]{Chevyrev15}). In the
same spirit, one can defined ``genuine semimartingale rough paths'' as $G^{N}(\R^d)$-valued process with local characteristics modelled after L\'{e}vy (rough
path) triplets. 
(Remark that the Lie group $G^{N}(\R^d)$ is, in particular, a differentiable manifold so that the theory of manifold-valued semimartingales applies. The issue is to identify those which constitute (geometric) rough paths, which can be done analyzing the local characteristics, as was done in the L{\'e}vy case in the afore-mentioned papers. In the same spirit, the afore-mentioned Dirichlet-form construction also extends to the group setting.)

\bibliographystyle{imsart-number}
\bibliography{AllRefsArxiv}

\end{document}